\documentclass{article}

\usepackage[english]{babel}

\usepackage[letterpaper,top=2cm,bottom=2cm,left=3cm,right=3cm,marginparwidth=1.75cm]{geometry}

\usepackage{amsmath}
\usepackage{graphicx}
\usepackage[colorlinks=true, allcolors=blue]{hyperref}
\usepackage{float}
\usepackage{multirow}

\usepackage[english]{babel}
\usepackage{amsmath,amsfonts,amssymb,amsthm,mathtools}
\usepackage{bm,bbm}
\usepackage{mathrsfs}
\usepackage{empheq}
\usepackage{enumerate}

\newtheorem{theorem}{Theorem}
\newtheorem{proposition}{Proposition}
\newtheorem{lemma}{Lemma}
\newtheorem{corollary}{Corollary}

\newcommand{\Z}{{\mathbb{Z}}}
\newcommand{\N}{\mathbb{N}}
\newcommand{\R}{\mathbb{R}}
\newcommand{\C}{\mathbb{C}}

\bibliographystyle{plain}

\title{Deconvolution of spherical data corrupted with unknown noise}

\author{Jérémie Capitao-Miniconi, Elisabeth Gassiat \\
 Universit{\'e} Paris-Saclay, CNRS, Laboratoire de Math{\'e}matiques d'Orsay,\\  91405 Orsay, France.}

\date{}

\begin{document}
\maketitle

\begin{abstract}
We consider the deconvolution problem for densities supported on a $(d-1)$-dimensional sphere with unknown center and unknown radius, in the situation where the distribution of the noise is unknown and without any other observations. We propose estimators of the radius, of the center, and of the density of the signal on the sphere that are proved consistent without further information. The estimator of the radius is proved to have almost parametric convergence rate for any dimension $d$. When $d=2$, the estimator of the density is proved to achieve the same rate of convergence over Sobolev regularity classes of densities as when the noise distribution is known.
\end{abstract}

\section{Introduction}

In this paper, we study the deconvolution problem of random data on a sphere corrupted by independent additive noise. The observations are
\begin{equation}
\label{eq:model}
Y_{i}=X_{i}+\varepsilon_{i}, \;i=1,\ldots,n
\end{equation}
where $(X_i)_{i\geq 1}$ (the signal)  is a sequence of independent identically distributed (i.i.d.) random variables taking values on a $(d-1)$-dimensional sphere (for $d \geq 2$) with unknown center $C^{\star}$ and unknown radius $R^{\star}$, $(\varepsilon_{i})_{i\geq 1}$ (the  noise) is a sequence of i.i.d. random variables independent of the signal and with totally unknown distribution. The distribution of the signal is also unknown, it is only known that it is spherically supported. To solve the deconvolution problem and estimate the structural parameters $C^{\star}$ and $R^{\star}$, the only assumption we shall put on the noise is that its $d$ coordinates are independently distributed.\\ 

The statistical estimation of the center and of the radius of the sphere is of interest in various applications
such as object tracking, robotics, pattern recognition, see for instance \cite{EF18}, \cite{EF20}, \cite{Pan11}, among others, see also \cite{BC86} and references therein. Several methods have been proposed based on least squares, maximum likelihood, see \cite{Hypersphere2021}
for a recent likelihood based algorithm, most of them modeling the noise distribution with a Gaussian distribution.\\  

The deconvolution problem of the distribution of the signal when the radius and the center are known is studied for circular signals (that is when $d=2$) in \cite{Go2002}. The author proves that the minimax  rate of convergence of the estimator over  a wide collection of smoothness classes of the density of the signal on the circle  does not depend on the (known) noise distribution, for a variety of different noise distributions, contrasting with the situation where the signal has a density with respect to Lebesgue over the whole space.\\

Recently, it has been proved in \cite{LCGL2021} that deconvolution  with unknown noise distribution is possible for multivariate signals,  as soon as the signal can be decomposed in two components that satisfy a mild dependence assumption, that its distribution has light enough tails, and without any assumption on the noise distribution except that its two corresponding components are independently distributed. The authors of \cite{LCGL2021} then consider the situation where the probability of $X_1$ has a density with respect to Lebesgue measure, and they prove that not knowing the noise distribution does not deteriorate the estimation rate of the density on Sobolev regularity classes for compactly supported signals.\\

Here, the probability distribution of the signal is singular with respect to Lebesgue measure on $\R^d$ and their convergence results do not apply. However, we prove that the general conditions they propose under which deconvolution with unknown noise is possible is satisfied for spherical signals, this is our first main identifiability result Theorem \ref{theo:ident}.
The main contribution of our work is then to exhibit estimators that achieve remarkable properties:
\begin{itemize}
\item
We propose estimators of the radius, the center, and the distribution of the signal, which are proved consistent whatever the noise distribution, see Proposition \ref{prop:consi}.
\item
Under the mild assumption that the noise has finite variance, we prove that the radius of the sphere can be estimated at almost parametric rate with totally unknown noise distribution, see Theorem \ref{theo:radius}.
\item
When $d=2$, that is for circular signals, we prove that the center can be estimated at almost parametric rate and that the density of the signal distribution on the circle can be estimated at the same rate as when the distribution of the noise  is known on some Sobolev regularity classes, with a rate which is minimax as proved in \cite{Go2002}, see Theorem \ref{theo:density} and Theorem \ref{theo:center}.
\end{itemize}

In Section \ref{sec:ident}, we first recall general results of \cite{LCGL2021} and we prove in Proposition \ref{prop:phihat} a strengthened version of the local $L^2$-consistency of the general estimator of the characteristic function of the signal that will be a basic stone for all our convergence rates theorems. We then state our identifiability theorem, give the definition of the estimators and prove their consistency. Section \ref{sec:rates} studies the rates of convergence of our estimators, and in section \ref{sec:semipara} we study the situation where radius and center of the sphere together with the noise distribution are unknown, though the distribution of the angles of the random signal is known. Simulations illustrating our findings are given in Section \ref{sec:simu}. We discuss possible further work and related questions in Section \ref{sec:discu}.   Proofs of  propositions and lemmas are detailed in Section \ref{sec:proofs}.

\section{Identifiability and estimation method}
\label{sec:ident}

In this section, we prove that model \eqref{eq:model} is identifiable with no more assumptions.  We then explain the estimation method and define the estimators which will be studied in Section \ref{sec:rates}.

\subsection{Preliminaries: deconvolution with unknown noise}
\label{subsec:prelim}
We first recall general results in \cite{LCGL2021}. Then, we prove a proposition
which will be used to obtain the nearly parametric rate of our estimators of the radius and the center. 
 In \cite{LCGL2021}, the authors consider the situation where the observations $Y_i \in \R^d$ come from the model
 $$
 \begin{pmatrix} Y^{(1)}\\Y^{(2)}\end{pmatrix} = \begin{pmatrix} X^{(1)}\\X^{(2)}\end{pmatrix} + \begin{pmatrix} \varepsilon^{(1)}\\ \varepsilon^{(2)}\end{pmatrix}
 $$
in which $Y^{(1)}\in\R^{d_1}$,  $d_1 \geq 1$, and $Y^{(2)}\in\R^{d_2}$,  $d_2 \geq 1$, with $d_1+d_2 = d$, and where $\varepsilon^{(1)}$ is independent of $\varepsilon^{(2)}$. They prove identifiability under  very mild assumptions on the signal distribution. The first one is about the tail of its distribution.
\begin{itemize}
\item[A($\rho$)]
There exists $\rho <2$, $a>0$ and $b>0$ such that for all $\lambda\in\R^d$,
$E\left[\exp \left(\lambda^\top X\right)\right]
\leq a \exp \left( b \|\lambda\|_{2}^\rho\right)$.
\end{itemize}
Here, $X=\begin{pmatrix} X^{(1)}\\X^{(2)}\end{pmatrix}$ and  $\|\cdot\|_{2}$ is the Euclidian norm.\\
Under A($\rho$),
the characteristic function of the signal can be extended into a multivariate analytic function denoted by
\begin{eqnarray*}
\Phi: \C^{d_1}\times \C^{d_2} &\longrightarrow& \C\\
(z_1,z_2)&\longmapsto& E\left[ \exp \left(iz_1^\top X^{(1)} + i z_2^\top X^{(2)}\right)\right].
\end{eqnarray*}
The second assumption is a mild dependence assumption (see the discussion after Theorem 2.1 in \cite{LCGL2021}).
\begin{itemize}
\item[A(dep)]
For any $z_{0}\in \C^{d_1}$, 
$z \mapsto \Phi (z_{0},z)$
is not the null function and for any $z_{0}\in \C^{d_2}$, 
$z \mapsto \Phi (z,z_{0})$
is not the null function.
\end{itemize}
Obviously, if no centering constraint is put on the signal or on the noise, it is possible to translate the signal by a fixed vector $m\in\R^d$ and the noise by  $-m$ without changing the observation. The model can thus  be identifiable only up to translation. 

\begin{theorem}[from \cite{LCGL2021}]
\label{theo:prelim}
If the distribution of the signal satisfies A($\rho$) and
A(dep), then the distribution of the signal and the distribution of the noise can be recovered from the distribution of the observation, up to  translation.
\end{theorem}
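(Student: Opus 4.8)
The plan is to work entirely with (extended) characteristic functions. Write $\Psi_Y,\psi_1,\psi_2$ for the characteristic functions of $Y,\varepsilon^{(1)},\varepsilon^{(2)}$; since $\varepsilon^{(1)}\perp\varepsilon^{(2)}$ and $\varepsilon\perp X$, for all $(t_1,t_2)\in\R^{d_1}\times\R^{d_2}$ we have $\Psi_Y(t_1,t_2)=\Phi(t_1,t_2)\psi_1(t_1)\psi_2(t_2)$. It suffices to show that if $(\tilde X,\tilde\varepsilon)$ is another signal/noise pair producing the same law of $Y$, with $\tilde X$ also satisfying A($\rho$) and A(dep) and $\tilde\varepsilon^{(1)}\perp\tilde\varepsilon^{(2)}$, then $\tilde X\stackrel{d}{=}X-m$ and $\tilde\varepsilon\stackrel{d}{=}\varepsilon+m$ for some $m=(m_1,m_2)\in\R^d$. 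Denoting by $\tilde\Phi,\tilde\psi_1,\tilde\psi_2$ the corresponding characteristic functions, the one identity to exploit is $\Phi\psi_1\psi_2=\tilde\Phi\tilde\psi_1\tilde\psi_2$ on $\R^d$.

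First, analyticity. Under A($\rho$), $E[\exp(\lambda^\top X)]<\infty$ for every $\lambda\in\R^d$, so $\Phi$ extends to an entire function on $\C^d$ with $|\Phi(z)|\le a\exp(b\|\mathrm{Im}\,z\|_2^\rho)$, hence of order at most $\rho<2$; the same holds for $\tilde\Phi$, and $\Phi(0)=\tilde\Phi(0)=1$, while $\Phi,\tilde\Phi,\psi_1,\psi_2,\tilde\psi_1,\tilde\psi_2$ are nonvanishing on a real neighbourhood $V$ of $0$ by continuity. On $V$ the identity rearranges as $\Phi/\tilde\Phi=(\tilde\psi_1/\psi_1)(\tilde\psi_2/\psi_2)$, a product of a function of $t_1$ alone and a function of $t_2$ alone that both equal $1$ at $0$; evaluating at $t_2=0$ and at $t_1=0$ identifies these factors as $\Phi(t_1,0)/\tilde\Phi(t_1,0)$ and $\Phi(0,t_2)/\tilde\Phi(0,t_2)$, so $\Phi(t_1,t_2)\tilde\Phi(t_1,0)\tilde\Phi(0,t_2)=\tilde\Phi(t_1,t_2)\Phi(t_1,0)\Phi(0,t_2)$ on $V$. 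Both sides are entire, so this holds on $\C^d$, and with the meromorphic functions $h_1(z_1):=\Phi(z_1,0)/\tilde\Phi(z_1,0)$, $h_2(z_2):=\Phi(0,z_2)/\tilde\Phi(0,z_2)$ (each $=1$ at $0$) we get $\Phi=\tilde\Phi\,h_1\,h_2$ on $\C^d$.

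The crux — and the step I expect to be the main obstacle — is to show $h_1,h_2$ are exponentials of linear forms, and this is where A(dep) is essential. If $h_1$ had a pole at some $z_1^0$, then for $\tilde\Phi\,h_1\,h_2$ to agree with the entire function $\Phi$ near $\{z_1^0\}\times\C^{d_2}$ the slice $z_2\mapsto\tilde\Phi(z_1^0,z_2)$ would have to vanish identically, contradicting A(dep) for $\tilde X$; hence $h_1$, and symmetrically $h_2$, have no poles. Rewriting the identity as $\tilde\Phi=\Phi\,h_1^{-1}h_2^{-1}$ and arguing the same way with A(dep) for $X$ shows $h_1,h_2$ have no zeros. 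Thus $h_1,h_2$ are entire, zero-free and of finite order $<2$, so by the Hadamard factorisation theorem (reduced to the one-variable statement by restriction to complex lines, which rules out any quadratic term) each is the exponential of an affine function; since $h_j(0)=1$ this forces $h_j(z_j)=\exp(\gamma_j^\top z_j)$ for some $\gamma_j\in\C^{d_j}$. This is exactly the point that fails without a dependence hypothesis: for $X^{(1)}\perp X^{(2)}$ the law of $Y$ only determines the two marginal deconvolution problems, which are not identifiable.

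It remains to pin down $\gamma_1,\gamma_2$. Evaluating $\Phi=\tilde\Phi\,h_1\,h_2$ at $(-i\lambda_1,0)$ with $\lambda_1\in\R^{d_1}$ gives $E[\exp(\lambda_1^\top X^{(1)})]=E[\exp(\lambda_1^\top\tilde X^{(1)})]\exp(-i\gamma_1^\top\lambda_1)$; both expectations are finite (by A($\rho$)) and positive, so $\exp(-i\gamma_1^\top\lambda_1)>0$ for all real $\lambda_1$, i.e. $\gamma_1=-im_1$ with $m_1\in\R^{d_1}$, and likewise $\gamma_2=-im_2$. Then $\Phi(t_1,t_2)=\tilde\Phi(t_1,t_2)\exp(-i(m_1^\top t_1+m_2^\top t_2))$ is the characteristic function of $\tilde X+m$, whence $X\stackrel{d}{=}\tilde X+m$; and from $\Psi_{Y^{(1)}}(t_1)=\Phi(t_1,0)\psi_1(t_1)=\tilde\Phi(t_1,0)\tilde\psi_1(t_1)$ (and its analogue for the second block) one gets $\psi_j(t_j)=\tilde\psi_j(t_j)\exp(im_j^\top t_j)$, the characteristic function of $\tilde\varepsilon^{(j)}-m_j$, so $\varepsilon\stackrel{d}{=}\tilde\varepsilon-m$. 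This is exactly the asserted recovery up to translation.
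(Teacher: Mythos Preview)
The paper does not give its own proof of this statement; it is quoted verbatim from \cite{LCGL2021}. So there is nothing in the paper to compare against directly. That said, your argument is essentially correct and is the expected one: the key identity
\[
\Phi(t_1,t_2)\,\tilde\Phi(t_1,0)\,\tilde\Phi(0,t_2)=\tilde\Phi(t_1,t_2)\,\Phi(t_1,0)\,\Phi(0,t_2)
\]
is precisely the relation underlying the contrast $M(\phi;\nu\mid\Phi)$ that the paper imports from \cite{LCGL2021}, and your use of A(dep) to rule out poles/zeros of $h_1,h_2$ is the crucial step.

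Two small points are worth tightening. First, the assertion that $h_1,h_2$ are ``of finite order $<2$'' is not immediate: a ratio of two entire functions of order $<2$ need not have order $<2$, since you lack a lower bound on the denominator. The clean way around this is to skip the order claim for $h_j$ altogether. Once you know $h_1$ is entire and nonvanishing, $\Phi(\cdot,0)$ and $\tilde\Phi(\cdot,0)$ have exactly the same zeros with the same multiplicities; applying Hadamard's factorisation to each of them separately (both have order $\le\rho<2$, hence genus $\le 1$) shows their canonical products agree, so $h_1$ is the ratio of the exponential factors, i.e.\ $h_1=e^{P-Q}$ with $\deg(P-Q)\le 1$. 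Your restriction-to-lines idea then handles $d_1>1$ correctly. Second, there are harmless sign slips in the last paragraph: $\tilde\Phi(t)\exp(-im^\top t)$ is the characteristic function of $\tilde X-m$, not $\tilde X+m$, and similarly for the noise; the ``up to translation'' conclusion is unaffected.
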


The first step in the estimation procedure is the estimation of the characteristic function of the signal by a method inspired by the proof of the identifiability theorem. 
For any $S>0$, let $\Upsilon_{\rho,S}$ be the subset of multivariate analytic functions from $\C^{d}$ to $\C$ defined as follows.
$$
\Upsilon_{\rho,S} = \left\{
\phi \text{ analytic } \text{s.t. } \forall z\in\R^{d},\overline{\phi(z)}=\phi(-z), \phi(0) = 1 \text{ and } \forall j \in \N^d \setminus \{0\}, \left| \frac{\partial^{j} \phi(0)}{\prod_{a=1}^d j_a!}\right| \leq \frac{S^{\|j\|_1}}{\|j\|_1^{ \|j\|_1 /\rho}} \right\}
$$
where $\|j\|_1 =\sum_{a=1}^{d} i_{a}$. For all $\Phi$ satisfying A($\rho$), there exists $S$ such that $\Phi \in
\Upsilon_{\rho,S}$. Let $\Phi_{\varepsilon^{(p)}}$ be the characteristic function of $\varepsilon^{(p)}$, $p=1,2$, and define for all $\phi\in \Upsilon_{\rho,S}$ and any $\nu >0$,
\begin{equation*}
M(\phi; \nu \vert \Phi) = \int_{B_{\nu}^{d_1}\times B_{\nu}^{d_2}} | \phi(t_1,t_2) \Phi(t_1,0) \Phi(0,t_2) - \Phi(t_1,t_2) \phi(t_1,0) \phi(0,t_2) |^2 
|\Phi_{\varepsilon^{(1)}}(t_1)\Phi_{\varepsilon^{(2)}}(t_2)  |^2 d t_1 d t_2,
\end{equation*}
where $B_{\nu}=[-\nu,\nu]$. It is proved in \cite{LCGL2021} that if $\phi\in \Upsilon_{\rho,S}$ satisfies A(dep), 
$M(\phi; \nu \vert \Phi)=0$ for a fixed $\nu$ if and only if $\phi=\Phi$ (up to translation). The estimator of the characteristic function of the signal can then be defined as a minimizer of the empirical estimator of $M$.

Fix some $\nu_{\text{est}}>0$.
Let $\cal H$ be a subset of functions from $\R^d$ to $\C^d$ such that all elements of $\cal H$ satisfy A(dep) and which is closed in $L^{2}(B_{\nu_{\text{est}}}^d)$. Define $\widehat \phi_{n}$ as a (up to $1/n$) measurable minimizer of the functional $M_n$ over $\Upsilon_{\rho,S}\cap {\cal H}$, where $M_n$ is defined as
\begin{equation*}
M_{n}(\phi) = \int_{B_{\nu_{\text{est}}}^{d_1}\times B_{\nu_{\text{est}}}^{d_2}} | \phi(t_1,t_2) \tilde\phi_n(t_1,0) \tilde\phi_n(0,t_2) - \tilde \phi_n(t_1,t_2) \phi(t_1,0) \phi(0,t_2) |^2 d t_1 d t_2 ,
\end{equation*}
where for all $(t_1,t_2)\in\C^{d_1}\times \C^{d_2}$,
\begin{equation*}
\tilde \phi_n(t_1,t_2) = \frac{1}{n}\sum_{\ell=1}^{n} \exp\left\{it_1^\top Y_{\ell}^{(1)} + it_2^\top Y_{\ell}^{(2)}\right\}.
\end{equation*}

It appears that, for any $\nu >0$, $\widehat \phi_{n}$ is a consistent estimator of $\Phi$ in
$L^{2}([-\nu,\nu]^d)$ at almost parametric rate. The constants will depend on the signal through $\rho$ and $S$, and on the noise through its second moment and the following quantity:
\begin{equation}
\label{eq:cnu}
c_{\nu}=\inf\{|\Phi_{\varepsilon^{(1)}}(t)|,\;t\in B_{\nu}^{d_1}\} \wedge 
\inf\{|\Phi_{\varepsilon^{(2)}}(t)|,\;t\in B_{\nu}^{d_2}\}.
\end{equation}
For any noise distribution, for small enough $\nu$, $c_{\nu}$ is a positive real number.
We prove the following.
\begin{proposition}
\label{prop:phihat}
Assume $\Phi\in\Upsilon_{\rho,S}\cap {\cal H}$ and  $\varepsilon_1$ has finite variance.
Fix some $\nu\in[(d+4/3) e/S,\nu_\text{est}]$ such that $c_{\nu}>0$.
For all $\delta >0$, 
there exist positive constants $c_{1}, c_{2}$, $c_{3}$ which depend on $\delta$, $\nu_\text{est}$, $\nu$, $c_{\nu}$,$\rho$,  $S$, $\cal H$, $d$ and $E(\|Y_{1}\|^2)$ such that for all $x \geq 1$ and $n\geq (1\vee xc_{1})/c_{2}$,
with probability at least $1 - e^{-x}$,
$$
\int_{B_{\nu}^{d}} |\widehat \phi_{n} (t)- \Phi(t)|^{2} dt \leq c_{3}\left(\frac{x}{n^{1-\delta}} \vee \frac{x^2}{n^{2-2\delta}} \right).
$$
\end{proposition}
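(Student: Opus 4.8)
The plan is to control the excess functional $M_n(\widehat\phi_n) - M_n(\Phi)$, which is $\leq 1/n$ by definition of $\widehat\phi_n$ as a near-minimizer, and then to relate this to the target quantity $\int_{B_\nu^d} |\widehat\phi_n - \Phi|^2$ via two ingredients: (i) a quantitative identifiability/curvature lower bound showing that $M(\phi;\nu_{\text{est}}\mid\Phi)$ is bounded below by a constant times $\int_{B_\nu^d}|\phi-\Phi|^2$ (possibly up to a power, or after handling the translation ambiguity by fixing a representative in $\cal H$), and (ii) a uniform concentration bound $\sup_{\phi\in\Upsilon_{\rho,S}\cap\cal H} |M_n(\phi) - M(\phi;\nu_{\text{est}}\mid\Phi)|$ that is small with high probability. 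The term $|\Phi_{\varepsilon^{(1)}}(t_1)\Phi_{\varepsilon^{(2)}}(t_2)|^2$ appearing in $M$ but not in $M_n$ is exactly what is bounded below by $c_\nu^4$ on $B_\nu^{d_1}\times B_\nu^{d_2}$, which is why $c_\nu$ enters the constants.

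First I would establish the deterministic comparison inequality. Writing $F(\phi)(t_1,t_2) = \phi(t_1,t_2)\Phi(t_1,0)\Phi(0,t_2) - \Phi(t_1,t_2)\phi(t_1,0)\phi(0,t_2)$ and its empirical counterpart $F_n(\phi)$, one has $M_n(\phi) = \|F_n(\phi)\|^2_{L^2(B_{\nu_{\text{est}}}^{2})}$ and $M(\phi;\nu_{\text{est}}\mid\Phi) = \|F(\phi)\cdot\Phi_{\varepsilon^{(1)}}\Phi_{\varepsilon^{(2)}}\|^2$. Since $\tilde\phi_n$ is an unbiased estimator of $\Phi\cdot\Phi_\varepsilon$ evaluated at real arguments, $F_n(\phi)$ concentrates around $F(\phi)\cdot\Phi_{\varepsilon^{(1)}}\Phi_{\varepsilon^{(2)}}$; here I would use the already-proved almost-parametric $L^2$-consistency of $\tilde\phi_n$ together with the fact that $\Upsilon_{\rho,S}$ is a compact set of analytic functions with uniformly controlled derivatives (this is where the hypothesis $\nu\geq(d+4/3)e/S$ is used, to ensure the Taylor tail bounds converge and give uniform $L^\infty$ and Lipschitz control on $B_{\nu_{\text{est}}}^d$, hence a chaining/bracketing argument over $\Upsilon_{\rho,S}\cap\cal H$). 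This yields, with probability $\geq 1-e^{-x}$, a bound $\sup_\phi|M_n(\phi)-M(\phi;\nu_{\text{est}}\mid\Phi)| \lesssim (x/n^{1-\delta})\vee(x^2/n^{2-2\delta})$, the $\delta$ absorbing polylog factors coming from the metric entropy of the analytic class and from sub-exponential (rather than sub-Gaussian) tails of the complex exponentials of $Y_\ell$.

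Combining: on the good event, $M(\widehat\phi_n;\nu_{\text{est}}\mid\Phi) \leq M_n(\widehat\phi_n) + \text{(sup bound)} \leq M_n(\Phi) + 1/n + \text{(sup bound)} \leq M(\Phi;\nu_{\text{est}}\mid\Phi) + 2\,\text{(sup bound)} + 1/n = 2\,\text{(sup bound)} + 1/n$, since $M(\Phi;\nu_{\text{est}}\mid\Phi)=0$. Then the curvature lower bound from step (i) converts this into the stated bound on $\int_{B_\nu^d}|\widehat\phi_n-\Phi|^2$, up to adjusting $c_3$ and the constants. I expect the main obstacle to be step (i), the quantitative identifiability: the qualitative statement ``$M(\phi;\nu\mid\Phi)=0 \iff \phi=\Phi$'' is given in \cite{LCGL2021}, but turning it into a lower bound $M(\phi;\nu_{\text{est}}\mid\Phi)\geq \kappa\int_{B_\nu^d}|\phi-\Phi|^2$ uniform over $\Upsilon_{\rho,S}\cap\cal H$ requires exploiting analyticity (a function in $\Upsilon_{\rho,S}$ that is small on $B_\nu^d$ is small on all of $B_{\nu_{\text{est}}}^d$ by analytic continuation with effective constants) together with compactness of the class, and care with the translation indeterminacy — presumably $\cal H$ is chosen precisely so that the translation ambiguity is pinned down and the minimizer over $\Upsilon_{\rho,S}\cap\cal H$ is the genuine $\Phi$. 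The second obstacle is getting the entropy/concentration in step (ii) clean enough that only the harmless $n^\delta$ loss appears; this is where finite variance of $\varepsilon_1$ (equivalently $E\|Y_1\|^2<\infty$) is needed to control the fluctuations of $\tilde\phi_n$ and its derivatives in $t$.
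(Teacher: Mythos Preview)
Your approach has two genuine gaps that would prevent reaching the claimed rate. First, the quadratic curvature bound $M(\phi;\nu_{\text{est}}\mid\Phi)\geq \kappa\int_{B_\nu^d}|\phi-\Phi|^2$ you aim for in step~(i) is \emph{not available}: the paper's own Discussion section states explicitly that the lower bound for $M$ coming from \cite{LCGL2021} ``requires delicate arguments involving a technical truncation from which it is not possible to derive a quadratic lower bound.'' What one actually has (equation \eqref{eq:minoM}) is only $M(\Phi+h;\nu\mid\Phi)\geq c_\nu^4\|h\|_{2,\nu}^{2+2\epsilon(\|h\|_{2,\nu})}$ with $\epsilon(u)=b/\log\log(1/u)$; this sub-quadratic exponent is precisely the origin of the $n^{-\delta}$ loss, not entropy or tail effects as you guessed. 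Second, a crude uniform bound $\sup_\phi|M_n(\phi)-M(\phi;\nu_{\text{est}}\mid\Phi)|$ is only of order $n^{-1/2}$ (the cross term between $F(\phi)\Phi_\varepsilon$ and the fluctuation $Z_n/\sqrt n$ dominates for generic $\phi$), not $n^{-1+\delta}$. Plugging these two true bounds into your scheme yields at best $\|\widehat h\|_{2,\nu}^{2+2\epsilon}\lesssim n^{-1/2}$, i.e.\ $\|\widehat h\|_{2,\nu}^2\lesssim n^{-1/2+o(1)}$, missing the target rate by a square root.

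The paper closes this gap by replacing your uniform concentration with a \emph{localized} oscillation bound (equation \eqref{eq:majooscil}):
\[
\bigl|(M_n(\Phi+h)-M(\Phi+h;\nu_{\text{est}}\mid\Phi))-(M_n(\Phi)-M(\Phi;\nu_{\text{est}}\mid\Phi))\bigr|
\lesssim \frac{\|Z_n\|_{\infty,\nu_{\text{est}}}}{\sqrt n}\,\|h\|_{2,\nu_{\text{est}}}^{1-\epsilon(\|h\|_{2,\nu_{\text{est}}})},
\]
and then runs a peeling argument over dyadic shells $\{2^j\leq r_n\|\widehat h\|_{2,\nu}\leq 2^{j+1}\}$. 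On each shell the sub-quadratic curvature $\|h\|^{2+2\epsilon}$ is balanced against the localized fluctuation $n^{-1/2}\|h\|^{1-\epsilon}$, giving $\|h\|^{1+3\epsilon}\lesssim \|Z_n\|_\infty/\sqrt n$; choosing $r_n=n^{1/(2(1+c(\eta)))}$ and summing the shell probabilities via the deviation inequality for $\|Z_n\|_\infty$ then yields the almost-parametric rate. The extra factor $\|h\|^{1-\epsilon}$ in the localized bound is exactly what converts the naive $n^{-1/2}$ into $n^{-1+\delta}$, and your uniform-sup approach throws this away.
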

Proposition \ref{prop:phihat} improves on Proposition A.3 in \cite{LCGL2021} and is proved in Section \ref{subsec:phihat}.

\subsection{Identifiability theorem}
\label{subsec:ident}

For any $Z\in \R^{d}$, denote $Z^{(1)}, \ldots , Z^{(d)}$ its $d$ coordinates. We shall parametrize a vector on a sphere through angles. For any $u\in[0,1]^{d-1}$, define $S(u)$ on the unit $d$-dimensional sphere as 
$$
S(u)=\begin{pmatrix} \cos(2 \pi u^{(1)}) \\ \sin(2 \pi u^{(1)}) \cos(\pi u^{(2)}) \\ \sin(2 \pi u^{(1)}) \sin( \pi u^{(2)}) \cos( \pi u^{(3)}) \\ \vdots \\ \sin(2 \pi u^{(1)}) \sin(\pi u^{(2)}) \cdots \sin(\pi u^{(d-2)}) \cos(\pi u^{(d-1)}) \\ \sin(2 \pi u^{(1)}) \sin(\pi u^{(2)}) \cdots \sin(\pi u^{(d-2)}) \sin(\pi u^{(d-1)})  \end{pmatrix}.
$$

Then for a sequence $(U_{i})_{i\geq 1}$ of i.i.d random vectors taking values in $[0,1]^{d-1}$, we have for all $i\geq 1$,
\begin{equation}
\label{eq:sphere}
X_{i}=C^{\star} + R^{\star}S(U_{i}),
\end{equation}
with $C^{\star}$ the center of the sphere and $R^{\star}$ its radius.

We shall also make the following assumptions.
\begin{itemize}
\item[(H1)]
The coordinates  $\varepsilon_{1}^{(1)},\ldots,\varepsilon_{1}^{(d)}$ of the  noise are independently distributed. We denote $\mathbb{Q}^{\star} =  \otimes_{j=1}^d \mathbb{Q}^{\star}_j$ the distribution of $\varepsilon_{1}$, with $\mathbb{Q}_j^{\star}$ the distribution of $\varepsilon_{1}^{(j)}$, $j=1,\ldots,d$.
\item[(H2)]
The distribution of $U_{1}$ has a density $f^{\star}$ with respect to Lebesgue measure on $[0,1]^{d-1}$. When $d>2$, we assume $f^{\star}$ positive on $(0,\gamma)^{d-1}$ for some $\gamma >0$.
\end{itemize}
We shall sometimes call $f^{\star}$ {\it exploration density of the angles} or {\it exploration density}.
For any $\mathbb{Q} =  \otimes_{j=1}^d \mathbb{Q}_j$, with $\mathbb{Q}_j$, $j=1,\ldots,d$, probability distributions on $\R$, any probability density $f$ on $[0,1]^{d-1}$, any $C\in \R^d$ and any $R\geq 0$, let $\mathbb{P}_{C,R,f,\mathbb{Q}}$ be the distribution of $Y_{1}$ when $X_{1}$ lies on the sphere with center $C$, radius $R$, and  $U_1$ has density $f$.

\begin{theorem}
\label{theo:ident}
Assume (H1) and (H2). For any $\mathbb{Q} =  \otimes_{j=1}^d \mathbb{Q}_j$, any probability density $f$ on $[0,1]^{d-1}$, any $C\in \R^d$ and any $R\geq 0$, $\mathbb{P}_{C,R,f,\mathbb{Q}}=\mathbb{P}_{C^{\star},R^{\star},f^{\star},\mathbb{Q}^{\star}}$ if and only if
$R=R^{\star}$, $f=f^{\star}$, and there exists $m\in\R^{d}$ such that $C=C^{\star}+m$ and $\mathbb{Q}(\cdot)=\mathbb{Q}^{\star}(\cdot +m)$.
If moreover $\mathbb{Q}$ and $\mathbb{Q}^{\star}$ have finite first moment and are centered distributions, then $m=0$, that is $C=C^{\star}$ and $\mathbb{Q}=\mathbb{Q}^{\star}$.
\end{theorem}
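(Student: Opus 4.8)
The plan is to deduce identifiability up to translation from the general result Theorem~\ref{theo:prelim}, and then to kill the translation using the geometric rigidity of spheres. The ``if'' direction is immediate: if $R=R^\star$, $f=f^\star$, $C=C^\star+m$ and $\mathbb Q(\cdot)=\mathbb Q^\star(\cdot+m)$, then a noise $\varepsilon\sim\mathbb Q$ has the law of $\varepsilon^\star-m$ with $\varepsilon^\star\sim\mathbb Q^\star$, so $Y_1=C+R^\star S(U_1)+\varepsilon_1$ has the same law as $C^\star+R^\star S(U_1)+\varepsilon_1^\star$, which is $\mathbb P_{C^\star,R^\star,f^\star,\mathbb Q^\star}$.

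For the converse, fix $(C,R,f,\mathbb Q)$ with $\mathbb P_{C,R,f,\mathbb Q}=\mathbb P_{C^\star,R^\star,f^\star,\mathbb Q^\star}$, and assume $R^\star>0$ (if $R^\star=0$ the signal is a point mass and $f^\star$ plays no role). First I would check that the spherical signal $C+RS(U)$, $U\sim f$, satisfies A($\rho$) and A(dep) for the block split $d_1=d-1$, $d_2=1$. A($\rho$) holds with $\rho=1$ since the signal is bounded by $\|C\|_2+R$, so its characteristic function extends to an entire function lying in some $\Upsilon_{1,S}$. For A(dep), if $z\mapsto\Phi(z,z_0)$ or $z\mapsto\Phi(z_0,z)$ vanished identically, then differentiating at $0$ and using boundedness of $X$ together with the Weierstrass theorem would force the conditional characteristic function of one block given the other to vanish a.s. Conditioning on the block of $d-1$ coordinates, the fibre of that projection on the sphere is, off a Lebesgue-null set, the two-point orbit of the reflection $u\mapsto(1-u^{(1)},\dots,1-u^{(d-1)})$ of $S$, which keeps the first $d-1$ coordinates of $S(u)$ and flips the last; the identity then reads $f(u)e^{iz_0R\beta(u)}+f(u')e^{-iz_0R\beta(u)}=0$ a.e., with $\beta$ a non-constant real-analytic function; taking moduli forces $\{f>0\}$ to be symmetric under the reflection and, on it, $e^{2i\,\mathrm{Re}(z_0)R\beta}\equiv-1$, impossible on a positive-measure set, so $f\equiv0$, a contradiction. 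The complementary A(dep) condition is symmetric when $d=2$; for $d>2$ it involves conditioning on the one-dimensional block, whose fibre is positive-dimensional, and there one uses the positivity of the exploration density in (H2). Theorem~\ref{theo:prelim} then yields $m\in\R^d$ with $\mathrm{Law}(C+RS(U))=\mathrm{Law}(C^\star+m+R^\star S(U^\star))$, $U^\star\sim f^\star$, and $\mathbb Q(\cdot)=\mathbb Q^\star(\cdot+m)$.

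Next I would remove the translation. The common law above is supported both on $\{x:\|x-C\|_2=R\}$ and on $\{x:\|x-(C^\star+m)\|_2=R^\star\}$; since $f$ and $f^\star$ are densities and $S$ is a local diffeomorphism off a Lebesgue-null subset of $[0,1]^{d-1}$, this support has positive $(d-1)$-dimensional Hausdorff measure, whereas two \emph{distinct} spheres of $\R^d$ meet in a set of dimension at most $d-2$ (their intersection lies in the hyperplane obtained by subtracting the two sphere equations); hence the two spheres coincide, giving $R=R^\star$ and $C=C^\star+m$. Then $S(U)\stackrel{d}{=}S(U^\star)$, and since $S$ is injective with non-vanishing Jacobian on $(0,1)^{d-1}$ up to a Lebesgue-null set, which $f$ and $f^\star$ ignore, the change-of-variables formula gives $f=f^\star$ a.e. Finally, if $\mathbb Q$ and $\mathbb Q^\star$ are centred with finite first moment, integrating $\mathbb Q(\cdot)=\mathbb Q^\star(\cdot+m)$ gives $0=\int x\,d\mathbb Q(x)=\int(y-m)\,d\mathbb Q^\star(y)=-m$, so $m=0$, i.e.\ $C=C^\star$ and $\mathbb Q=\mathbb Q^\star$.

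The step I expect to be the main obstacle is the verification of A(dep): one must turn the purely geometric constraint among the coordinates of a sphere point into an analytic non-degeneracy property of the extended characteristic function. The reflection symmetries of the parametrisation $S$ make this transparent whenever the ``small'' block is one-dimensional, because the relevant fibres are then pairs of points; the remaining subcase $d>2$ with a positive-dimensional fibre, handled through assumption (H2), is where the genuine work lies.
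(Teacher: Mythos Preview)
Your proposal is correct and follows essentially the same strategy as the paper: verify A($\rho$) from boundedness, check A(dep) through the two-point fibre (conditioning on the $(d{-}1)$-block) and the positive-dimensional fibre (conditioning on the one-dimensional block, invoking (H2) when $d>2$), apply Theorem~\ref{theo:prelim}, and finish with sphere rigidity. The only differences are cosmetic: you take the block split $(d_1,d_2)=(d-1,1)$ where the paper uses $(1,d-1)$, and your Hausdorff-dimension argument for the translation step is more explicit than the paper's one-sentence remark that a translation of a spherical signal changes only the center, not the radius or the exploration density.
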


\noindent
{\bf Proof of Theorem \ref{theo:ident}.}\\
We shall apply Theorem \ref{theo:prelim}. For any probability density $f$ on $[0,1]^{d-1}$, $C\in \R^d$ and $R\geq 0$, A($\rho$) holds with $\rho =1$ and with the constants $a=1$ and $b=\|C\|_{2}+R$. To verify A(dep), since all coordinates of the noise are independently distributed, we first  choose a decomposition of the signal in two components. We define 
$\widetilde{X}^{(1)}=X_{1}^{(1)}$, and $\widetilde{X}^{(2)}=(X_{1}^{(2)},\ldots,X_{1}^{(d)})^T$, and
we prove in Section \ref{subsec:condi} the following Lemma from which A(dep) follows.
\begin{lemma}
\label{lem:condi}
Assume (H2). Then for all 
$z\in\C$, $E\left[\exp \left(iz \widetilde{X}^{(1)}\right) \vert \widetilde{X}^{(2)}\right]$ is not $P_{\widetilde{X}^{(2)}}$-a.s. the null random variable, and for all \
$z\in\C^{d-1}$, $E\left[\exp\left( iz^T  \widetilde{X}^{(2)}\right) \vert \widetilde{X}^{(1)}\right]$ is not $P_{\widetilde{X}^{(1)}}$-a.s. the null random variable. Here, $P_{\widetilde{X}^{(p)}}$ denotes the distribution of ${\widetilde{X}^{(p)}}$, $p=1,2$.
\end{lemma}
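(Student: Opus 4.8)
The plan is to prove the two assertions of Lemma~\ref{lem:condi} in turn, treating the second one differently for $d=2$ and for $d>2$; granting the lemma, A(dep) follows in the standard way, since e.g.\ $z\mapsto\Phi(z,z_0)$ being the null function would make the bounded complex measure $E[\exp(iz_0^\top\widetilde X^{(2)})\mid\widetilde X^{(1)}=\cdot]\,dP_{\widetilde X^{(1)}}$ have vanishing Fourier transform, hence be zero, contradicting the second assertion at $z=z_0$ (and symmetrically for $\Phi(z_0,z)$). Write $\bar C=(C^{\star(2)},\ldots,C^{\star(d)})$; we may assume $R^\star>0$, the case $R^\star=0$ being trivial.

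For the first assertion, and for the second one when $d=2$, I would reduce everything to a single elementary fact, exploiting that one component is binary conditionally on the other. Since $X_1$ lies on the sphere of radius $R^\star$ centred at $C^\star$, we have $(\widetilde X^{(1)}-C^{\star(1)})^2+\|\widetilde X^{(2)}-\bar C\|_2^2=(R^\star)^2$; hence $|\widetilde X^{(1)}-C^{\star(1)}|$ equals the $\widetilde X^{(2)}$-measurable function $\rho(\widetilde X^{(2)}):=((R^\star)^2-\|\widetilde X^{(2)}-\bar C\|_2^2)^{1/2}$, and also equals $R^\star|\cos(2\pi U^{(1)})|$. Thus, conditionally on $\widetilde X^{(2)}$, $\widetilde X^{(1)}$ is supported on the two points $C^{\star(1)}\pm\rho(\widetilde X^{(2)})$. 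When $d=2$ the situation is symmetric: $|\widetilde X^{(2)}-C^{\star(2)}|=R^\star|\sin(2\pi U^{(1)})|=((R^\star)^2-(\widetilde X^{(1)}-C^{\star(1)})^2)^{1/2}$ is $\widetilde X^{(1)}$-measurable, and conditionally on $\widetilde X^{(1)}$ the variable $\widetilde X^{(2)}$ is supported on $C^{\star(2)}\pm R^\star|\sin(2\pi U^{(1)})|$. In both cases it suffices to check the following: if, conditionally on a variable $Z$, a real variable $W$ is supported on $\{\mu+r(Z),\,\mu-r(Z)\}$ for a constant $\mu$ and some $r(Z)\ge0$ whose law is non-atomic, then for every $z\in\C$ the random variable $E[\exp(izW)\mid Z]$ is not $P_Z$-a.s.\ zero. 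This is immediate: writing $p=P(W=\mu+r(Z)\mid Z)$ we have $E[\exp(izW)\mid Z]=e^{iz\mu}(p\,e^{izr(Z)}+(1-p)e^{-izr(Z)})$, which equals $e^{iz\mu}\ne0$ on $\{r(Z)=0\}$, while on $\{r(Z)>0\}$ it vanishes only if $p\in(0,1)$ and $e^{2izr(Z)}=-(1-p)/p$, forcing (with $z=\alpha+i\beta$) $\alpha\ne0$ and $2\alpha\,r(Z)\in\pi+2\pi\Z$, an event of probability zero by non-atomicity; hence $E[\exp(izW)\mid Z]\ne0$ a.s. The non-atomicity is clear, $U^{(1)}$ having a density on $[0,1]$ by (H2) while $u\mapsto R^\star|\cos(2\pi u)|$, resp.\ $u\mapsto R^\star|\sin(2\pi u)|$, is piecewise strictly monotone.

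The case $d>2$ of the second assertion is, I expect, the real obstacle: conditionally on $\widetilde X^{(1)}$ the variable $\widetilde X^{(2)}$ is now spread over the full sphere of radius $R^\star|\sin(2\pi U^{(1)})|$ around $\bar C$, there is no binary structure, and a priori its conditional characteristic function could vanish. My plan is to examine this conditional law in the degenerate regime where that sphere shrinks to a point, and to use the corner positivity of $f^\star$ from (H2) to ensure this regime has positive probability. Writing $\widetilde X^{(2)}=\bar C+R^\star\sin(2\pi U^{(1)})\,T(U^{(2)},\ldots,U^{(d-1)})$, where $T\colon[0,1]^{d-2}\to\R^{d-1}$ takes values on the unit $(d-2)$-sphere (so $\|T(v)\|_2=1$), and fixing $x_1$ with $c=(x_1-C^{\star(1)})/R^\star$, $s=(1-c^2)^{1/2}$, $\tau=\tfrac{1}{2\pi}\arccos(c)\in(0,\tfrac12)$, a disintegration of $f^\star$ along the (two-to-one, equal-Jacobian) map $u\mapsto R^\star\cos(2\pi u)$, with branches $\tau$ and $1-\tau$, yields
\begin{equation*}
E[\exp(iz^\top\widetilde X^{(2)})\mid\widetilde X^{(1)}=x_1]=e^{iz^\top\bar C}\cdot\frac{\int_{[0,1]^{d-2}}(f^\star(\tau,v)\,e^{iR^\star s\,z^\top T(v)}+f^\star(1-\tau,v)\,e^{-iR^\star s\,z^\top T(v)})\,dv}{\int_{[0,1]^{d-2}}(f^\star(\tau,v)+f^\star(1-\tau,v))\,dv}.
\end{equation*}
Using $|z^\top T(v)|\le\|z\|_1$ and $|e^{w}-1|\le|w|e^{|w|}$, the fraction differs from $1$ by at most $R^\star\|z\|_1e^{R^\star\|z\|_1}s$ as soon as the denominator is positive. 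By (H2) we may take $\gamma<\tfrac12$ with $f^\star>0$ on $(0,\gamma)^{d-1}$, so for $\tau\in(0,\gamma)$ the denominator is at least $\int_{(0,\gamma)^{d-2}}f^\star(\tau,v)\,dv>0$; hence there is $\tau_*\in(0,\gamma)$ such that the conditional expectation above is nonzero whenever $\tau(x_1)\in(0,\tau_*)$. Finally, the same positivity gives $P(U^{(1)}\in(0,\tau_*))>0$, and since $\tau_*<\tfrac12$ this event lies in $\{\tau(\widetilde X^{(1)})\in(0,\tau_*)\}$; so $E[\exp(iz^\top\widetilde X^{(2)})\mid\widetilde X^{(1)}]\ne0$ with positive probability. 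The only delicate point is this $d>2$ step, and it is precisely where the hypothesis that $f^\star$ is positive on a corner box is used; the rest is the routine two-point computation above.
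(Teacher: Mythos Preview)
Your proof is correct and follows essentially the same approach as the paper's: both exploit the two-to-one angular parametrization to write the conditional expectations explicitly as weighted sums of two exponentials, then rule out vanishing by a discreteness argument in the binary case and by a small-radius limit (using the corner positivity of $f^\star$) for the second assertion when $d>2$. The differences are cosmetic---your non-atomicity argument replaces the paper's interval-based contradiction, and your bound $|e^{w}-1|\le|w|e^{|w|}$ replaces the paper's use of the positivity of $\cos(\cdot)$ near zero---and your two-point step in fact yields the slightly stronger conclusion that the first conditional expectation is nonzero almost surely.
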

Then,   translation of the spherical signal does not change the radius of the sphere and the exploration density of the angles on the sphere, but only the centering of the sphere and correspondingly the distribution of the noise. Applying Theorem \ref{theo:prelim} leads then to the conclusion of Theorem \ref{theo:ident}.

The proof of Lemma \ref{lem:condi} proceeds by computing explicitly the conditional expectation, and then to give an argument why it can not be the null random variable. The argument for $d=2$ does not apply to $d>2$, in which case we use another argument needing the positivity of $f^{\star}$ near the origin. Since the choice of the positive first coordinate  to define the angles and the density is arbitrary, the proof still holds under the assumption that the density is positive near the point of the sphere at the intersection with one of the $2d$ axis directions.

\subsection{Estimation method and consistency}
\label{subsec:method}
We shall apply the method described in Section \ref{subsec:prelim} to estimate $R^{\star}$ and $f^{\star}$. For any positive real number $R$ and any probability density $f$ on $[0,1]^{d-1}$, define
$\Psi_{f,R}$ the characteristic function of the random variable with distribution on the centered sphere with radius $R$, and exploration density of the angles $f$, that is, for all $t\in \R^d$,
\begin{equation} 
\label{eq:psifR}
\Psi_{f,R}(t)=\int_{(0,1)^{d-1}}\exp \left\{i R t^{T}S(u) \right\}f(u)du.
\end{equation}
We shall consider functions $\Psi_{f,R}$ for any function $f$ on $(0,1)^{d-1}$
(not only probability densities) as defined by \eqref{eq:psifR}. Notice that  $\Psi_{f,R}$ can be extended to $\C^d$.
\\
Since all components of $\epsilon_1$ are independent, we have to make a choice of $d_1$ and $d_2$ for the definition of $M_n$ and $M$, thus, in the following, we assume to have $d_1 = 1$ and $d_2 = d-1$, as in the proof of Theorem \ref{theo:ident}. For any $\nu > 0$, define
\begin{equation*}
{M}(f,R) = \int_{B_{\nu} \times B_{\nu}^{d-1} } | \Psi_{f,R}(t_1,t_2) \Psi_{f^{\star},R^{\star}}(t_1,0) \Psi_{f^{\star},R^{\star}}(0,t_2) - \Psi_{f^{\star},R^{\star}}(t_1,t_2) \Psi_{f,R}(t_1,0) \Psi_{f,R}(0,t_2) |^2 |\Phi_{\epsilon}(t_1,t_2)|^2 d t_1 d t_2.
\end{equation*}
The parameter $\nu$ does not appear in the notation of $M$ and can be chosen as needed. \\
Fix some $\nu_{\text{est}}>0$, and define
\begin{equation*}
{M_n}(f,R) = \int_{B_{\nu_{\text{est}}}\times B_{\nu_{\text{est}}}^{d-1}} | \Psi_{f,R}(t_1,t_2) \tilde\psi_n(t_1,0) \tilde\psi_n(0,t_2) - \tilde \psi_n(t_1,t_2) \Psi_{f,R}(t_1,0) \Psi_{f,R}(0,t_2) |^2 d t_1 d t_2 ,
\end{equation*}
with
\begin{equation*}
\tilde \psi_n(t_1,t_2) = \frac{1}{n}\sum_{\ell=1}^{n} \exp\left\{it_1  \tilde{Y}_{\ell}^{(1)} + it_2^\top \tilde{Y}_{\ell}^{(2)}\right\}.
\end{equation*}
where for all $\ell$,
$\tilde{Y}_{\ell}^{(1)}=Y_{\ell}^{(1)}$, and $\tilde{Y}_{\ell}^{(2)}=(Y_{\ell}^{(2)},\ldots,Y_{\ell}^{(d)})^T$.\\
We need to fix the compact subset on which we minimize $M_{n}$. We choose ${\cal F}$ a compact subset of ${\mathbb L}^{2}[(0,1)^{d-1}]$ such that for all $f\in {\cal F}$, $\int_{(0,1)^{d-1}}f(u)du=1$, and real numbers $R_{\text{min}}$ and $R_{\text{max}}$ such that $0<R_{\text{min}}<R_{\text{max}}<+\infty$. Since we shall study minimax rates in Section \ref{sec:rates}, we shall fix later ${\cal F}$ to include all Sobolev classes of interest in that paper. Then we define $(\widehat{f},\widehat{R})$ as any measurable random variable such that
\begin{equation}
\label{eq:def:estimateur1}
M_{n}\left(\widehat{f},\widehat{R}\right) \leq
\inf_{(f,R)\in {\cal F}\times [R_{\text{min}};R_{\text{max}}]} M_{n}\left(f,R\right)+\frac{1}{n}.
\end{equation}
Notice that we do not constrain functions in ${\cal F}$ to be non negative, that is we do not constrain $\widehat{f}$ to be a probability density. Using Proposition \ref{prop:phihat} we get the following corollary which will be the basic stone to obtain estimation rates of our estimators. For any $\nu >0$, define $c_{\nu}^{\star}$ as in \eqref{eq:cnu} with $\mathbb{Q}=\mathbb{Q}^{\star}$. 

\begin{corollary}
\label{cor:basic}
Assume $f^{\star}\in{\cal F}$, $R^{\star}\in (R_{\text{min}};R_{\text{max}})$ and  $\varepsilon_1$ has finite variance.
For all $\nu\in(0,\nu_\text{est}]$ such that $c_{\nu}^{\star}>0$,
for all $\delta >0$, there exist positive constants $c_{1}, c_{2}, c_{3}$ which depend on $\delta$, $\nu$, $c_{\nu}^{\star}$, $d$ and $E(\|Y\|^2)$ such that for all $x \geq 1$ and $n\geq (1\vee xc_{1})/c_{2}$,
with probability at least $1 - e^{-x}$,
$$
\int_{[-\nu,\nu]^{d}} |\Psi_{\widehat{f},\widehat{R}} (t)- \Psi_{f^{\star},R^{\star}}(t)|^{2} dt \leq c_{3}\left(\frac{x}{n^{1-\delta}} \vee \frac{x^2}{n^{2-2\delta}} \right).
$$
\end{corollary}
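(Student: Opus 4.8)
The plan is to deduce this corollary directly from Proposition \ref{prop:phihat} by checking that the setup of that proposition can be specialized to the present parametric family $\{\Psi_{f,R} : f\in{\cal F},\ R\in[R_{\text{min}},R_{\text{max}}]\}$. First I would identify the class $\cal H$ of Proposition \ref{prop:phihat} with the family $\{\Psi_{f,R} : f\in{\cal F},\ R\in[R_{\text{min}},R_{\text{max}}]\}$, viewed as a subset of $L^2(B_{\nu_{\text{est}}}^d)$. Three properties must be verified. (i) Every element $\Psi_{f,R}$ satisfies A(dep): this follows exactly as in the proof of Theorem \ref{theo:ident} via Lemma \ref{lem:condi}, since under (H2) the conditional characteristic functions are not a.s.\ zero, and the argument only uses the spherical structure and positivity of $f^{\star}$ near the origin — but here one needs care, because arbitrary $f\in{\cal F}$ need not be positive near the origin; one should either restrict attention to a neighborhood of $f^\star$, or observe that A(dep) for $\Psi_{f,R}$ as an analytic function follows from non-degeneracy of $S$ alone (the image of $S$ spans $\R^d$), so that $z\mapsto\Psi_{f,R}(z_0,z)$ cannot vanish identically unless $f$ does. (ii) The set $\cal H$ is closed in $L^2(B_{\nu_{\text{est}}}^d)$: this follows from compactness of $\cal F$ in $L^2[(0,1)^{d-1}]$ and of $[R_{\text{min}},R_{\text{max}}]$, together with continuity of $(f,R)\mapsto\Psi_{f,R}$ from ${\cal F}\times[R_{\text{min}},R_{\text{max}}]$ into $L^2(B_{\nu_{\text{est}}}^d)$, which is immediate from \eqref{eq:psifR} by dominated convergence since $|e^{iRt^TS(u)}|=1$. (iii) There exists $S_0$ with $\Psi_{f,R}\in\Upsilon_{\rho,S_0}$ for all $(f,R)$ in the family, with $\rho=1$: this is the bound from the proof of Theorem \ref{theo:ident}, where A($\rho$) holds with $\rho=1$ and $b=\|C\|_2+R$; since $C=0$ here and $R\le R_{\text{max}}$, one may take $S_0$ depending only on $R_{\text{max}}$ and $d$, uniformly over the family.

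Second, I would match the empirical functional: the $M_n$ of Proposition \ref{prop:phihat} evaluated at $\phi=\Psi_{f,R}$ with $d_1=1$, $d_2=d-1$, and with $\tilde\phi_n$ replaced by $\tilde\psi_n$, is exactly the $M_n(f,R)$ defined in \eqref{eq:def:estimateur1}; thus $(\widehat f,\widehat R)$ defined by \eqref{eq:def:estimateur1} yields $\widehat\phi_n=\Psi_{\widehat f,\widehat R}$ as a (up to $1/n$) minimizer of $M_n$ over $\Upsilon_{\rho,S_0}\cap{\cal H}$, and $\Phi=\Psi_{f^\star,R^\star}\in\Upsilon_{\rho,S_0}\cap{\cal H}$ by (iii) and the assumption $f^\star\in{\cal F}$, $R^\star\in(R_{\text{min}},R_{\text{max}})$. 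I would also check the hypothesis $\nu\in[(d+4/3)e/S_0,\nu_{\text{est}}]$: since the conclusion of Proposition \ref{prop:phihat} depends on $\nu$ only through $c_\nu$ and the listed constants, and smaller $\nu$ only helps, one can always enlarge $S_0$ (replacing it by a larger admissible $S$, which keeps $\Psi_{f,R}\in\Upsilon_{\rho,S}$) so that $(d+4/3)e/S_0\le\nu$; alternatively, for the stated $\nu\in(0,\nu_{\text{est}}]$ one applies the proposition at $\nu'=(d+4/3)e/S_0\wedge\nu$ and then notes $\int_{[-\nu,\nu]^d}\le\int_{[-\nu',\nu']^d}$ may fail — so instead one should simply choose $S_0$ large enough at the outset, which is harmless. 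Applying Proposition \ref{prop:phihat} then gives, on an event of probability at least $1-e^{-x}$, the bound $\int_{B_\nu^d}|\Psi_{\widehat f,\widehat R}(t)-\Psi_{f^\star,R^\star}(t)|^2\,dt\le c_3(x/n^{1-\delta}\vee x^2/n^{2-2\delta})$, which is the claim, with constants $c_1,c_2,c_3$ inherited from the proposition and hence depending only on $\delta$, $\nu$, $c_\nu^\star$, $d$ and $E(\|Y\|^2)$ (the dependence on $\rho=1$, $S_0$, and $\cal H$ being absorbed into the dependence on $d$, $R_{\text{max}}$, and ${\cal F}$, which is legitimate since those are fixed in the statement).

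The main obstacle I anticipate is item (i): verifying A(dep) uniformly, or at least for all relevant $\Psi_{f,R}$, when $f$ ranges over all of $\cal F$ and is not assumed positive near the origin. The cleanest fix is to argue at the level of analytic functions: $\Psi_{f,R}(z_1,z_2)=\int f(u)\exp\{iRz_1 S^{(1)}(u)+iRz_2^TS^{(2)}(u)\}\,du$, and if this vanished for all $z_2$ at some fixed $z_1=z_0$, then by analyticity and the fact that $\{S^{(2)}(u):u\}$ is not contained in any proper affine subspace, one forces $f(u)e^{iRz_0S^{(1)}(u)}=0$ a.e., hence $f=0$ a.e., contradicting $\int f=1$; symmetrically for the other direction, using that $S^{(1)}$ is non-constant. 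This makes A(dep) automatic for every $\Psi_{f,R}$ with $f\in{\cal F}$, $R>0$, removing any positivity requirement and completing the identification of $\cal H$. The remaining steps are routine bookkeeping of constants.
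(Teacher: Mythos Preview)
Your overall strategy is exactly what the paper does: the corollary is stated as an immediate consequence of Proposition~\ref{prop:phihat}, and you correctly identify the class ${\cal H}$ with $\{\Psi_{f,R}:(f,R)\in{\cal F}\times[R_{\min},R_{\max}]\}$, match the empirical criteria $M_n$, and handle the lower threshold on $\nu$ by enlarging $S$. Items (ii) and (iii) of your checklist are fine.

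The one genuine gap is your ``cleanest fix'' for item (i). From
\[
\int_{(0,1)^{d-1}} f(u)\,e^{iRz_0 S^{(1)}(u)}\,e^{iRz_2^{\top}S^{(2)}(u)}\,du=0\quad\text{for all }z_2\in\C^{d-1},
\]
you may only conclude that the pushforward of the complex measure $f(u)e^{iRz_0S^{(1)}(u)}\,du$ under $S^{(2)}$ is the zero measure. Because $S^{(2)}$ is not injective (it is two--to--one on $(0,1)^{d-1}$), this does \emph{not} force $f(u)e^{iRz_0S^{(1)}(u)}=0$ a.e.; it only forces cancellation over each fibre of $S^{(2)}$. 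For $d=2$ this reads $f(u)e^{iRz_0\cos(2\pi u)}+f(1/2-u)e^{-iRz_0\cos(2\pi u)}=0$ on $(0,1/4)$, which can certainly hold for a nontrivial $f\in{\cal F}$ (recall ${\cal F}$ allows sign changes) at a single complex $z_0$. The paper's own proof of Lemma~\ref{lem:condi} works precisely through this two--to--one structure and needs the extra positivity hypothesis when $d>2$; your shortcut bypassing it does not go through.

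The paper does not spell out this point for the corollary either; its one-line derivation simply invokes Proposition~\ref{prop:phihat}. If you want a clean closure of the gap, do not try to verify A(dep) for every $\Psi_{f,R}$. Instead, observe that the only place in the proof of Proposition~\ref{prop:phihat} where the global structure of ${\cal H}$ enters is the bound on $P(\|\widehat h\|_{2,\nu}\ge\eta/2)$ via (25) of \cite{LCGL2021}, and what is really needed there is the separation property
\[
\inf_{(f,R)\in{\cal F}\times[R_{\min},R_{\max}],\ \|\Psi_{f,R}-\Psi_{f^\star,R^\star}\|_{2,\nu}\ge\eta/2} M(f,R)>0,
\]
which follows from Theorem~\ref{theo:ident} exactly as in the proof of Proposition~\ref{prop:consi}. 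That route uses A(dep) only for the true $\Psi_{f^\star,R^\star}$, which Lemma~\ref{lem:condi} provides.
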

We insist on the fact that the quantity $c_{\nu}^{\star}$ is unknown, and that its knowledge is not needed to construct the estimators and to get asymptotic rates, since there always exists a small enough $\nu$ such that $c_{\nu}^{\star}>0$. 

When $\mathbb{Q}^{\star}$ has a finite first moment and is centered, we can estimate the center of the sphere. We define
\begin{equation}
\label{eq:def:estimateur2}
\widehat{C}=\frac{1}{n} \sum_{i=1}^{n} Y_{i}-\widehat{R}\int_{(0,1)^{d-1}}S(u)
\widehat{f}(u)du.
\end{equation}
The estimators of the radius and of the exploration density can be proved to be consistent by applying $M$-estimator general results. Then consistency of the estimator of the radius follows. We give a detailed proved of the following proposition in Section \ref{subsec:consi}. Here, it is not needed that the noiuse has finite variance. 

\begin{proposition}
\label{prop:consi}
Assume $f^{\star}\in{\cal F}$ and $R^{\star}\in (R_{\text{min}};R_{\text{max}})$. Then 
$\widehat{R}=R^{\star}+o_{\mathbb{P}_{C^{\star},R^{\star},f^{\star},\mathbb{Q}^{\star}}}(1)$ and
$\int_{(0,1)^{d-1}}(\widehat{f}(u)-f^{\star}(u))^{2}du=o_{\mathbb{P}_{C^{\star},R^{\star},f^{\star},\mathbb{Q}^{\star}}}(1)$. If moreover $\mathbb{Q}^{\star}$ has finite first moment and is a centered distribution, then also $\widehat{C}=C^{\star}+o_{\mathbb{P}_{C^{\star},R^{\star},f^{\star},\mathbb{Q}^{\star}}}(1)$.
\end{proposition}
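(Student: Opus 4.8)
The plan is to apply the standard $M$-estimation consistency argument (in the style of van der Vaart's Theorem 5.7). We work on the parameter space $\Theta = {\cal F} \times [R_{\text{min}}, R_{\text{max}}]$, which is compact in $L^2[(0,1)^{d-1}] \times \R$, and we view $\widehat\theta = (\widehat f, \widehat R)$ as the near-minimizer of the random criterion $M_n$ and $\theta^\star = (f^\star, R^\star)$ as the minimizer of the limit criterion $M$. The three ingredients to assemble are: (i) a well-separation/identification property of $M$, namely that $\theta^\star$ is the unique minimizer of $M$ over $\Theta$ and that $M$ is lower semicontinuous so that $\inf_{d(\theta,\theta^\star)\ge \eta} M(\theta) > M(\theta^\star) = 0$ for every $\eta > 0$; (ii) a uniform law of large numbers, $\sup_{\theta \in \Theta} |M_n(\theta) - M(\theta)| \to 0$ in $\mathbb{P}_{C^\star,R^\star,f^\star,\mathbb{Q}^\star}$-probability; and (iii) the near-optimality condition $M_n(\widehat\theta) \le \inf_\theta M_n(\theta) + 1/n$ from \eqref{eq:def:estimateur1}. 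Combining these in the usual way forces $d(\widehat\theta, \theta^\star) \to 0$, which gives both $\widehat R \to R^\star$ and $\int (\widehat f - f^\star)^2 \to 0$.

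For step (i), the uniqueness of the minimizer is exactly the content that drives Theorem \ref{theo:ident}: $M(f,R) = 0$ forces $\Psi_{f,R}$ to agree with $\Psi_{f^\star,R^\star}$ on a neighborhood of the origin (using $c_\nu^\star$-type positivity of $|\Phi_\varepsilon|$ on $B_\nu$ so that the weight does not vanish), hence everywhere by analyticity, and then by the injectivity statement built into Lemma \ref{lem:condi} and Theorem \ref{theo:prelim} one recovers $R = R^\star$ and $f = f^\star$; here the absence of a translation ambiguity is automatic because the sphere in the definition of $\Psi_{f,R}$ is centered. Lower semicontinuity of $M$ follows because $f \mapsto \Psi_{f,R}(t)$ is linear and continuous from $L^2$ to $\C$ uniformly for $t$ in the compact integration domain, $R \mapsto \Psi_{f,R}(t)$ is continuous, and $M$ is then a continuous functional of these via dominated convergence (all integrands are bounded on the compact box $B_\nu \times B_\nu^{d-1}$). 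Compactness of $\Theta$ then upgrades this to the well-separation inequality.

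For step (ii), the uniform convergence of $M_n$ to $M$ reduces to the uniform convergence of the empirical characteristic function: $\sup_{t \in B_{\nu_{\text{est}}}^d} |\tilde\psi_n(t) - \Phi_Y(t)| \to 0$ a.s., which is classical (the class $\{e^{it^\top y} : t \in B_{\nu_{\text{est}}}^d\}$ is Glivenko--Cantelli). Since $M_n$ and $M$ are built from $\tilde\psi_n$ and $\Phi_Y = \Psi_{f^\star,R^\star}\cdot\Phi_\varepsilon$ (with the caveat that $M$ carries the extra factor $|\Phi_\varepsilon|^2 \le 1$, which is harmless for an upper bound but means one should phrase (ii) with a bit of care, or alternatively note that the contrast function minimized is the same in the limit) by Lipschitz-type manipulations — the maps $\phi \mapsto \phi(t_1,t_2)\phi(t_1,0)\phi(0,t_2)$ restricted to the unit ball are locally Lipschitz — the sup over $\Theta$ of $|M_n(\theta) - M(\theta)|$ is bounded by a polynomial in $\sup_t|\tilde\psi_n - \Phi_Y|$ that tends to $0$. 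Finally, once $\widehat R \to R^\star$ and $\widehat f \to f^\star$ in $L^2$, consistency of $\widehat C$ from \eqref{eq:def:estimateur2} follows by writing $\widehat C - C^\star = (\frac1n\sum Y_i - \E[Y_1]) + (R^\star \int S(u) f^\star(u)\,du - \widehat R \int S(u)\widehat f(u)\,du)$, using the law of large numbers for the first term (here $\mathbb{Q}^\star$ centered with finite first moment gives $\E[Y_1] = C^\star + R^\star\int S f^\star$), and for the second term using that $u \mapsto S(u)$ is bounded so that $\int S(u)\widehat f(u)\,du \to \int S(u) f^\star(u)\,du$ by Cauchy--Schwarz together with boundedness of $\widehat R$.

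The main obstacle is step (i), and specifically pinning down that $M(\theta) = 0 \Rightarrow \theta = \theta^\star$ cleanly: one must be sure the weight $|\Phi_\varepsilon(t_1,t_2)|^2$ in $M$ does not kill the comparison — this is where choosing $\nu$ small enough that $c_\nu^\star > 0$ enters — and one must invoke the identifiability machinery (Lemma \ref{lem:condi} plus Theorem \ref{theo:prelim}) in the right form to go from equality of characteristic functions on a neighborhood of $0$ to $R = R^\star$, $f = f^\star$ without any residual translation. The uniform LLN (step (ii)) and the handling of $\widehat C$ are routine by comparison.
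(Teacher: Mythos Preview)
Your proposal is correct and follows essentially the same approach as the paper: the paper's proof invokes Lemma~A.1 of \cite{LCGL2021} for the uniform convergence $\sup_\theta |M_n(\theta)-M(\theta)|\to 0$, uses continuity of $M$, compactness of $\mathcal{F}\times[R_{\min},R_{\max}]$ and Theorem~\ref{theo:ident} to get the well-separation inequality, and then applies van der Vaart's Theorem~5.7, exactly as you outline; consistency of $\widehat C$ is then deduced from the law of large numbers and continuity. Your parenthetical worry about the weight $|\Phi_\varepsilon|^2$ is a non-issue: since $\tilde\psi_n(t)\to e^{it^\top C^\star}\Psi_{f^\star,R^\star}(t)\Phi_\varepsilon(t)$ and the phase $e^{it^\top C^\star}$ factors out with unit modulus, the pointwise limit of $M_n(\theta)$ is exactly $M(\theta)$ with $\nu=\nu_{\text{est}}$.
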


\section{Convergence rates of the estimators}
\label{sec:rates}

In this section, we prove that the estimator of the radius has almost parametric rate of convergence, whatever the dimension $d$ of the sphere. We then get rates of convergence for the estimator of the exploration density and of the center in the case $d=2$ that is for circular signals. Our estimator of the exploration density achieves the minimax rate on Sobolev regularity classes and the estimator of the center can be proved to have almost parametric rate.

\subsection{The estimator of the radius}
\label{subsec:rad}
Our first main result is the fact that, without any knowledge of the noise distribution and of the exploration density, the radius of the sphere can be recovered at almost parametric rate.
\begin{theorem}
\label{theo:radius}
Assume $f^{\star}\in{\cal F}$ and $R^{\star}\in (R_{\text{min}};R_{\text{max}})$. Assume also that $\varepsilon_1$ has finite variance.
For all $\nu\in(0,\nu_\text{est}]$ such that $c_{\nu}^{\star}>0$,
for all $\delta >0$, there exist positive constants $c_{1}, c_{2}, c_{3}$ which depend on $\delta$, $\nu$, $c_{\nu}^{\star}$, $d$ and $E(\|Y\|^2)$ such that for all $x \geq 1$ and $n\geq (1\vee xc_{1})/c_{2}$,
with probability at least $1 - e^{-x}$,
$$ |\widehat{R} - R^{\star}|^2 
\leq \frac{c_{3}}{R_{\text{min}}^2(1-\frac{(\nu R_{\text{max}})^2}{2d+8})^2}\left(\frac{x}{n^{1-\delta}} \vee \frac{x^2}{n^{2-2\delta}} \right).$$
\end{theorem}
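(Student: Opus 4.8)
The plan is to deduce the rate for $\widehat{R}$ from the $L^2$-consistency of the characteristic function provided by Corollary~\ref{cor:basic}. The key observation is that $R$ enters $\Psi_{f,R}$ only through the rescaling $t\mapsto Rt$, so the difference $|\widehat{R}-R^\star|$ should be readable off the behaviour of $\Psi_{\widehat f,\widehat R}-\Psi_{f^\star,R^\star}$ near $t=0$, where the leading term of $\Psi_{f,R}(t)$ in its Taylor expansion is $1-\tfrac{R^2}{2}\,t^\top\!\big(\int S(u)S(u)^\top f(u)\,du\big)t+\cdots$. Because $\|S(u)\|_2=1$, the trace of $\int S(u)S(u)^\top f(u)\,du$ equals $1$ regardless of $f$, so integrating against a suitable test direction kills the dependence on $f$ and isolates $R^2$. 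Concretely, I would integrate $\Psi_{f,R}(t)$ over $t\in B_\nu^d$ (or use $\mathrm{Re}\,\Psi$), getting a quantity of the form $g(R)=\int_{B_\nu^d}\Psi_{f,R}(t)\,dt$ whose value is, up to the $f$-independent quadratic term, $(2\nu)^d\big(1-\tfrac{R^2\nu^2}{2(d+2)}\cdot\tfrac{1}{d}\big)+\text{(higher order, controlled)}$; comparing $g(\widehat R)$ with $g(R^\star)$ and bounding the higher-order remainder by $\tfrac{(\nu R_{\max})^2}{2d+8}$-type terms yields a lower bound $|g(\widehat R)-g(R^\star)|\gtrsim R_{\min}^2\big(1-\tfrac{(\nu R_{\max})^2}{2d+8}\big)\,|\widehat R^2-(R^\star)^2|$.

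The steps, in order, would be: (i) write $\Psi_{f,R}(t)-\Psi_{f,R}(0)=\int_0^1 \tfrac{d}{ds}\Psi_{f,sR}\,ds$ or expand the exponential and split off the constant and quadratic terms, keeping an exact integral remainder; (ii) integrate over $B_\nu^d$ and use $\int_{B_\nu^d} t_a t_b\,dt = 0$ for $a\neq b$ and $\int_{B_\nu^d} t_a^2\,dt=(2\nu)^d\nu^2/3$ together with $\sum_a \int S(u)_a^2 f(u)\,du = 1$ to obtain an expression $\int_{B_\nu^d}\Psi_{f,R}(t)\,dt = (2\nu)^d - \tfrac{(2\nu)^d \nu^2}{6}R^2\cdot\tfrac{1}{d} + \mathcal{R}(f,R)$ with $|\mathcal{R}(f,R)|$ bounded by a sum over higher even moments of $t$, which after factoring out $R^2$ contributes a factor $(\nu R)^2/(2d+8)$ (the constant $2d+8$ coming from the next even moment $\int t_a^4$ versus $\int t_a^2$ on the cube, together with $\|S(u)\|=1$); (iii) subtract the expressions at $\widehat R$ and $R^\star$, divide by $(\widehat R - R^\star)(\widehat R + R^\star)$, and use $R^\star,\widehat R\in[R_{\min},R_{\max}]$ to get the claimed lower bound; (iv) on the other side, bound $\big|\int_{B_\nu^d}(\Psi_{\widehat f,\widehat R}-\Psi_{f^\star,R^\star})\big| \le (2\nu)^{d/2}\big(\int_{B_\nu^d}|\Psi_{\widehat f,\widehat R}-\Psi_{f^\star,R^\star}|^2\big)^{1/2}$ by Cauchy--Schwarz and invoke Corollary~\ref{cor:basic} to control the right-hand side on the event of probability $\ge 1-e^{-x}$; (v) combine (iii) and (iv), and absorb $(\widehat R+R^\star)\ge 2R_{\min}$ and the remainder factor $\big(1-\tfrac{(\nu R_{\max})^2}{2d+8}\big)^{-1}$ to reach the stated bound.

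The main obstacle I expect is controlling the higher-order Taylor remainder sharply enough to produce a \emph{nonzero} coefficient in front of $|\widehat R^2 - (R^\star)^2|$, i.e.\ ensuring the quadratic term genuinely dominates and the remainder does not swallow it; this is exactly where the hypothesis that $\nu$ is small (so that $(\nu R_{\max})^2/(2d+8)<1$, implicitly needed for the bound to be finite and useful) enters, and where one must be careful to bound the alternating series $\sum_{k\ge 2}(-1)^k R^{2k}(\cdots)/(2k)!$ by its first neglected term using the fact that the even moments $\int_{B_\nu^d}\prod t_a^{2\alpha_a}\,dt$ decay appropriately in $\nu$ and that $\mathbb{E}[(t^\top S(U))^{2k}]\le \nu^{2k}$ since $\|S(U)\|=1$ on $B_\nu^d$. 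A secondary technical point is handling the fact that $\widehat f$ need not be a probability density (only $\int\widehat f = 1$), but since the argument only uses $\int \widehat f(u)\,du = 1$ and the pointwise bound $\|S(u)\|_2 = 1$, the identity $\sum_a \int S(u)_a^2 \widehat f(u)\,du = \int \|S(u)\|_2^2\,\widehat f(u)\,du = 1$ still holds, so this causes no real difficulty.
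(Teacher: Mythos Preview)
Your overall strategy --- find a scalar functional of $\Psi_{f,R}$ that depends on $R$ but not on $f$, then control its variation by Cauchy--Schwarz and Corollary~\ref{cor:basic} --- is exactly what the paper does. The gap is in the choice of functional: you integrate over the cube $B_\nu^d=[-\nu,\nu]^d$, and on the cube only the second-order term is $f$-independent. For $k\ge 2$ the even moments
\[
\int_{B_\nu^d}(t^\top S(u))^{2k}\,dt
\]
depend on $u$: already at order four one gets a linear combination of $\sum_a S(u)_a^4$ and $\sum_{a\neq b}S(u)_a^2 S(u)_b^2$ with \emph{different} coefficients (because $\int_{B_\nu^d}t_a^4\,dt\neq 3\int_{B_\nu^d}t_a^2t_b^2\,dt$ on a cube). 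Hence $\int_{B_\nu^d}\Psi_{f,R}(t)\,dt$ is not a function of $R$ alone, and your subtraction in step~(iii) produces a remainder
\[
\mathcal{R}(\widehat f,\widehat R)-\mathcal{R}(f^\star,R^\star)
\]
that contains terms driven by $\widehat f-f^\star$; it need not vanish (or even be small) when $\widehat R=R^\star$, so the claimed lower bound $|g(\widehat R)-g(R^\star)|\gtrsim|\widehat R^2-(R^\star)^2|$ fails. Bounding each $|\mathcal{R}(f,R)|$ individually (as you suggest) only gives $|\widehat R^2-(R^\star)^2|\le C_1\cdot(\text{integral}) + C_2$ with a constant $C_2$ that does not go to zero. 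The remark at the end, that ``only $\int\widehat f=1$ and $\|S(u)\|_2=1$ are used'', is precisely what breaks: those two facts suffice only at second order.

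The paper's fix is to integrate over the \emph{ball} $\mathbb{D}_d(0,\nu)$ instead. By rotational invariance, $\int_{\mathbb{D}_d(0,\nu)}(t^\top v)^{2k}\,dt$ is the same for every unit vector $v$, so every term in the Taylor expansion becomes $f$-independent, and one obtains exactly
\[
\frac{1}{\lambda_d(\mathbb{D}_d(0,\nu))}\int_{\mathbb{D}_d(0,\nu)}\Psi_{f,R}(t)\,dt
=\Gamma\!\left(\tfrac{d}{2}+1\right)\,2^{d/2}\,\frac{J_{d/2}(\nu R)}{(\nu R)^{d/2}},
\]
a function of $R$ only (the paper derives this via the Laplacian eigenfunction identity $\Delta\Psi_{f,R}+R^2\Psi_{f,R}=0$ and a Pizzetti-type formula, but it is equivalent to the rotational-symmetry argument). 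From there your steps (iii)--(v) go through cleanly: the mean value theorem on $H(x)=J_{d/2}(x)/x^{d/2}$, the identity $H'(x)=-J_{d/2+1}(x)/x^{d/2}$, and the alternating-series lower bound on $J_{d/2+1}$ give $|H'(\nu\widetilde R)|\ge c\,\nu R_{\min}\bigl(1-\tfrac{(\nu R_{\max})^2}{2d+8}\bigr)$ --- which is where the constant $2d+8$ in the statement actually comes from.
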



\begin{proof}

We denote by $\Delta$ the Laplacian operator in the Cartesian coordinate system,
$$\Delta = \sum_{i=1}^d \frac{\partial^2}{\partial x_i^2},$$ and for all $k \geq 1$, \  $\Delta^k = \Delta^{k-1} \circ \Delta$ with $\Delta^0$ the identity operator.
\\
Notice that $\Psi_{f,R}$ is an eigenfunction of the Laplacian, with eigenvalue $R^2$,
$$ \Delta \Psi_{f,R}(x) + R^2 \  \Psi_{f,R}(x) = 0,$$
so that for all $k \geq 1$,
$$\Delta^k \Psi_{f,R}(x) + (-1)^{k-1} R^{2k} \  \Psi_{f,R}(x) = 0.$$
Define $\mathbb{D}_{d}(0,\nu)=\{x\in\R^d,\;\|x\|_{2}\leq \nu\}$ the $d$-dimensional disk centered at the origin and with radius $\nu$, $\Gamma$ the gamma function and $\lambda_{d}$ the $d$-dimensional Lebesgue measure.
Then, according to \cite{Laplacian}, for all $\nu > 0$ and for all multivariate analytic function  $\psi$ on $\mathbb{C}^d$,

\begin{eqnarray*} 
\frac{1}{\lambda_d(
\mathbb{D}_d(0,\nu))}  \int_{\mathbb{D}_d(0,\nu)} \psi(x) dx  &= &\sum_{k=0}^{\infty} \frac{\Delta^k \psi(0)}{2^{k}k!\prod_{j=1}^k (d+2j)} \nu^{2k}\\
& = &\Gamma\left(\frac{d}{2}+1\right) \sum_{k=0}^{\infty} \frac{\Delta^k \psi(0)}{2^{2k}k! \Gamma(\frac{d}{2}+k+1)} \nu^{2k}.
\end{eqnarray*}
Applying this equality to  ${\Psi}_{\widehat{f},\widehat{R}}$ and to $\Psi_{f^{\star},R^{\star}}$
we get that
\begin{equation}
\label{eq:R:1}
 \frac{1}{(\sqrt{\pi} \nu)^d} \int_{\mathbb{D}_d(0,\nu)} \left({\Psi}_{\widehat{f},\widehat{R}}(x) - \Psi_{f^{\star},R^{\star}} (x)\right) dx = \sum_{k=0}^{\infty} (-1)^k \frac{\widehat{R}^{2k}- (R^{\star})^{2k}}{2^{2k}k!\Gamma(\frac{d}{2} + k + 1)} \nu^{2k},
 \end{equation}
since $\lambda^d(\mathbb{D}_{d}(0,\nu)) = \frac{\pi^{d/2}}{\Gamma(\frac{d}{2}+1)} \nu^{d}$. \\
Let
 $J_{d/2}$ be the Bessel function of order $(d/2)$. 
 We collect in Section \ref{sec:Bessel}  results on Bessel functions that will be useful in our analysis. Using identity (I) in in Section \ref{sec:Bessel} we get
 that for all $x\in\R$, 
\begin{equation}
\label{eq:R:2}
\sum_{k=0}^{\infty} (-1)^k \frac{x^{2k}}{2^{2k}k!\Gamma(\frac{d}{2} + k + 1)} = 2^{d/2} \frac{J_{d/2}(x)}{x^{d/2}}, 
 \end{equation}
so that
 using \eqref{eq:R:1}, \eqref{eq:R:2}, Cauchy-Schwarz inequality and the fact that 
$\mathbb{D}_d(0,\nu)\subset B_{\nu}^d$ we obtain
\begin{equation}
\label{eq:R:3}
2^{d} \left \vert \frac{J_{d/2}(\nu \widehat{R})}{(\nu \widehat{R})^{d/2}} - \frac{J_{d/2}(\nu R^{\star})}{(\nu R^{\star})^{d/2}} \right \vert^{2} \leq  \frac{1}{(\sqrt{\pi} \nu)^{d} \Gamma(\frac{d}{2} +1)}  \int_{B_{\nu}^d}\left( {\Psi}_{\widehat{f},\widehat{R}}(x) - \Psi_{f^{\star},R^{\star}}(x) \right)^{2}dx.
 \end{equation}
Let $H$ be the function defined by
$$
\forall x \neq 0,\;H(x) = \frac{J_{d/2}( x)}{x^{d/2}},\; H(0) = \frac{1}{2^{d/2} \Gamma(\frac{d}{2} + 1)}.$$
Then using \eqref{eq:R:2}, $H$ has infinitely many derivatives so that  there exists $\widetilde{R}\in (R^{\star},\widehat{R})$   
such that
$$
H(\nu \widehat{R}) - H(\nu R^{\star}) = \nu (\widehat{R} - R^{\star}) H'(\nu \widetilde{R}).
$$
Computation of the derivative and (IV) in Section \ref{sec:Bessel} gives
$$H'(x) = \frac{ x (J_{d/2 -1}(x) -  J_{d/2 + 1}( x)) - d J_{d/2}( x)}{2 x^{d/2+1}},$$
and using (V) in Section \ref{sec:Bessel} we get 
$$H'(x) = -  \frac{J_{d/2+1}( x)}{  x^{d/2}}.
$$
Using lemma \ref{bessel1} in Section \ref{sec:Bessel}, we get that
$$ J_{d/2 + 1}(\nu \widetilde{R}) \geq \frac{(\nu \widetilde{R})^{d/2 + 1}}{2^{d/2 + 1} \Gamma(\frac{d}{2} + 2)}\left(1-\frac{(\nu \widetilde{R})^2}{2d + 8}\right). $$
Since $\widetilde{R} \in (R_{min},R_{max})$, we deduce that for any $\nu\in (0,1/R_{max})$, 

\begin{align*}
|H'( \nu \widetilde{R})| &= \left| \frac{J_{d/2}( \nu \widetilde{R})}{(\nu \widetilde{R})^{d/2}} \right|  \geq  \frac{\nu \widetilde{R}}{2^{d/2 + 1} \Gamma(\frac{d}{2} + 2)} \left| 1 - \frac{(\nu \widetilde{R})^2}{2d+8} \right|\\
    & \geq \frac{ \nu R_{min}}{2^{d/2 + 1} \Gamma(\frac{d}{2} + 2)}\left(1-\frac{(\nu R_{max})^2}{2d + 8}\right) >  0,
 \end{align*}

so that
$$|\widehat{R} - R^{\star}| \leq \frac{2^{d/2 + 1} \Gamma(\frac{d}{2}+2)}{\nu^2 R_{min}(1-\frac{(\nu R_{max})^2}{2d+8})} |H(\nu \widehat{R}) - H(\nu R^{\star})|.$$
Using \eqref{eq:R:3}
we get
$$ |\widehat{R} - R^{\star}|^2 \leq  \frac{ 4 \Gamma(\frac{d}{2} + 2)^2}{\nu^4 (\sqrt{\pi} \nu) ^{d} R_{min}^2(1-\frac{(\nu R_{max})^2}{2d+8})^2} \int_{B_{\nu}^d}\left( {\Psi}_{\widehat{f},\widehat{R}}(x) - \Psi_{f^{\star},R^{\star}}(x) \right)^{2}dx.$$
The end of the proof follows from
Corollary \ref{cor:basic}.
\end{proof}

\subsection{The estimator of the density and of the center}
\label{subsec:dens}
In this section, we consider the case of circular signals, that is $d=2$.
In this case, we can rewrite model \eqref{eq:sphere} using one dimensional angles $U_{i}\in [0,1]$, as
\begin{equation}
\label{eq:circle}
X_{i}=C^{\star} + R^{\star}\begin{pmatrix} \cos(2 \pi U_{i}) \\  \sin(2\pi U_{i})  \end{pmatrix}.
\end{equation}
We shall focus on the following regularity classes. For any $L>0$, $\beta > \frac{1}{2}$ and $\gamma > 0$, set
$$ W_{\beta}(L) = \lbrace f \in \mathbb{L}^2([0,1]) : \sum_{k = - \infty}^{\infty} |f_k|^2 |k|^{2 \beta} \leq L^2 \rbrace,$$
$$A_{\gamma}(L) = \lbrace f \in \mathbb{L}^2([0,1]) : \sum_{k = - \infty}^{\infty} |f_k|^2 e^{2 \gamma k} \leq L^2 \rbrace,$$
where for any function $f\in \mathbb{L}^2([0,1])$, $(f_k)_{k\in \Z}$ is the sequence of Fourier coefficients of $f$:
$$ f_k = \int_0^{1} f(\theta) e^{2 i \pi k \theta} d\theta, \ \ k \in \mathbb{Z}.$$
We fix $\cal F$ as a compact subset of ${\mathbb L}^{2}[(0,1)^{d-1}]$ such that for all $f\in {\cal F}$, $\int_{(0,1)^{d-1}}f(u)du=1$, and containing as subsets all 
$W_{\beta}(L)$ and $A_{\gamma}(L)$ for all $\beta > \frac{1}{2}$, $\gamma > 0$, and $L\leq L_{max}$ chosen.
We shall now define an estimator of $f^{\star}$ using truncated Fourier expansions of $\widehat{f}$ defined in Section \ref{subsec:method}. 
For $N > 0$ an integer to be chosen, we define $T_N\widehat{f}$ the trigonometric polynomial estimator of $f^{\star}$:
$$ 
\forall x \in (0,1), \ \ T_N\widehat{f}(x) = \sum_{|k| \leq N} \widehat{f}_k e^{-2 i \pi k x}.
$$

For any $\nu>$, $c_{\nu} >0$, $E>0$, define
${\cal Q} (\nu,c({\nu}),E)$ the set of distribution $\mathbb{Q}=\mathbb{Q}_{1}\otimes \mathbb{Q}_{2}$ on $\R^2$ such that $c_{\nu}\geq c(\nu)$ and $\int_{\R^2} \|x\|^2 d\mathbb{Q}(x)\leq E$. Define now the maximum risk of the estimator for any class of densities $\cal C$ and any class of noise distribution $\cal Q$ as follows.
$$
R\left[T_{N}\widehat{f}; {\cal C}; R_{\text{min}};R_{\text{max}}; {\cal Q}\right]
=\sup_{f\in {\cal C}, R\in [R_{\text{min}};R_{\text{max}}], {\mathbb Q}\in {\cal Q}, C\in \R^2}
\mathbb{E}_{C,R,f,{\mathbb Q}}\left(\int_{0}^{1}(T_{N}\widehat{f} (x) - f(x))^2dx\right).
$$
The following theorem shows that a good choice of $N$ leads to minimax adaptive estimation rate over the regularity classes $W_{\beta}(L)$ and controlled maximum risk over the regularity classes $A_{\gamma}(L)$.
\begin{theorem}
\label{theo:density}
For $\alpha \in (0,1/2)$, set
 $$N= \left \lfloor \alpha\frac{\log n}{\log \log n}
 \right \rfloor.  $$ Then for all $L>0$, $\beta > 1/2$, (resp. $\gamma >0$), for all $\nu\in(0,\nu_\text{est}]$,
 $c(\nu)>0$, $E>0$,
\begin{equation}
\label{eq:beta}
R\left[T_{N}\widehat{f}; W_{\beta}(L); R_{\text{min}};R_{\text{max}}; {\cal Q}(\nu,c({\nu}),E)\right]
\leq L^{2}\alpha^{-2\beta}\left ( \frac{lnln(n)}{ln(n)} \right )^{2\beta}(1+o(1))
\end{equation}
as $n$ tends to infinity, and
\begin{equation}
\label{eq:gamma}
R\left[T_{N}\widehat{f}; A_{\gamma}(L); R_{\text{min}};R_{\text{max}}; {\cal Q}(\nu,c({\nu}),E)\right]
\leq exp \left ( -2 \gamma \left (\alpha\frac{ln(n)}{lnln(n)} \right ) \right )(1+o(1)) 
\end{equation}
as $n$ tends to infinity.
\end{theorem}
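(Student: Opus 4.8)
The plan is to reduce the analysis of $T_N\widehat f$ to a bias-variance-type decomposition in the Fourier domain, where the "variance" term is controlled by Corollary \ref{cor:basic} and the "bias" term by the Sobolev/analytic smoothness of $f^\star$. Writing $f^\star_k$ and $\widehat f_k$ for the Fourier coefficients, Parseval gives
\[
\int_0^1 (T_N\widehat f(x)-f^\star(x))^2\,dx = \sum_{|k|\le N}|\widehat f_k - f^\star_k|^2 + \sum_{|k|>N}|f^\star_k|^2 .
\]
The second sum is the truncation error: on $W_\beta(L)$ it is bounded by $L^2 N^{-2\beta}$, and on $A_\gamma(L)$ by $L^2 e^{-2\gamma N}$, directly from the definitions of the classes. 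With $N=\lfloor \alpha \log n/\log\log n\rfloor$ these are exactly the announced rates $L^2\alpha^{-2\beta}(\log\log n/\log n)^{2\beta}$ and $\exp(-2\gamma\alpha\log n/\log\log n)$ up to $(1+o(1))$. So the whole content is showing that the first sum, $\sum_{|k|\le N}|\widehat f_k-f^\star_k|^2$, is negligible compared to these — in fact $o$ of them — uniformly over $f^\star\in{\cal C}$, $R^\star\in[R_{\min},R_{\max}]$, ${\mathbb Q}^\star\in{\cal Q}(\nu,c(\nu),E)$, $C^\star$.

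For the estimation term, the key is to relate the Fourier coefficients of $\widehat f$ to the values of $\Psi_{\widehat f,\widehat R}$, which Corollary \ref{cor:basic} controls in $L^2([-\nu,\nu]^2)$. For $d=2$ and $t$ with $\|t\|_2 = r$ and polar angle $\theta$, $\Psi_{f,R}(t)=\int_0^1 \exp\{iRr\cos(2\pi u-\theta)\}f(u)\,du$, which is (a rescaling of) the standard Jacobi–Anger expansion: $\Psi_{f,R}(t)=\sum_{k\in\Z} i^k f_{-k}\, e^{ik\theta} J_k(Rr)$. Hence the $k$-th Fourier coefficient of $\widehat f$ can be extracted by integrating $\Psi_{\widehat f,\widehat R}$ against $e^{-ik\theta}$ on a circle of some radius $r_0\le\nu$ and dividing by $J_k(\widehat R r_0)$; the same for $f^\star$ with $J_k(R^\star r_0)$. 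The difference $|\widehat f_k - f^\star_k|$ is then bounded by $\frac{1}{|J_k(\widehat R r_0)|}$ times the $L^2$ distance $\|\Psi_{\widehat f,\widehat R}-\Psi_{f^\star,R^\star}\|_{L^2}$ on that circle, plus a term coming from $|J_k(\widehat R r_0)-J_k(R^\star r_0)|$ times $|f^\star_k|$, which is itself controlled since Theorem \ref{theo:radius} gives $|\widehat R - R^\star|$ at almost parametric rate. Summing over $|k|\le N$ and using that $J_k(z)$ for small fixed $z$ behaves like $(z/2)^k/k!$, the worst factor is $1/|J_N(\widehat R r_0)|^2 \sim (N!)^2 (2/(R_{\min}r_0))^{2N}$, which by Stirling is of order $\exp(2N\log N(1+o(1)))$. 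Since $N\asymp \log n/\log\log n$, this inflation factor is $n^{2\alpha(1+o(1))}$, while Corollary \ref{cor:basic} supplies $\|\Psi_{\widehat f,\widehat R}-\Psi_{f^\star,R^\star}\|^2 \lesssim n^{-1+\delta}$ with probability $1-e^{-x}$; choosing $\alpha<1/2$ and $\delta$ small makes the product $o(n^{-c})$ for some $c>0$, hence negligible against the polylogarithmically-decaying bias. To pass from the high-probability bound to the expected risk in the definition of $R[\cdot]$, I would integrate the tail bound of Corollary \ref{cor:basic} and Theorem \ref{theo:radius} (taking $x=\log n$, say), noting that on the complementary event of probability $e^{-x}$ the integrated squared error $\int_0^1(T_N\widehat f - f^\star)^2$ is crudely bounded by a polynomial in $N$ times $\sup_{\cal F}\|f\|_\infty^2$ (or using that $\widehat f\in{\cal F}$ compact), so that this contribution is $O(e^{-\log n}\,\mathrm{poly}(n))=o(1)$ relative to the main terms.

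The main obstacle is the combined control of the Bessel normalization blowup and the radius-estimation error: one must check that the factor $1/|J_k(\widehat R r_0)|$ does not explode faster than $\exp(N\log N)$ for all $|k|\le N$ simultaneously — which requires either choosing $r_0$ uniformly away from the zeros of $J_k$ for all $k\le N$ (possible since the first positive zero of $J_k$ grows like $k$, so any fixed small $r_0$ works once $R_{\min}r_0$ is below the first zero), and uniformly lower-bounding $|J_k(\widehat R r_0)|$ by its leading term using a quantitative version of Lemma \ref{bessel1} from Section \ref{sec:Bessel}. Handling the dependence of all constants on $(R^\star,\mathbb{Q}^\star,C^\star,f^\star)$ uniformly over the stated classes is bookkeeping once that quantitative Bessel bound is in place, since the constants in Corollary \ref{cor:basic} depend on $\mathbb{Q}^\star$ only through $c_\nu^\star\ge c(\nu)$ and $E(\|Y\|^2)$, both of which are uniformly bounded on ${\cal Q}(\nu,c(\nu),E)$ (using $R\le R_{\max}$ and $f\in{\cal F}$). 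Finally, optimizing: the choice $N=\lfloor\alpha\log n/\log\log n\rfloor$ is exactly the one that balances the requirement $\alpha<1/2$ (so the variance inflation $n^{2\alpha}$ stays dominated) against making $N$ as large as possible to shrink the bias, giving the stated rates.
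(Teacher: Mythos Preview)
Your overall strategy coincides with the paper's: Parseval gives the bias-plus-estimation decomposition, the bias $\sum_{|k|>N}|f_k^\star|^2$ is bounded by $L^2 N^{-2\beta}$ (resp.\ $L^2 e^{-2\gamma N}$) directly from the class definitions, and the estimation term is shown to be of order $n^{-(1-\delta)}$ times a Bessel-inflation factor $(N!)^2(2/(\nu R_{\min}))^{2N}\sim n^{2\alpha(1+o(1))}$, which is negligible as soon as $2\alpha<1-\delta$. The use of the Jacobi--Anger expansion to express $\Psi_{f,R}$ in terms of $f_k J_k(rR)$, the control of $|J_k(\widehat R\cdot)-J_k(R^\star\cdot)|$ through Theorem~\ref{theo:radius} and (IV), the lower bound on $J_k$ from Lemma~\ref{bessel1}, and the passage from the exponential tail bound to the expected risk are all the paper's ingredients.

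There is, however, one concrete gap. You extract $\widehat f_k$ by integrating $\Psi_{\widehat f,\widehat R}$ against $e^{-ik\theta}$ on a \emph{single} circle of radius $r_0$, and then bound $|\widehat f_k-f_k^\star|$ by $|J_k(\widehat R r_0)|^{-1}$ times the $L^2$ norm of $\Psi_{\widehat f,\widehat R}-\Psi_{f^\star,R^\star}$ \emph{on that circle}. But Corollary~\ref{cor:basic} controls only the two-dimensional norm $\int_{[-\nu,\nu]^2}|\Psi_{\widehat f,\widehat R}-\Psi_{f^\star,R^\star}|^2\,dt$; it says nothing about the restriction to a one-dimensional circle, and no trace inequality is available here without additional argument. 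The paper closes this gap by integrating over all radii: with $\Phi_{f,R}(r,\theta)=\Psi_{f,R}(r\cos 2\pi\theta,r\sin 2\pi\theta)$, Parseval in $\theta$ gives
\[
\int_0^1 |\Phi_{f^\star,R^\star}(r,\theta)-\Phi_{\widehat f,\widehat R}(r,\theta)|^2\,d\theta
=\sum_{k\in\Z}\bigl|f_k^\star J_k(rR^\star)-\widehat f_k J_k(r\widehat R)\bigr|^2,
\]
and integrating $r\,dr$ over $(0,\nu)$ turns the left-hand side into $\|\Psi_{f^\star,R^\star}-\Psi_{\widehat f,\widehat R}\|^2_{L^2(\mathbb D_2(0,\nu))}$, which \emph{is} dominated by the quantity in Corollary~\ref{cor:basic}. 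The Bessel normalization then becomes $\int_0^\nu r J_k(r\widehat R)^2\,dr$, lower bounded via Lemma~\ref{lemmabessel} by $\tfrac{9\nu^2}{32}(\nu R_{\min})^{2N}/\bigl((N+1)2^{2N}(N!)^2\bigr)$. With this modification your argument goes through and is exactly the paper's Proposition~\ref{prop:density}.
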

In \cite{Go2002}, the author studies the estimation of the exploration density for noisy circular data on the unit circle (known radius) and with known noise distribution. Comparison of \eqref{eq:beta} with Theorem 1 in \cite{Go2002} shows that our estimator is rate minimax adaptive to unknown radius, unknown noise distribution and unknown regularity over classes $W_{\beta}(L)$ for the signal, with a constant deteriorated by a factor at most $2^{2\beta}$. Comparing \eqref{eq:gamma} with Theorem 2 in \cite{Go2002} shows that a loss in the upper bound for the rate of convergence of the maximum risk of our estimator in case of unknown radius and unknown noise distribution on classes $A_{\gamma}(L)$ for the signal.

\begin{proof}

For any $f\in {\mathcal F}$,
$$ \int_{0}^{1}\left(T_N\widehat{f}(x) - f(x)\right)^2dx = \sum_ {|k| \leq N} |f_k - \widehat{f}_k|^2 + \sum_{|k|>N} |f_k|^2,$$
so that for any $R\in (R_{\text{min}};R_{\text{max}})$, ${\mathbb Q}\in {\cal Q}(\nu,c({\nu}),E)$, $C\in \R^2$,
$$
\mathbb{E}_{C,R,f,{\mathbb Q}}\left(\int_{0}^{1}(T_{N}\widehat{f} (x) - f(x))^2dx\right) = \mathbb{E}_{C,R,f,{\mathbb Q}}\left[\sum_ {|k| \leq N} |f_k - \widehat{f}_k|^2\right] + \sum_{|k|>N} |f_k|^2.
$$
The first term on the right hand side will be shown to be negligible with respect to the second term thanks to the following proposition, for which a detailed proof can be found in Section \ref{subsec:propdensity}
\begin{proposition}
\label{prop:density}
Assume $f^{\star} \in {\cal F}$ and $R^{\star}\in (R_{\text{min}},R_{\text{max}})$. For all $\nu\in(0,\nu_\text{est}]$ such that $c_{\nu}^{\star}>0$,
for all $\delta >0$, there exists a constant $ c > 0$ depending on $\delta$, $\nu$, $c_{\nu}^{\star}$, $d$, $R^{\star}$, $R_{\text{min}}$, $R_{\text{max}}$, and $E(\|Y\|^2)$ such that for all $x \geq 1$, 
 $n\geq (1\vee xc_{1})/c_{2}$,
with probability at least $1 - e^{-x}$,
\begin{equation} \sum_{|k| \leq N} |f_k^{\star} - \widehat{f}_k|^2 \leq c \left ( \frac{2}{\nu R_{\text{min}}} \right )^{2N}(N+1) (N!)^2  \left(\frac{x}{n^{1-\delta}} \vee \frac{x^2}{n^{2-2\delta}} \right).
\label{eq:density}
\end{equation}
\end{proposition}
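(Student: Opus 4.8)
The plan is to reconstruct, for $d=2$, the Fourier coefficients of a density from the values on the disk $\mathbb{D}_{2}(0,\nu)\subset B_{\nu}^{2}$ of the associated characteristic function $\Psi_{f,R}$, and then to feed into this reconstruction the $L^{2}$-control of Corollary~\ref{cor:basic} on $\Psi_{\widehat{f},\widehat{R}}-\Psi_{f^{\star},R^{\star}}$ together with the control of Theorem~\ref{theo:radius} on $|\widehat{R}-R^{\star}|$.

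Writing $t=(\lambda\cos\phi,\lambda\sin\phi)$ and using the Jacobi--Anger expansion $e^{iz\cos\theta}=\sum_{n\in\Z}i^{n}J_{n}(z)e^{in\theta}$ (the interchange of sum and integral being legitimate by $\sum_{n}J_{n}(z)^{2}=1$ and Cauchy--Schwarz), one gets
$$\Psi_{f,R}\big(\lambda\cos\phi,\lambda\sin\phi\big)=\sum_{n\in\Z}i^{n}J_{n}(R\lambda)\,e^{-in\phi}\,f_{n}.$$
Multiplying by $\big(t_{1}+\mathrm{sgn}(k)\,i\,t_{2}\big)^{|k|}=\lambda^{|k|}e^{\mathrm{sgn}(k)i|k|\phi}$, integrating over $\mathbb{D}_{2}(0,\nu)$, and applying the Bessel identity $\int_{0}^{\nu}\lambda^{m+1}J_{m}(R\lambda)\,d\lambda=\nu^{m+1}J_{m+1}(R\nu)/R$ of Section~\ref{sec:Bessel}, only the index $n=k$ survives, and one obtains, for every $k\in\Z$ for which $J_{|k|+1}(R\nu)\neq 0$,
$$f_{k}=\frac{\gamma_{k}\,R}{\nu^{|k|+1}J_{|k|+1}(R\nu)}\int_{\mathbb{D}_{2}(0,\nu)}\Psi_{f,R}(t)\,\big(t_{1}+\mathrm{sgn}(k)\,i\,t_{2}\big)^{|k|}\,dt,\qquad|\gamma_{k}|=\tfrac{1}{2\pi}.$$
Applying this to $(\widehat{f},\widehat{R})$ and to $(f^{\star},R^{\star})$ and subtracting, I would split $\widehat{f}_{k}-f_{k}^{\star}=I_{k}+II_{k}$, where $I_{k}$ uses the common normalising factor $J_{|k|+1}(\widehat{R}\nu)^{-1}$ and carries $\int_{\mathbb{D}_{2}(0,\nu)}(\Psi_{\widehat{f},\widehat{R}}-\Psi_{f^{\star},R^{\star}})(t_{1}+\mathrm{sgn}(k)it_{2})^{|k|}\,dt$, while $II_{k}=\big(\widehat{R}\,J_{|k|+1}(R^{\star}\nu)\,/\,(R^{\star}J_{|k|+1}(\widehat{R}\nu))-1\big)f_{k}^{\star}$ measures the mismatch of the two normalising constants.

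To bound $I_{k}$ I would use Cauchy--Schwarz together with $\|(t_{1}\pm it_{2})^{|k|}\|_{L^{2}(\mathbb{D}_{2}(0,\nu))}^{2}=\pi\nu^{2|k|+2}/(|k|+1)$, the inclusion $\mathbb{D}_{2}(0,\nu)\subset B_{\nu}^{2}$, and the lower bound $J_{m}(x)\ge\frac{x^{m}}{2^{m}m!}\big(1-\frac{x^{2}}{4(m+1)}\big)>0$ on $[R_{\text{min}}\nu,R_{\text{max}}\nu]$ from Lemma~\ref{bessel1}, which gives $|I_{k}|^{2}\le C\,(|k|+1)(|k|!)^{2}\big(\tfrac{2}{\nu R_{\text{min}}}\big)^{2|k|}\,\|\Psi_{\widehat{f},\widehat{R}}-\Psi_{f^{\star},R^{\star}}\|_{L^{2}(B_{\nu}^{2})}^{2}$. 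To bound $II_{k}$ I would write $\widehat{R}J_{m}(R^{\star}\nu)-R^{\star}J_{m}(\widehat{R}\nu)=(\widehat{R}-R^{\star})J_{m}(R^{\star}\nu)+R^{\star}\big(J_{m}(R^{\star}\nu)-J_{m}(\widehat{R}\nu)\big)$, use the mean value theorem with the elementary small-argument estimates for $J_{m}$ and $J_{m}'$, together with $|f_{k}^{\star}|\le 1$, to get $|II_{k}|^{2}\le C\,(|k|+1)^{2}\big(\tfrac{R_{\text{max}}}{R_{\text{min}}}\big)^{2|k|}|\widehat{R}-R^{\star}|^{2}$. Summing over $|k|\le N$ and using that (for $\nu R_{\text{min}}\le 2$) consecutive terms of $(|k|+1)(|k|!)^{2}\big(2/(\nu R_{\text{min}})\big)^{2|k|}$ grow by a factor at least $2$, so that the sum is at most a constant times its last term, I obtain
$$\sum_{|k|\le N}\big(|I_{k}|^{2}+|II_{k}|^{2}\big)\le C\Big(\tfrac{2}{\nu R_{\text{min}}}\Big)^{2N}(N+1)(N!)^{2}\Big(\|\Psi_{\widehat{f},\widehat{R}}-\Psi_{f^{\star},R^{\star}}\|_{L^{2}(B_{\nu}^{2})}^{2}+|\widehat{R}-R^{\star}|^{2}\Big),$$
the $II$-part being absorbed because $(N!)^{2}$ dominates any fixed geometric factor. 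Finally, on the event of probability at least $1-e^{-x}$ furnished by Corollary~\ref{cor:basic} (for $n\ge(1\vee xc_{1})/c_{2}$), both $\|\Psi_{\widehat{f},\widehat{R}}-\Psi_{f^{\star},R^{\star}}\|_{L^{2}(B_{\nu}^{2})}^{2}$ and, through Theorem~\ref{theo:radius}, $|\widehat{R}-R^{\star}|^{2}$ are at most $c_{3}\big(x/n^{1-\delta}\vee x^{2}/n^{2-2\delta}\big)$; substituting yields \eqref{eq:density}.

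The step I expect to be the main obstacle is the quantitative, uniform control of the Bessel functions over all orders $m=|k|+1$ with $|k|\le N$: one must keep $J_{|k|+1}$ away from zero and arrange the resulting factorial factors so that the final bound comes out exactly as $\big(2/(\nu R_{\text{min}})\big)^{2N}(N+1)(N!)^{2}$ with no extra polynomial-in-$N$ loss; this is also where the (implicit) requirement that $\nu R_{\text{max}}$ be not too large enters, exactly as in Theorem~\ref{theo:radius}.
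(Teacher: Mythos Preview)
Your argument is correct and reaches exactly the stated bound, but it follows a genuinely different route from the paper's.

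The paper works in polar coordinates and applies Parseval in the angular variable to obtain, for each $r\in(0,\nu)$,
\[
\int_{0}^{1}\big|\Phi_{f^{\star},R^{\star}}(r,\theta)-\Phi_{\widehat f,\widehat R}(r,\theta)\big|^{2}d\theta
=\sum_{k\in\Z}\big|f_{k}^{\star}J_{k}(rR^{\star})-\widehat f_{k}J_{k}(r\widehat R)\big|^{2},
\]
then uses the elementary inequality $|a-b|^{2}\ge\tfrac12|a|^{2}-|b|^{2}$ with $a=(\widehat f_{k}-f_{k}^{\star})J_{k}(r\widehat R)$ and $b=f_{k}^{\star}\big(J_{k}(rR^{\star})-J_{k}(r\widehat R)\big)$, integrates in $r$, invokes the Lipschitz bound (IV) for the $b$--term, and finally applies the integral lower bound of Lemma~\ref{lemmabessel} on $\int_{0}^{\nu} rJ_{k}(r\widehat R)^{2}\,dr$ to peel off $|f_{k}^{\star}-\widehat f_{k}|^{2}$. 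You instead build an explicit one--coefficient inversion formula by integrating $\Psi_{f,R}$ against $(t_{1}\pm it_{2})^{|k|}$ over the disk, which reduces the problem to a pointwise lower bound on $J_{|k|+1}(\widehat R\nu)$ (Lemma~\ref{bessel1}) plus a separate $I_{k}/II_{k}$ split for the mismatch of normalising constants.

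Two small remarks. First, the antiderivative identity $\int_{0}^{\nu}\lambda^{m+1}J_{m}(R\lambda)\,d\lambda=\nu^{m+1}J_{m+1}(R\nu)/R$ that you attribute to Section~\ref{sec:Bessel} is not actually listed there; it is the standard relation $(x^{m+1}J_{m+1}(x))'=x^{m+1}J_{m}(x)$ and is easy to supply, but you should cite it correctly. Second, the paper's Parseval route is slightly more economical: the cross term is handled in one line via (IV) and $\sum_{|k|\le N}|f_{k}^{\star}|^{2}\le\|f^{\star}\|_{2}^{2}$, avoiding your $II_{k}$ analysis entirely. On the other hand, your approach yields an explicit reconstruction formula for each $f_{k}$ from $\Psi_{f,R}$, which is conceptually nice and needs only the pointwise Lemma~\ref{bessel1} rather than the integrated Lemma~\ref{lemmabessel}.
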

Choose $\delta$ small enough so that $2\alpha < 1-\delta$.
Then for large enough $n$, for a constant $c>0$,
$$
\mathbb{E}_{C,R,f,{\mathbb Q}}\left[\sum_ {|k| \leq N} |f_k - \hat{f}_k^n|^2\right] \leq c \ (\nu R_{\text{min}})^{-2N} (N+1) 2^{2N} (N!)^2 n^{-1 + \delta},
$$
and using the fact that, $\forall N \geq 1$, $N! \leq e N^{N + \frac{1}{2}} e^{-N}$, we finally have,
\begin{equation}
\label{eq:var}
\mathbb{E}_{C,R,f,{\mathbb Q}}\left[\sum_ {|k| \leq N} |f_k - \hat{f}_k^n|^2\right] \leq e^2 c \ (\nu R_{\text{min}})^{-2N} (N+1) 2^{2N} N^{2N+1}e^{-2N}  n^{-1 + \delta}.
\end{equation}

The term at the right hand side of \eqref{eq:var} is at most of order

$$ \exp \Bigg \{ (2 \alpha + \delta - 1) \ ln(n) \Bigg [ 1 + o(1) \Bigg ] \Bigg \}. $$

Now,
\begin{equation}
\label{eq:bias1}
\sup_{f\in W_{\beta}(L)}\sum_{|k|>N} |f_k|^2 \leq L^2 N^{-2 \beta},
\end{equation}
and
\begin{equation}
\label{eq:bias2}
\sup_{f\in A_{\gamma}(L)}\sum_{|k|>N} |f_k|^2 \leq
L^2 e^{-2 \gamma N}.
\end{equation}
Equation \eqref{eq:beta} follows from \eqref{eq:var} and \eqref{eq:bias1}, and equation \eqref{eq:gamma} follows from \eqref{eq:var} and \eqref{eq:bias2}.
\end{proof}

\begin{theorem}
\label{theo:center}
Assume $f^{\star} \in {\cal F}$ and $R^{\star}\in (R_{\text{min}},R_{\text{max}})$. Assume also that  $\varepsilon_1$ has finite variance.
Then for any $\delta >0$, 
$$\left( \widehat{C}-C^{\star} \right)
=O_{\mathbb{P}_{C^{\star},R^{\star},f^{\star},\mathbb{Q}^{\star}}}\left(n^{-1/2 + \delta}\right).
$$

\end{theorem}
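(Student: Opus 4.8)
The plan is to decompose $\widehat C - C^\star$ into two contributions and control each separately. Starting from the definition \eqref{eq:def:estimateur2} and the fact that $C^\star = \mathbb{E}_{C^\star,R^\star,f^\star,\mathbb{Q}^\star}[Y_1] - R^\star \int_{(0,1)^{d-1}} S(u) f^\star(u)\,du$ (using that $\mathbb{Q}^\star$ is centered), I would write
$$
\widehat C - C^\star = \underbrace{\Bigl(\frac{1}{n}\sum_{i=1}^n Y_i - \mathbb{E}[Y_1]\Bigr)}_{(\mathrm{I})} - \underbrace{\Bigl(\widehat R \int S(u)\widehat f(u)\,du - R^\star \int S(u) f^\star(u)\,du\Bigr)}_{(\mathrm{II})}.
$$
Term $(\mathrm{I})$ is a centered empirical average of i.i.d. vectors with finite variance (since $\varepsilon_1$ has finite variance and $X_1$ is bounded), so by the CLT or a Rosenthal/Markov-type bound it is $O_{\mathbb{P}}(n^{-1/2})$, which is already inside the claimed $O_{\mathbb{P}}(n^{-1/2+\delta})$. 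The real work is to show term $(\mathrm{II})$ is $O_{\mathbb{P}}(n^{-1/2+\delta})$.

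For term $(\mathrm{II})$ the key observation is that $\int_{(0,1)^{d-1}} S(u) f(u)\,du$ is, up to the factor $R$, essentially the gradient at $0$ of the characteristic function $\Psi_{f,R}$: indeed differentiating \eqref{eq:psifR} gives $\nabla_t \Psi_{f,R}(0) = i R \int S(u) f(u)\,du$. So $R \int S(u) f(u)\,du = -i\,\nabla \Psi_{f,R}(0)$, and hence $(\mathrm{II}) = -i\bigl(\nabla \Psi_{\widehat f,\widehat R}(0) - \nabla \Psi_{f^\star,R^\star}(0)\bigr)$. The plan is then to bound the difference of gradients at $0$ of two analytic functions by the $L^2$-norm of their difference on $B_\nu^d$, exactly as was done for the Laplacian/Bessel argument in the proof of Theorem \ref{theo:radius}. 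Concretely, since $\Psi_{f,R}$ is analytic with controlled derivatives (it belongs to some $\Upsilon_{\rho,S}$ class, and on $\cal F \times [R_{\min},R_{\max}]$ this is uniform), a Cauchy-type estimate — or a mean-value / Markov-brothers inequality for analytic functions on a ball — gives
$$
\bigl|\nabla \Psi_{\widehat f,\widehat R}(0) - \nabla \Psi_{f^\star,R^\star}(0)\bigr| \leq c\,\Bigl(\int_{B_\nu^d} |\Psi_{\widehat f,\widehat R}(t) - \Psi_{f^\star,R^\star}(t)|^2\,dt\Bigr)^{1/2}
$$
for a constant $c$ depending only on $\nu$, $d$, and the $\Upsilon_{\rho,S}$ parameters. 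Then Corollary \ref{cor:basic} bounds the right-hand side by $c_3^{1/2}(x^{1/2} n^{-(1-\delta)/2} \vee x\, n^{-(1-\delta)})$, which with $x$ of order $\log n$ (to make the exceptional probability negligible) yields $(\mathrm{II}) = O_{\mathbb{P}}(n^{-1/2+\delta'})$ after adjusting $\delta$. Combining with term $(\mathrm{I})$ finishes the proof.

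The main obstacle I anticipate is making the interpolation inequality — bounding a derivative at a point by the $L^2$-norm on a ball, uniformly over the (infinite-dimensional) parameter set $\cal F \times [R_{\min},R_{\max}]$ — fully rigorous with an explicit constant. One clean route is: expand $\Psi_{\widehat f,\widehat R} - \Psi_{f^\star,R^\star}$ in its Taylor series around $0$ and note that each coefficient, including the linear term $\nabla\Psi(0)$, can be recovered by integrating the function against an explicit polynomial weight on $B_\nu^d$ (a Legendre-type quadrature identity, analogous to the mean-value-over-the-ball formula used for $\Delta^k$ in Theorem \ref{theo:radius}); Cauchy–Schwarz then converts this into the $L^2$ bound. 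Alternatively, one can mimic the Bessel argument coordinate by coordinate. Either way the uniformity over $\cal F$ is automatic because the constant only sees $\nu$, $d$, $\rho$, $S$ and the radius bounds, not the individual $f$. A secondary, minor point is checking that $\varepsilon_1$ having finite variance is genuinely what term $(\mathrm{I})$ needs (it is) and that it also justifies invoking Corollary \ref{cor:basic}.
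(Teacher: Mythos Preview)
Your decomposition into $(\mathrm{I})+(\mathrm{II})$ and the identification $(\mathrm{II})=-i\bigl(\nabla\Psi_{\widehat f,\widehat R}(0)-\nabla\Psi_{f^\star,R^\star}(0)\bigr)$ are correct, and treating $(\mathrm{I})$ via the CLT is exactly what the paper does. The gap is the interpolation step. A bound of the form $|\nabla h(0)|\le c\,\|h\|_{L^2(B_\nu^d)}$ with $c$ depending only on the $\Upsilon_{\rho,S}$ parameters is \emph{false}: given $|\partial_1 h(0)|=\alpha$ small and only the generic Taylor bounds $|a_j|\le C_j$, one can (by M\"untz) choose the higher coefficients so that $\|h\|_{L^2}/\alpha\to 0$. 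Your ``Legendre quadrature'' route makes this concrete: integrating $h$ against any fixed polynomial weight recovers $\nabla h(0)$ only up to a remainder controlled by the higher Taylor coefficients, and those are bounded by \emph{constants}, not by $\|h\|_{L^2}$; Cauchy--Schwarz then yields $|\nabla h(0)|\le c\|h\|_{L^2}+C$, which is useless. No $L^2$ weight does the job either, since $h\mapsto\partial_a h(0)$ is not $L^2$-continuous.

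The inequality \emph{is} true for the specific differences $h=\Psi_{f_1,R_1}-\Psi_{f_2,R_2}$, but only because of their structure, and the proof amounts to the paper's argument. In $d=2$, writing $aR_1-bR_2=(a-b)R_1+b(R_1-R_2)$ (with $a=(f_1)_1$, $b=(f_2)_1$), the $p=0$ angular Fourier mode of $h$ gives $|R_1-R_2|\lesssim\|h\|_{L^2}$ (this is Theorem~\ref{theo:radius}), and then the $p=1$ mode gives $|a-b|\lesssim\|h\|_{L^2}$ once the radius error is controlled (this is Proposition~\ref{prop:density} with $N=1$). That two-step route is exactly how the paper proves Theorem~\ref{theo:center}: it splits $\|\widehat C-C^\star\|$ into the empirical mean, $|\widehat R-R^\star|$, and $R_{\max}|\widehat f_1-f_1^\star|$, and invokes Theorem~\ref{theo:radius}, Proposition~\ref{prop:density}, and the CLT. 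Your packaging of $(\mathrm{II})$ as a gradient is elegant but does not buy a shortcut; you still need the radius bound first. Note also that the statement lives in Section~\ref{subsec:dens} where $d=2$; Proposition~\ref{prop:density} (on which the argument relies) is only available there.
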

Notice that we can not get exponential deviations for the empirical mean of the observations when nothing more is assumed about the noise apart having finite variance. Since the estimator of the center involves the empirical mean of the observations, we only prove tightness of $n^{1/2 - \delta}\left( \widehat{C}-C^{\star} \right)$.
\begin{proof}
Notice that 
$$
f^{\star}_{1}=\int_{0}^{1}\cos (2\pi u) f^{\star}(u) du + i \int_{0}^{1}\sin (2\pi u) f^{\star}(u) du
$$
so that
$$C^{\star} = \mathbb{E}[Y] -R^{\star}\begin{pmatrix} \text{Re}(f^{\star}_1) \\ \text{Im}(f^{\star}_1) \end{pmatrix},
$$
and in the same way
$$\widehat{C} = \frac{1}{n} \sum_{l=1}^n Y_l - \widehat{R} \begin{pmatrix} \text{Re}(\widehat{f}_1) \\ \text{Im}(\widehat{f}_1) \end{pmatrix}.
$$
Thus, using the triangle inequality, and the fact that
 $\widehat{R} \leq R_{\text{max}}$ and $|f^{\star}_{1}|\leq 1$,
 we get
$$\|\widehat{C} - C^{\star} \|_2 \leq \left\|\frac{1}{n} \sum_{l=1}^n Y_l - \mathbb{E}[Y] \right\|_2 + 
| \widehat{R} - R^{\star} | + R_{\text{max}} \left\| \begin{pmatrix} \text{Re}(f^{\star}_1) - \text{Re}(\widehat{f}_1)  \\ \text{Im}(f^{\star}_1) - \text{Im}(\widehat{f}_1) \end{pmatrix} \right\|_2.$$
The theorem follows from Theorem \ref{theo:radius}, Proposition \ref{prop:density} and the central limit theorem.

\end{proof}

\section{When the exploration density is known}
\label{sec:semipara}
In this section, we assume that $f^{\star}$ is known. By exchanging the role of the signal and of the noise, we can look at model \eqref{eq:model} as a semi-parametric deconvolution problem in which the noise has known distribution (up to centering and radius) on a sphere. But we are able to estimate the radius and the center without solving the semi-parametric deconvolution problem, that is without estimating $\mathbb{Q}$. We estimate the radius using the contrast function
$M_{n}(f^{\star},R)$. Since this function is continuous, we can define
$$
\widetilde{R}= {\text{ Argmin}} \;\{M_{n}(f^{\star},R),\;R\in [R_{\text{min}};R_{\text{max}}]\}.
$$
If moreover $\mathbb{Q}^{\star}$ has finite first moment and is a centered distribution, then the estimator of the center is defined as
$$
\widetilde{C}=\frac{1}{n} \sum_{i=1}^{n} Y_{i}-\widetilde{R}\int_{(0,1)^{d-1}}S(u)f^{\star}(u)du.
$$
Theorem \ref{theo:semipara} 
states that $\sqrt{n}(\widetilde{R}-R^{\star},\widetilde{C}-C^{\star})$   converges in distribution as $n$ tends to infinity to some centered Gaussian distribution. It will be a consequence of the lemma stated below.
In the following, for $R \in [R_{\text{min}}, R_{\text{max}}]$, we omit $f^{\star}$ as an argument of $M$ and $M_n$. We write $M'$, $M'_n$ their derivatives with respect to $R$ and $M''$, $M''_n$ their second derivatives with respect to $R$.
%
\begin{lemma}
\label{lem:convgauss}
The following results hold true under the assumptions of Theorem \ref{theo:semipara}. 
\begin{enumerate}
\item[(1)] $\widetilde{R}$ is a consistent estimator of $R^{\star}$.
\item[(2)] There exists a matrix $V$ such that $\sqrt{n}\left(\frac{1}{n}\sum_{i=1}^{n}Y_{i}-E(Y_{1}),  M'_n(R^{\star})\right)$  converges in distribution to a centered Gaussian distribution with variance V.
\item[(3)] $ M''(R^{\star}) \neq 0$ and for any random variable $R_n \in [R_{min},R_{max}]$ converging in probability to $R^{\star}$, one has 
$$M''_n(R_n) = M''(R^{\star}) + o_{\mathbb{P}_{C^{\star},R^{\star},f^{\star},\mathbb{Q}^{\star}}}(1).$$
\end{enumerate}
\end{lemma}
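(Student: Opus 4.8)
\textbf{Proof strategy for Lemma \ref{lem:convgauss}.}

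The plan is to establish the three items in order, since each feeds into a standard $M$-estimation/delta-method argument for Theorem \ref{theo:semipara}. For item (1), I would reuse the consistency machinery already invoked for Proposition \ref{prop:consi}: with $f^\star$ fixed and known, $M_n(R) \to M(R)$ uniformly on the compact interval $[R_{\text{min}},R_{\text{max}}]$ (this uniform convergence follows from the local $L^2$-consistency of $\tilde\psi_n$ together with continuity of $R \mapsto \Psi_{f^\star,R}$ and dominated convergence, exactly as in Section \ref{subsec:consi}), and the identifiability statement of Theorem \ref{theo:ident} restricted to the $R$-coordinate shows that $M(R) = 0$ if and only if $R = R^\star$, so $R^\star$ is the unique, well-separated minimizer of the limiting contrast. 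The argmin consistency theorem for $M$-estimators then gives $\widetilde{R} \xrightarrow{\mathbb{P}} R^\star$.

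For item (2), I would write both coordinates as empirical averages of i.i.d.\ bounded-variance functions of $Y_i$ and apply the multivariate CLT jointly. The first block $\frac1n\sum Y_i - E(Y_1)$ is immediate once $\mathbb{Q}^\star$ has finite second moment (so $Y_1$ does, $X_1$ being bounded). For $M'_n(R^\star)$ the key observation is that, since the integrand of $M_n$ is a smooth function of the $\tilde\psi_n$-values (which are themselves empirical means of bounded trigonometric terms) and of $R$ through the analytic functions $\Psi_{f^\star,R}$, differentiating under the integral sign in $R$ and expanding around $R^\star$ shows that $\sqrt n\, M'_n(R^\star)$ is, up to an $o_{\mathbb P}(1)$ term, a linear functional of $\sqrt n(\tilde\psi_n - \Phi_Y)$ evaluated on a fixed kernel — hence asymptotically a centered Gaussian, and jointly Gaussian with the first block because everything is a smooth transform of the same empirical process indexed by a fixed bounded set of frequencies. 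The fact that $M'(R^\star) = 0$ (as $R^\star$ is an interior minimizer of the smooth limit $M$) is what makes $\sqrt n M'_n(R^\star)$, rather than $\sqrt n (M'_n(R^\star)-M'(R^\star))$ plus a bias, the right object; I would verify this interiority/vanishing-derivative point explicitly.

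For item (3), $M''(R^\star) \neq 0$ is a local identifiability/non-degeneracy statement: since $M(R) \geq 0$, $M(R^\star)=0$, and $M$ is analytic in $R$, a zero second derivative would force $M$ to vanish to higher order at $R^\star$; I would rule this out by computing $M''(R^\star)$ in terms of $\partial_R \Psi_{f^\star,R}|_{R^\star}$ and showing the resulting integral of a nonnegative integrand against $|\Phi_\epsilon|^2$ is strictly positive, using that $\partial_R \Psi_{f^\star,R}(t)$ is not identically zero on $B_\nu^d$ (which follows from the analyticity and the explicit form \eqref{eq:psifR}, since $\partial_R\Psi_{f^\star,R}(t) = \int i t^\top S(u) e^{iRt^\top S(u)} f^\star(u)\,du$ is a nonzero analytic function of $t$). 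The uniform convergence $M''_n(R_n) = M''(R^\star) + o_{\mathbb P}(1)$ for any consistent $R_n$ then follows from uniform (in $R$ on $[R_{\text{min}},R_{\text{max}}]$) convergence of $M''_n$ to $M''$ together with continuity of $M''$ at $R^\star$ — the uniform convergence again being a consequence of the local $L^2$-consistency of $\tilde\psi_n$ and the smoothness of the integrand, since two derivatives in $R$ only bring down extra polynomially-bounded factors on the bounded domain $B_{\nu_{\text{est}}}^d$.

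The main obstacle I anticipate is item (3), specifically the proof that $M''(R^\star) \neq 0$: one must carefully differentiate the contrast twice, identify which terms survive at $R = R^\star$ (many vanish because $\Psi_{f^\star,R^\star}$ appears with the "true" value making the bracket in $M$ zero there), and then extract a genuinely positive quadratic form rather than something that could degenerate for special $f^\star$. Controlling the interchange of differentiation and integration and the uniform-in-$R$ convergence of $M''_n$ is routine given the boundedness of the domain and the analyticity in $R$, but the non-degeneracy argument needs the structural input that $t \mapsto \partial_R\Psi_{f^\star,R}(t)$ does not vanish identically, which is where the specific geometry of the sphere parametrization enters.
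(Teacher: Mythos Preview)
Your plan for items (1) and (2) matches the paper's approach: consistency via the same $M$-estimator argument as Proposition~\ref{prop:consi}, and the CLT for $\sqrt n\,M'_n(R^\star)$ by writing it as a continuous linear functional of the empirical process $\sqrt n(\tilde\psi_n-\Phi_Y)$ on a bounded frequency set, jointly with $\sqrt n(\bar Y_n-E Y_1)$. The uniform convergence $M''_n(R_n)\to M''(R^\star)$ is also handled as you describe.

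There is, however, a genuine gap in your argument for $M''(R^\star)\neq 0$. After differentiating twice and using $m_{R^\star}\equiv 0$, one indeed gets
\[
M''(R^\star)=2\int_{B_\nu\times B_\nu^{d-1}}\Bigl|\tfrac{d}{dR}m_{R^\star}(t_1,t_2)\Bigr|^2|\Phi_\epsilon(t_1,t_2)|^2\,dt_1dt_2,
\]
but the derivative appearing here is \emph{not} $\partial_R\Psi_{f^\star,R^\star}$: it is
\[
\tfrac{d}{dR}m_{R^\star}(t_1,t_2)=\partial_R\Psi(t_1,t_2)\,\Psi(t_1,0)\Psi(0,t_2)-\Psi(t_1,t_2)\bigl[\partial_R\Psi(t_1,0)\Psi(0,t_2)+\Psi(t_1,0)\partial_R\Psi(0,t_2)\bigr],
\]
with $\Psi=\Psi_{f^\star,R^\star}$. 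Knowing that $\partial_R\Psi\not\equiv 0$ (which is easy) does \emph{not} prevent this combination from vanishing identically: it vanishes exactly when $\partial_R\log\Psi(t_1,t_2)=\partial_R\log\Psi(t_1,0)+\partial_R\log\Psi(0,t_2)$, which encodes a factorization of $\Psi$, not a statement about $\partial_R\Psi$ alone. Your proposed structural input (nonvanishing of $\partial_R\Psi$) is therefore insufficient.

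The paper closes this gap with a substantially harder argument. Assuming $\tfrac{d}{dR}m_{R^\star}\equiv 0$, it extends the resulting identity analytically to all of $\C^d$, uses Lemma~\ref{lem:condi} to compare the zero sets of $\Psi(\cdot,0)$ and $\partial_R\Psi(\cdot,0)$, and then applies Hadamard's factorization theorem (both functions have exponential growth order~$1$) to produce entire functions $G,H$ with $\partial_R\Psi(t_1,t_2)=\Psi(t_1,t_2)(G(t_1)+H(t_2))$. Solving this along rays in $t$ forces $\Psi(t_1,t_2)$ to split as a product of a function of $t_1$ and a function of $t_2$, i.e.\ $\widetilde S^{(1)}(U)$ and $\widetilde S^{(2)}(U)$ would be independent --- which they are not for a spherical signal. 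This dependence (not merely the nontriviality of $\partial_R\Psi$) is the structural input you need.
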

The proof of Lemma \ref{lem:convgauss} is given in Section \ref{subsec:convgauss}.\\
Define the $(d+1)\times (d+1)$ matrix
$$
\Sigma=\begin{pmatrix}0 &-\frac{1}{M''(R^{\star})}\\
1 &\frac{E(S(U))}{M''(R^{\star})}\end{pmatrix}  V \begin{pmatrix}0 & 1\\
-\frac{1}{M''(R^{\star})} &\frac{E(S(U))^T }{M''(R^{\star})} \end{pmatrix}
$$
\begin{theorem}
\label{theo:semipara}
Assume that $\mathbb{Q}^{\star}$ has finite second moment and is a centered distribution.
Then $\sqrt{n}(\widetilde{R}-R^{\star},\widetilde{C}-C^{\star})$ converges in distribution to a centered Gaussian distribution with variance $\Sigma$.
\end{theorem}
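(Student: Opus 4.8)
The plan is to derive Theorem \ref{theo:semipara} from Lemma \ref{lem:convgauss} by the usual one‑step (delta‑method) argument for an $M$‑estimator defined through a smooth contrast, and then to read off the limiting covariance as a fixed linear image of $V$.

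First I would record that $R\mapsto M_n(R)$ and $R\mapsto M(R)$ are $C^2$ (indeed $C^\infty$) on $[R_{\text{min}},R_{\text{max}}]$: $\Psi_{f^\star,R}$ is analytic in $R$ and the integrals defining $M_n$ and $M$ run over the bounded region $B_{\nu_\text{est}}\times B_{\nu_\text{est}}^{d-1}$, so differentiation under the integral sign is legitimate. By part (1) of Lemma \ref{lem:convgauss}, $\widetilde R\to R^\star$ in $\mathbb{P}_{C^{\star},R^{\star},f^{\star},\mathbb{Q}^{\star}}$‑probability; since $R^\star$ is interior to $[R_{\text{min}},R_{\text{max}}]$, on an event of probability tending to $1$ the minimizer satisfies the first‑order condition $M_n'(\widetilde R)=0$. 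A mean–value Taylor expansion about $R^\star$ then yields $0=M_n'(R^\star)+M_n''(\bar R_n)(\widetilde R-R^\star)$ for some $\bar R_n$ lying between $R^\star$ and $\widetilde R$, hence $\bar R_n\to R^\star$ in probability. Part (3) gives $M_n''(\bar R_n)=M''(R^\star)+o_{\mathbb P}(1)$ with $M''(R^\star)\neq 0$, so $1/M_n''(\bar R_n)\to 1/M''(R^\star)$ in probability; since $\sqrt n\,M_n'(R^\star)$ converges in distribution by part (2) and is therefore tight, multiplying a tight sequence by an $o_{\mathbb P}(1)$ term gives
$$
\sqrt n\,(\widetilde R-R^\star)=-\frac{1}{M''(R^\star)}\,\sqrt n\,M_n'(R^\star)+o_{\mathbb P}(1).
$$

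Next I would treat the center. Writing $E(S(U))=\int_{(0,1)^{d-1}}S(u)f^\star(u)\,du$ and using that $\mathbb{Q}^\star$ is centered, one has $E(Y_1)=C^\star+R^\star E(S(U))$, so from the definition of $\widetilde C$,
$$
\widetilde C-C^\star=\Bigl(\tfrac1n\textstyle\sum_{i=1}^n Y_i-E(Y_1)\Bigr)-(\widetilde R-R^\star)\,E(S(U)).
$$
Substituting the expansion of $\sqrt n(\widetilde R-R^\star)$, the vector $\sqrt n(\widetilde R-R^\star,\widetilde C-C^\star)$ equals, up to an $o_{\mathbb P}(1)$ term, the fixed $(d+1)\times(d+1)$ linear map
$$
A=\begin{pmatrix}0&-\dfrac{1}{M''(R^\star)}\\[8pt]1&\dfrac{E(S(U))}{M''(R^\star)}\end{pmatrix}
$$
(with $0$ the $1\times d$ zero block and $1$ the $d\times d$ identity) applied to $\sqrt n\bigl(\tfrac1n\sum_i Y_i-E(Y_1),\,M_n'(R^\star)\bigr)$. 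By part (2) this last vector converges in distribution to $\mathcal N(0,V)$, so by Slutsky's lemma and the continuous mapping theorem $\sqrt n(\widetilde R-R^\star,\widetilde C-C^\star)\Rightarrow\mathcal N(0,AVA^\top)$, and $AVA^\top=\Sigma$ by inspection of the matrix displayed just before the theorem. This is exactly the assertion.

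The genuinely delicate points are all internal to Lemma \ref{lem:convgauss}: part (2), which is a central limit theorem for the joint vector formed by the empirical mean of the $Y_i$ and the $R$‑derivative at $R^\star$ of the integrated empirical contrast $M_n$ — this requires expanding $M_n'(R^\star)$ into an average of i.i.d.\ terms plus negligible remainders, a nontrivial step since $\tilde\psi_n$ enters $M_n$ through products of three factors so that one must control the cross terms — and the non‑degeneracy $M''(R^\star)\neq 0$ of part (3), which is where identifiability of $R^\star$ (Theorem \ref{theo:ident}) is used quantitatively. Granting the lemma, the passage to the theorem is the routine bookkeeping above; the only thing to watch is that each $o_{\mathbb P}(1)$ remainder is a product of a tight sequence with a sequence tending to $0$ in probability, hence negligible.
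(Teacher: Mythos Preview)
Your proof is correct and follows essentially the same route as the paper: consistency plus the first-order condition at an interior minimizer, a one-step Taylor expansion of $M_n'$ with Lemma \ref{lem:convgauss}(3) to replace $M_n''(\bar R_n)$ by $M''(R^\star)$, then writing $\sqrt n(\widetilde R-R^\star,\widetilde C-C^\star)$ as the fixed linear map $A$ applied to the jointly asymptotically normal vector from Lemma \ref{lem:convgauss}(2). If anything, your version is slightly more careful than the paper's in justifying $M_n'(\widetilde R)=0$ via the interior-point argument and in handling the $o_{\mathbb P}(1)$ remainders through tightness.
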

The proof of Theorem \ref{theo:semipara} is detailed in Section \ref{subsec:semipara}.

\section{Simulations}
\label{sec:simu}

The aim of this section is to illustrate our method with examples for which the noise is not bounded. We choose $d=2$ and we consider the model \eqref{eq:circle} with $R^{\star} = 3$, $C^{\star} = 0$ and $U, \varepsilon$ generated as follows.

\begin{enumerate}
    \item[(1)]
$U \sim \text{Unif}(0,1)$  and $\varepsilon \sim \mathcal{N}(0,(0.12)^2 I)$, figure \ref{figure1}
        \item[(2)] 
$U \sim \text{Unif}(0,1)$  and for $i \in \lbrace 1,2 \rbrace$  $\varepsilon^{(i)} \sim  \frac{1}{2} \delta_{(-1)} + \frac{1}{2} \mathcal{E}\text{xp}\left\{\frac{1}{0.12}\right\}$, figure \ref{figure2}
    \item[(3)]
$U \sim \text{Unif}(0,1)$  and $\varepsilon \sim \mathcal{N}(0,I)$, figure \ref{figure3}
    \item[(4)]
$U \sim f_U : x \in (0,1) \mapsto \frac{\exp\left\{\cos(2 \pi x)\right\}}{\int_{0}^1\exp\left\{\cos(2 \pi u)\right\} du} $  and $\varepsilon \sim \mathcal{N}(\begin{pmatrix} -1.6 \\ 2.5 \end{pmatrix},\begin{pmatrix} (0.2)^2 & 0 \\ 0 & (0.57)^2 \end{pmatrix})$, figure \ref{figure4}

\end{enumerate}

For each case, we generate $n$ observed points for $n \in V$ with 
$$ V = \lbrace 10^2,2 \cdot 10^2,3 \cdot 10^2 ,4 \cdot 10^2 ,5 \cdot 10^2,  10^3 ,2 \cdot 10^3,3 \cdot 10^3,5 \cdot 10^3, 10^4,5 \cdot 10^4,\\
7.5 \cdot 10^4 , 10^5,3 \cdot 10^5 ,5 \cdot 10^5,8 \cdot 10^5, 10^6 \rbrace.  $$
We estimate the radius of the circle in the case where the exploration density is known, and unknown.\\
In practice, when we want to estimate the radius, the choice of $N$ and $\nu$ does not significantly change the results thus the simulations are done with $N = \left \lfloor{\frac{\text{log}(n)}{\text{log(log}(n))}} \right \rfloor $ and $\nu = 0.5$.\\
For each figure, there are $4$ plots,

\begin{enumerate}
    \item[\textbf{Top left}] Scatter plot of the $10^6$ observed points.
    \item[\textbf{Top right}] Scatter plot of the $10^6$ observed points + the support of the signal $X$.
    \item[\textbf{Bottom left}] Plot of $(\text{log}|\widehat{R} - R^{\star}|, \text{log}(n))_{n \in V}$ + the linear regression, when the density $f^{\star}$ is known and unknown.
    \item[\textbf{Bottom right}] To avoid that the step size in $V$ is not constant and to better visualize the graph, we choose $W \subset V$ and we plot $(\widehat{R},n)_{n \in W}$, when the density $f^{\star}$ is known and unknown.
\end{enumerate}

\begin{figure}[H]
\centering
\includegraphics[width=0.65\textwidth]{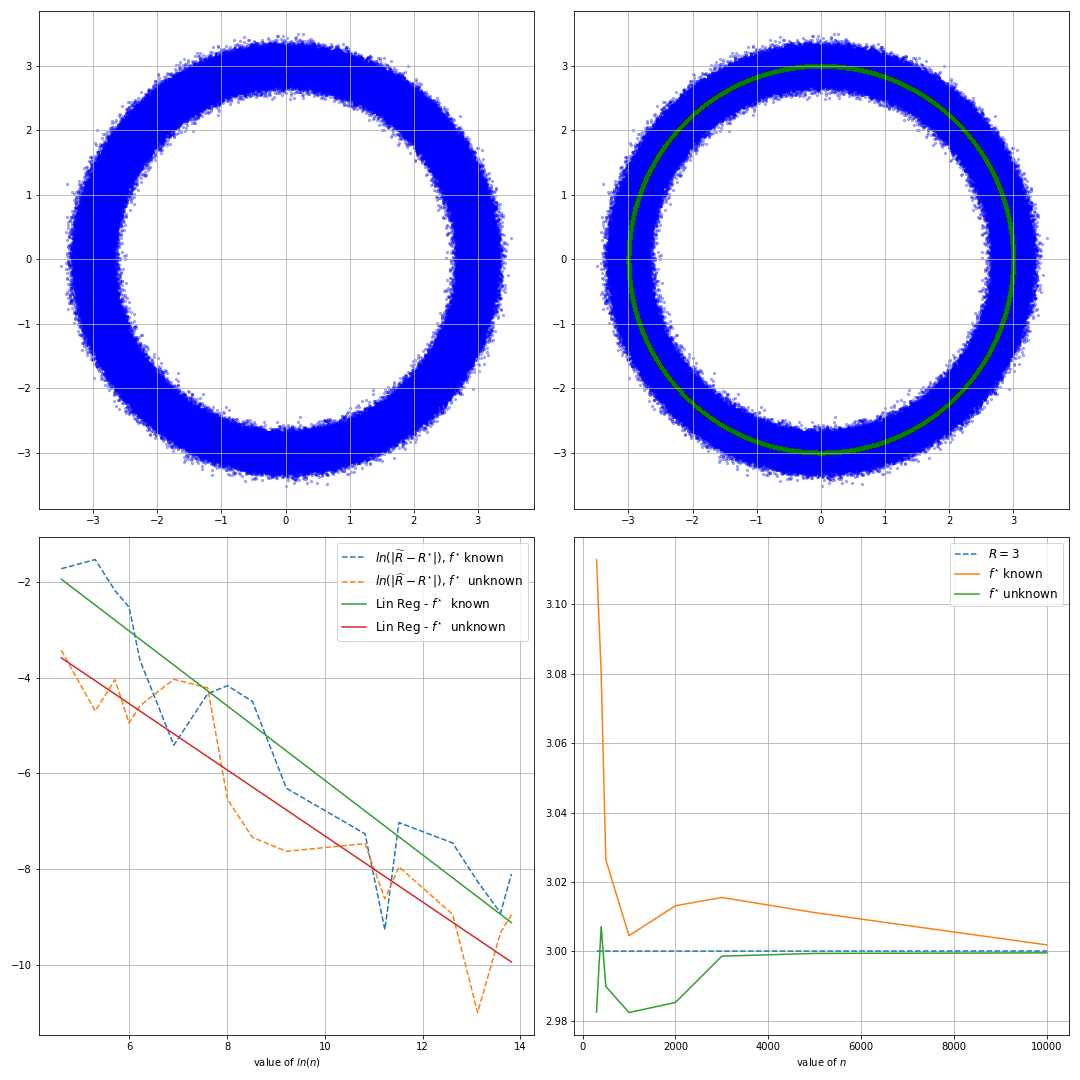}
\caption{\label{figure1} $U \sim \text{Unif}(0,1)$ , $\varepsilon \sim \mathcal{N}(0,(0.12)^2 I)$ and $W = \lbrace 3 \cdot 10^2 ,4 \cdot 10^2 ,5 \cdot 10^2,  10^3 ,2 \cdot 10^3,3 \cdot 10^3,5 \cdot 10^3, 10^4,5 \cdot 10^4, 7.5 \cdot 10^4 , 10^5 \rbrace $ }
\end{figure}

\begin{figure}[H]
\centering
\includegraphics[width=0.65\textwidth]{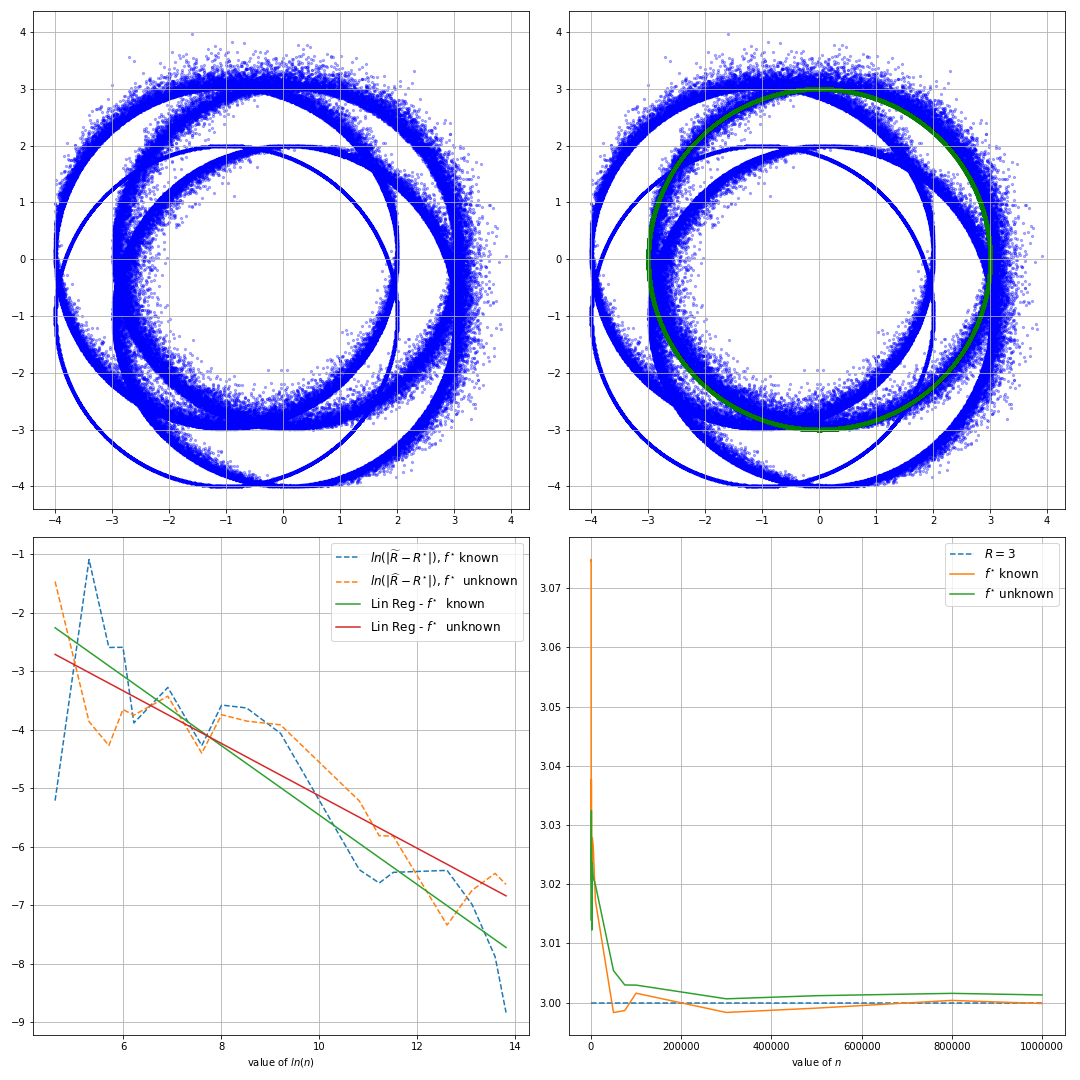}
\caption{\label{figure2} $U \sim \text{Unif}(0,1)$ , for $i \in \lbrace 1,2 \rbrace$  $\varepsilon^{(i)} \sim  \frac{1}{2} \delta_{(-1)} + \frac{1}{2} \mathcal{E}\text{xp}\left\{\frac{1}{0.12}\right\}$  and $W = \lbrace 3 \cdot 10^2 ,4 \cdot 10^2 ,5 \cdot 10^2,  10^3 ,2 \cdot 10^3,3 \cdot 10^3,5 \cdot 10^3, 10^4,5 \cdot 10^4, 7.5 \cdot 10^4 , 10^5,3 \cdot 10^5 ,5 \cdot 10^5,8 \cdot 10^5, 10^6 \rbrace $ }
\end{figure}

\begin{figure}[H]
\centering
\includegraphics[width=0.65\textwidth]{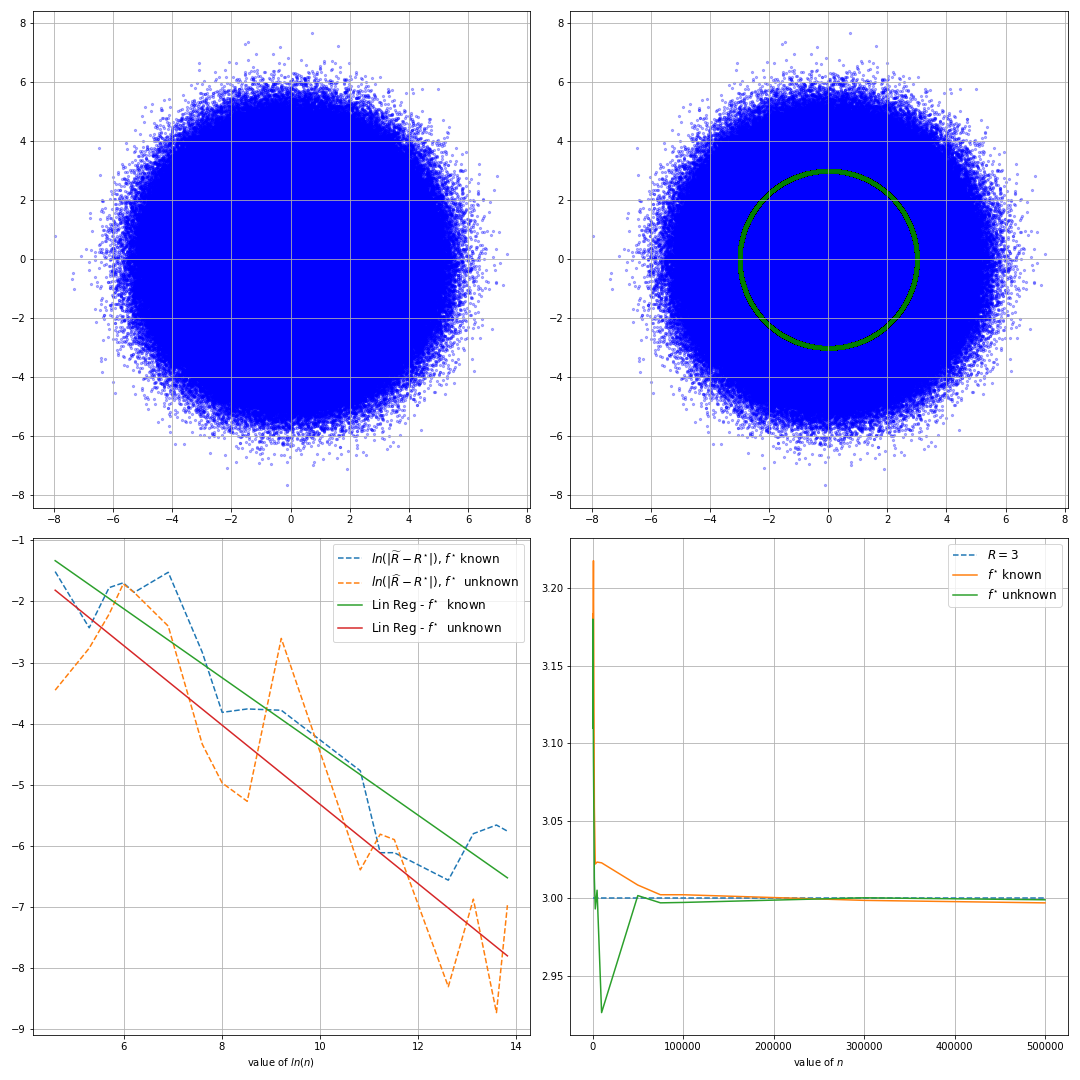}
\caption{\label{figure3} $U \sim \text{Unif}(0,1)$ , $\varepsilon \sim \mathcal{N}(0,I)$ and $W = \lbrace 3 \cdot 10^2 ,4 \cdot 10^2 ,5 \cdot 10^2,  10^3 ,2 \cdot 10^3,3 \cdot 10^3,5 \cdot 10^3, 10^4,5 \cdot 10^4, 7.5 \cdot 10^4 , 10^5,3 \cdot 10^5 ,5 \cdot 10^5 \rbrace$ }
\end{figure}

\begin{figure}[H]
\centering
\includegraphics[width=0.65\textwidth]{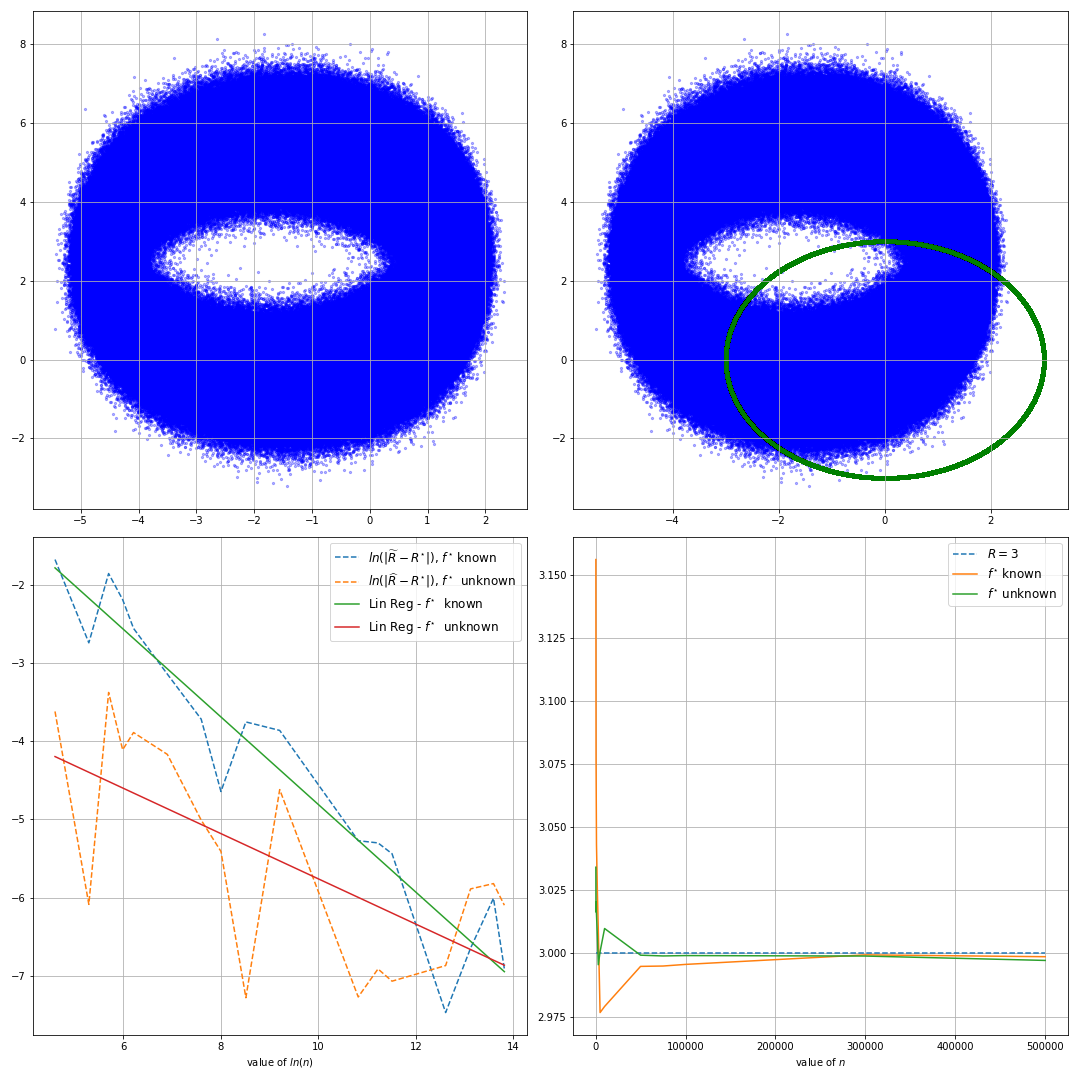}
\caption{\label{figure4} $U \sim f_U : x \in (0,1) \mapsto \frac{\exp\left\{\cos(2 \pi x)\right\}}{\int_{0}^1\exp\left\{\cos(2 \pi u)\right\} du} $ , $\varepsilon \sim \mathcal{N}(\begin{pmatrix} -1.6 \\ 2.5 \end{pmatrix},\begin{pmatrix} (0.2)^2 & 0 \\ 0 & (0.57)^2 \end{pmatrix})$ and $W = \lbrace 3 \cdot 10^2 ,4 \cdot 10^2 ,5 \cdot 10^2,  10^3 ,2 \cdot 10^3,3 \cdot 10^3,5 \cdot 10^3, 10^4,5 \cdot 10^4, 7.5 \cdot 10^4 , 10^5,3 \cdot 10^5 ,5 \cdot 10^5 \rbrace$ }
\end{figure}

The graph of $log|R^{\star} - \widehat{R}|$ of figure \ref{figure1}, \ref{figure2} and \ref{figure3} drive us to reasonably conjecture that the rate of convergence of $|R^{\star} - \widehat{R}|$ is the same when the density $f^{\star}$ is known and unknown.\\
We use Monte-Carlo to estimate $M_n$ and the package \textit{optimize.minimize} in \textit{Python} to minimize $M_n$, that is, there is possibly a numerical bias that can explain the fluctuations on the values of $\widehat{R}$ as we can see in the figure \ref{figure5} and figure \ref{figure6}. (left histogram) 
The histograms are computed with Monte-Carlo replications of $50$ values of $\widehat{R}$ for $n = 10.000$ in the case of figure \ref{figure1} and figure \ref{figure2}.

\begin{figure}[H]

\includegraphics[width=1.2\textwidth]{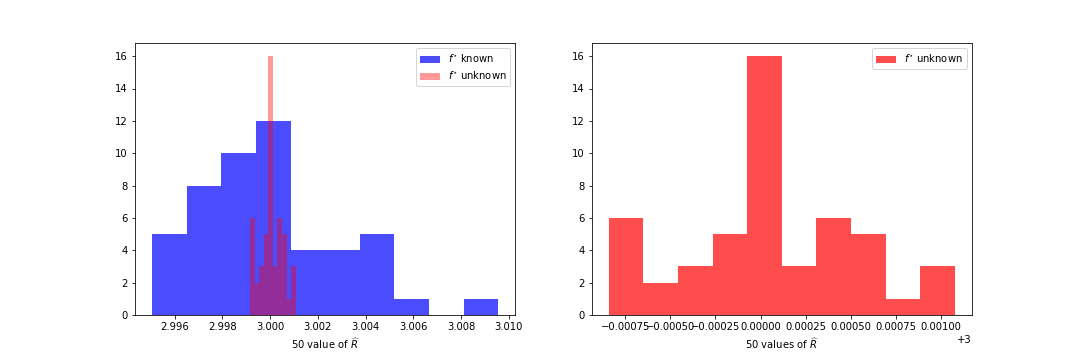}
\caption{\label{figure5} $U \sim \text{Unif}(0,1)$  and $\varepsilon \sim \mathcal{N}(0,(0.12)^2 I)$}
\end{figure}

\begin{figure}[H]
\begin{center}
\includegraphics[width=0.6\textwidth]{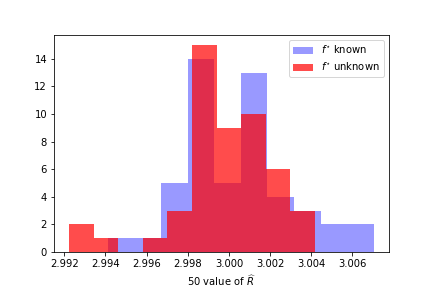}
\caption{\label{figure6} $U \sim \text{Unif}(0,1)$  and for $i \in \lbrace 1,2 \rbrace$  $\varepsilon^{(i)} \sim  \frac{1}{2} \delta_{(-1)} + \frac{1}{2} \mathcal{E}\text{xp}\left\{\frac{1}{0.12}\right\}$}
\end{center}
\end{figure}

Finally, for each $n \in V$ in the case of figure \ref{figure1}, we computed $30$ values of $\widehat{R}$ denoted by $(\widehat{R}^n_k)_{ 1 \leq k \leq 30}$ when the density $f^{\star}$ is known and unknown, we give the following table which gives the emperical mean squarred error.

\begin{center}
\renewcommand\arraystretch{1.4}
\[
\begin{tabular}{|c|c|c|}
  \hline
  \multirow{3}{*} {$n$} & $f^{\star}$ known & $f^{\star}$ unknown \\
  \cline{2-3}
  & $\frac{1}{30} \sum_{k=1}^{30} |R^{\star} - \widetilde{R}^n_k|^2$ & $ \frac{1}{30} \sum_{k=1}^{30} |R^{\star} - \widehat{R}^n_k|^2$ \\
  \hline
  $10^2$ & $7.09 \cdot 10^{-2}$ & $2.86 \cdot 10^{-3}$ \\
  $2 \cdot 10^2$ & $3.87 \cdot 10^{-2}$ & $1.02 \cdot 10^{-3}$ \\
  $3 \cdot 10^2$ & $1.87 \cdot 10^{-2}$ & $5.72 \cdot 10^{-4}$ \\
  $4 \cdot 10^2$ & $1.5 \cdot 10^{-2}$ & $6.15 \cdot 10^{-4}$ \\
  $5 \cdot 10^2$ & $7.28 \cdot 10^{-3}$ & $4.24 \cdot 10^{-4}$ \\
  $10^3$ & $1.27 \cdot 10^{-3}$ & $2.07 \cdot 10^{-4}$ \\
  $2 \cdot 10^3$ & $3.64 \cdot 10^{-4}$ & $9.97 \cdot 10^{-5}$ \\
  $3 \cdot 10^3$ & $1.39 \cdot 10^{-4}$ & $8.73 \cdot 10^{-5}$ \\
  $5 \cdot 10^3$ & $1.31 \cdot 10^{-4}$ & $5.29 \cdot 10^{-5}$ \\
  $10^4$ & $6.99 \cdot 10^{-5}$ & $6.63 \cdot 10^{-6}$\\
  $5 \cdot 10^4$ & $1.01 \cdot 10^{-5}$ & $4.96 \cdot 10^{-7}$ \\
  $7.5 \cdot 10^4$ & $7.75 \cdot 10^{-6}$ & $2.72 \cdot 10^{-7}$ \\
  $10^{5}$ & $7.63 \cdot 10^{-6}$ & $1.96 \cdot 10^{-7}$ \\
  $3 \cdot 10^5$ & $2.47 \cdot 10^{-6}$ & $6.03 \cdot 10^{-8}$ \\
  $5 \cdot 10^5$ & $1.88 \cdot 10^{-6}$ & $3.27 \cdot 10p^{-8}$ \\
  $8 \cdot 10^5$ & $1.37 \cdot 10^{-6}$ & $2.32 \cdot 10^{-8}$ \\
  $10^6$ & $1.02 \cdot 10^{-6}$ & $1.78 \cdot 10^{-8}$ \\
  \hline
\end{tabular}
\]
\end{center}


\section{Discussion}
\label{sec:discu}
In this paper, we proved that deconvolution of spherical data is possible without any knowledge of the distribution of the noise, and that the radius of the sphere can be recovered at nearly parametric rate. The question whether the rate $1/\sqrt{n}$ can be attained is still open. To get the almost parametric rate following the proposed analysis would require first to be able to strengthen the lower bound of $M$ in \eqref{eq:minoM}. But in \cite{LCGL2021}, getting a lower bound for $M$ requires delicate arguments involving a technical truncation from which it is not possible to derive a quadratic lower bound. If ever such a lower bound can be proved, new ideas have to be developed. 
Also, we were able to prove the identifiability theorem for all possible densities on a circle, but in higher dimensions the proof holds only for densities that are positive near the origin. Extending the result to hold for any density for any $d$ would be nice.
\\
We also proved, for noisy data on a circle that the exploration density can be recovered at the same minimax convergence rate on Sobolev regularity classes as when the noise distribution is known. The analysis we propose here does not extend to $d>2$, and the question of the convergence rate for $d>2$ remains unsolved.\\
More generally, deconvolution of data coming from observations supported on a lower dimensional manifold and corrupted by additive noise has been investigated earlier for known noise in \cite{GPVW12}, see also \cite{Brunel21}. The extension of the methodology proposed here to analyze those settings and to deal with unknown noise distribution will be developed in a further work. Understanding how to deal with noisy observations in topological data analysis is a challenging question,
see for instance \cite{AL19} and \cite{ACC17}, and our solution for additive noise having independent components can be understood as a contribution in this perspective. 

\section{Proofs}
\label{sec:proofs}

\subsection{Proof of Proposition \ref{prop:phihat}}
\label{subsec:phihat}
We shall denote $\|\cdot \|_{2,\nu}$ the ${\mathbb L}^{2}(B_{\nu}^m)$-norm and $\|\cdot \|_{\infty,\nu}$ the ${\mathbb L}^{\infty}(B_{\nu}^m)$-norm, where the dimension $m$ may be $d$, $d_1$ or $d_2$ and is clear from the context.\\
Following the proof of Proposition A.2 in \cite{LCGL2021} and the proof of Proposition 24 in Appendix B.5 in \cite{LCGLSup2021}, we easily get that 
there exist positive constants $b$, $\eta_{1}<1$ and $\eta_{2}<1$  depending only on 
$\nu$, $S$, $d$, $\rho$ such that for all $\Phi \in \Upsilon_{\rho,S}$, 
\begin{equation}
\label{eq:minoM}
\|h\|_{2,\nu}\leq \eta_{1}
\Longrightarrow
M(\Phi+h;\nu \vert \Phi) \geq c_{\nu}^{4}\|h\|_{2,\nu}^{2+2\epsilon (\|h\|_{2,\nu})},
\end{equation}
with, for any $u\in (0,1)$,
$$
\epsilon (u)=\frac{b}{\log \log \frac{1}{u}},
$$
and such that for any  
$t_{1}\in\R^{d_1}$ and any $t_{2}\in\R^{d_2}$,
\begin{eqnarray}
\label{eq:minohi}
\|h\|_{2,\nu}\leq \eta_{2}
&\Longrightarrow &
\|h (\cdot,0)\|_{2,\nu}^{2}\leq \|h\|_{2,\nu}^{2-2\epsilon (\|h\|_{2,\nu})}\nonumber\\
&&{\text{ and }}
\|h (0,\cdot)\|_{2,\nu}^{2}\leq \|h\|_{2,\nu}^{2-2\epsilon (\|h\|_{2,\nu})}.
\end{eqnarray}
We now fix $\eta  = \eta_{1}\wedge \eta_{2}$.
Let  $Z_{n}$ be the random process defined, for all $t=(t_{1},t_{2})\in\R^{d_1} \times \R^{d_2}$, by
$$
Z_{n}(t)=\sqrt{n}\left(\tilde{\phi}_{n}(t)-\Phi(t)\Phi_{\varepsilon^{(1)}}(t_1)\Phi_{\varepsilon^{(2)}}(t_2)  \right)
$$
Using explicit computation, straightforward upper bounds and \eqref{eq:minohi} we easily get that there exists a constant $C$ that depends only on  $\nu_{\text{est}}$, $\rho$ and $S$ such that if $h\in \Upsilon_{\rho,S}$ is such that $\|h\|_{2,\nu_{\text{est}}}\leq \eta $, then
\begin{eqnarray}
\label{eq:majooscil}
\left |\left(M_{n}(\Phi + h)-M(\Phi +h,\nu_{\text{est}}\vert \Phi)\right)\right.
&-&\left.\left(M_{n}(\Phi)-M(\Phi,\nu_{\text{est}}\vert \Phi)\right) \right|\\
& \leq  &C \left[\frac{\|Z_{n}\|_{\infty,\nu_{\text{est}}} }{\sqrt{n}}+\frac{\|Z_{n}\|^{2}_{\infty,\nu_{\text{est}}} }{n}\right]\cdot
\left[\|h\|_{2,\nu_{\text{est}}}^{1-\epsilon (\|h\|_{2,\nu_{\text{est}}})}+ \|h\|_{2,\nu_{\text{est}}}^{2-2\epsilon (\|h\|_{2,\nu_{\text{est}}})}\right]\nonumber\\
&\leq &3 C  \frac{\|Z_{n}\|_{\infty,\nu_{\text{est}}} }{\sqrt{n}}\cdot
\|h\|_{2,\nu_{\text{est}}}^{1-\epsilon (\|h\|_{2,\nu_{\text{est}}})}
\end{eqnarray}
since $\|Z_{n}\|_{\infty,\nu_{\text{est}}}\leq 2\sqrt{n}$ and $\|h\|_{2,\nu_{\text{est}}}\leq 1$.
We shall use the following deviation inequality which is proved  in Appendix G of  \cite{LCGLSup2021}. There exist a numerical constant $c$ and a constant $C$ that depends only on $d$, $\nu_{\text{est}}$ and $E(\|Y_{1}\|^2)$ such that for all $n\geq 1$ and $x>0$, with probability at least $1-4e^{-x}$,
\begin{equation}
\label{eq:deviaZn}
\|Z_{n}\|_{\infty,\nu_{\text{est}}} \leq C+c\sqrt{x}+c\frac{x}{\sqrt{n}}.
\end{equation}
The end of the proof follows from a classical peeling device. Let 
$\widehat{h}=\widehat{\phi}_{n}-\Phi$.
For any $a>0$, let $j_{0}$ the integer such that $2^{j_0}\leq a < 2^{j_0 +1}$, and $J_{n}(a)=\{j\in\N\;:j_{0}\leq j,\;2^{j  +1}\leq r_{n}\eta \}$. Then
$$
P\left(r_{n}\|\widehat{h}\|_{2,\nu}\geq a\right)
\leq P\left(\|\widehat{h}\|_{2,\nu}\geq \eta /2\right)+\sum_{j\in J_{n}(a)}
P\left(2^{j}\leq r_{n}\|\widehat{h}\|_{2,\nu}\leq 2^{j  +1}\right).
$$
Now, if $2^{j}\leq r_{n}\|\widehat{h}\|_{2,\nu}\leq 2^{j  +1}$, then
$$
\inf_{h:2^{j}\leq r_{n}\|h\|_{2,\nu}\leq 2^{j  +1}}M_{n}(\Phi + h)
\leq M_{n} (\Phi),
$$
which implies that
\begin{eqnarray*}
\sup_{h:2^{j}\leq r_{n}\|h\|_{2,\nu}\leq 2^{j  +1}}
\left |\left(M_{n}(\Phi + h)-M(\Phi +h,\nu_{\text{est}}\vert \Phi)\right)\right.
&-&\left.\left(M_{n}(\Phi)-M(\Phi,\nu_{\text{est}}\vert \Phi)\right) \right|\\
&\geq &  \inf_{h:2^{j}\leq r_{n}\|h\|_{2,\nu}\leq 2^{j  +1}, \|h\|_{2,\nu}\leq \eta (\delta)}M(\Phi+h,\nu_{\text{est}}\vert \Phi)\\
&\geq &  \inf_{h:2^{j}\leq r_{n}\|h\|_{2,\nu}\leq 2^{j  +1}, \|h\|_{2,\nu}\leq \eta (\delta)}M(\Phi+h,\nu\vert \Phi)
\end{eqnarray*}
where the last inequality follows from the fact that since $\nu\leq \nu_{\text{est}}$, 
$M(\cdot,\nu_{\text{est}}\vert \Phi)\geq M(\cdot,\nu\vert \Phi)$.
Using \eqref{eq:minoM} and \eqref{eq:majooscil} we then get that for $j\in J_{n}(a)$,
$$P\left(2^{j}\leq r_{n}\|\widehat{h}\|_{2,\nu}\leq 2^{j  +1}\right)
\leq
P\left(c_{\nu}^4 \left(\frac{2^{j}}{r_{n}}\right)^{2+2\epsilon(2^{j}/r_{n})}\leq 3C\frac{\|Z_{n}\|_{\infty,\nu_{\text{est}}} }{\sqrt{n}}\left(\frac{2^{j  +1}}{r_{n}}\right)^{1-\epsilon(2^{j+1}/r_{n})}
\right)
$$
so that
$$P\left(2^{j}\leq r_{n}\|\widehat{h}\|_{2,\nu}\leq 2^{j  +1}\right)
\leq
P\left(\|Z_{n}\|_{\infty,\nu_{\text{est}}} \geq \frac{c_{\nu}^4}{3C}2^{j} \sqrt{n}\left(\frac{1}{r_{n}}\right)^{1+3\epsilon(2^{j}/r_{n})}\right).
$$
Now, for $j\in J_{n}(a)$, $(2^{j}/r_{n}) \leq \eta$, and for $c(\eta)=3/\log \log (1/\eta )$ we have $3\epsilon(2^{j}/r_{n})\leq c(\eta)$. We then get for $j\in J_{n}(a)$, 
$$P\left(2^{j}\leq r_{n}\|\widehat{h}\|_{2,\nu}\leq 2^{j  +1}\right)
\leq
P \left(\|Z_{n}\|_{\infty,\nu_{\text{est}}} \geq \frac{c_{\nu}^4}{3C}2^{j} \sqrt{n}\left(\frac{1}{r_{n}}\right)^{1+c(\eta)}\right).
$$
We now take  $r_n$ such that $r_{n}^{1+c(\eta)}= \sqrt{n}$, that is 
$r_{n}=n^{1/2(1+c(\eta))}$, and we get
\begin{equation}
\label{eq:majoproba}
P\left(r_{n}\|\widehat{h}\|_{2,\nu}\geq a\right) \leq P\left(\|\widehat{h}\|_{2,\nu}\geq \eta /2\right)+\sum_{j\geq j_{0}}P\left(\|Z_{n}\|_{\infty,\nu_{\text{est}}} \geq C2^{j}
\right)
\end{equation}
for some constant $C$. 
To deal with the first term in the upper bound we use (25) in \cite{LCGL2021}.
The second term is a series  which is upper bounded using \eqref{eq:deviaZn} and Proposition \ref{prop:phihat} follows.

\subsection{Proof of Lemma \ref{lem:condi}}
\label{subsec:condi}
To begin with, for any $z_{0}\in\C$ and $z\in\C^{d-1}$,
$$
E\left[\exp\left(iz_{0}\widetilde{X}^{(1)}+iz^{T}\widetilde{X}^{(2)}\right)\right]=
E\left[E\left[\exp\left(iz_{0}\widetilde{X}^{(1)}\right)\vert \widetilde{X}^{(2)}\right]  \exp\left(iz^{T}\widetilde{X}^{(2)}\right)\right],
$$
and usual arguments for multivariate analytic functions on $\C^{d-1}$ prove that $z\mapsto E\left[\exp\left(iz_{0}\widetilde{X}^{(1)}+iz^{T}\widetilde{X}^{(2)}\right)\right]$ is the nul function if and only if $E\left[\exp\left(iz_{0}\widetilde{X}^{(1)}\right)\vert \widetilde{X}^{(2)}\right]$ is zero $P_{\widetilde{X}^{(2)}}$-a.s. In the same way,
for any  $z_{0}\in\C^{d-1}$, $z\mapsto E\left[\exp\left(iz\widetilde{X}^{(1)}+iz_{0}^{T}\widetilde{X}^{(2)}\right)\right]$  is the nul function if and only if $E\left[\exp\left(iz_{0}^T\widetilde{X}^{(2)}\right)\vert \widetilde{X}^{(1)}\right]$ is zero $P_{\widetilde{X}^{(1)}}$-a.s.
Also, the value of the center $C^{\star}$ can only change the function $E\left[\exp\left(z^T(\widetilde{X}^{(1)},\widetilde{X}^{(2)})\right)\right]$, $z\in \C^{d}$, by a factor $\exp(z^T C^{\star})$ which is non zero, so that 
we may assume $C^{\star}=0$ to prove Lemma \ref{lem:condi}.
\\
In the following, we write for all $u \in (0,1), \widetilde{S}^{(1)}(u) = cos(2 \pi u)$ and for all $u \in (0,1)^{d-1}$ , 
$$\widetilde{S}^{(2)}(u) = \begin{pmatrix} \sin(2 \pi u^{(1)}) \cos(\pi u^{(2)}) \\ \sin(2 \pi u^{(1)}) \sin( \pi u^{(2)}) \cos( \pi u^{(3)}) \\ \vdots \\ \sin(2 \pi u^{(1)}) \sin(\pi u^{(2)}) \cdots \sin(\pi u^{(d-2)}) \cos(\pi u^{(d-1)}) \\ \sin(2 \pi u^{(1)}) \sin(\pi u^{(2)}) \cdots \sin(\pi u^{(d-2)}) \sin(\pi u^{(d-1)})  \end{pmatrix}.$$
%
%
We first prove that for any $z_0 \in \mathbb{C}$, $E\left[\exp\left(iz_{0}\widetilde{X}^{(1)}\right)\vert \widetilde{X}^{(2)}\right]$ is not $P_{\widetilde{X}^{(2)}}$-a.s.  zero.
\\
%
%
Since $f^{\star}$ is not identically zero, there exists a closed interval $[\alpha, \beta]$ subset of one of the four following intervals : $(0,\frac{1}{4})$, $(\frac{1}{4}, \frac{1}{2})$, $(\frac{1}{2},\frac{3}{4})$, $(\frac{3}{4},1)$, a vector $a = (a^{(i)})_{1 \leq i \leq d-1} \in (\alpha, \beta) \times (0,1)^{d-2}$ (if $d=2$, $a$ is a real number in  $ (\alpha, \beta)$) and $\lambda > 0$ such that if we define 
$$I_1 = \lbrace u=(u^{(i)})_{i \in \lbrace 1, \ldots, d-1 \rbrace} \in (\alpha, \beta) \times (0,1)^{d-2} \; :  \; \left\| \widetilde{S}^{(2)}(u) - \widetilde{S}^{(2)}(a)\right\|^2_2 < \lambda^2 \rbrace,
$$ 
then the restriction of $f^{\star}$ to $I_{1}$, 
$f^{\star} \vert_{I_1}$, is not the null function.\\
We choose $\lambda$ small enough such that, if we define $A \subset (-1,1)^{d-1}$ as $A = \widetilde{S}^{(2)}(I_1)$, we have that there exists $I_2 \subset (0,1)^{d-1}$ such that $I_{1} \cap I_{2}=\emptyset$ and $(\widetilde{S}^{(2)})^{-1} (A) = I_1 \cup I_2$.\\
We define, for $i,j \in \lbrace 1,2 \rbrace$, the $\mathcal{C}^1$ diffeomorphisms $\eta_{i,j} : I_i \longrightarrow I_j$, $ u \mapsto (S^{(2)})^{-1}(S^{(2)}(u))$, such that $\eta_{i,i} = Id\vert_{I_i}$ and $\eta_{i,j} \circ \eta_{j,i} = Id \vert_{I_i}$.\\
Note that we can explicitly calculate $\eta_{i,j}(x)$ for the different possible inclusions of $[\alpha, \beta]$:
\begin{enumerate}
\item for
 $[\alpha, \beta] \subset (0,\frac{1}{4})$ or $[\alpha, \beta] \subset (\frac{1}{4},\frac{1}{2})$, we have $\eta_ {1,2}(u^{(1)},u^{(2)}, \ldots, u^{(d-1)}) = (\frac{1}{2}-u^{(1)}, u^{(2)}, \ldots, u^{(d-1)})$, 
 \item
and for $[\alpha, \beta] \subset (\frac{1}{2},\frac{3}{4})$ or $[\alpha, \beta] \subset (\frac{3}{4},1)$ , we have $\eta_{1,2}(u^{(1)}, u^{(2)}, \ldots, u^{(d-1)}) = (\frac{3}{2}-u^{(1)}, u^{(2)}, \ldots, u^{(d-1)})$. 
\end{enumerate}
 %
We now compute $E[\exp( iz_0 \widetilde{X}^{(1)} ) \vert \widetilde{X}^{(2)} ] 1_{\widetilde{X}^{(2)} \in A}$.
For any measurable bounded function 
$\omega$  on $(-R^{\star},R^{\star})^{d-1}$, we have
\begin{eqnarray*}
E[\exp(i z_0 \widetilde{X}^{(1)} ) \omega(\widetilde{X}^{(2)}) 1_{\widetilde{X}^{(2)}\in A}]
&=&
 \int_{(\widetilde{S}^{(2)})^{-1}(A)} \omega(R^{\star}\widetilde{S}^{(2)}(u)) \exp(iz_0 R^{\star} \cos (2 \pi u^{(1)})) f^{\star}(u) du\\
&=&\int_{I_1} \omega(R^{\star}\widetilde{S}^{(2)}(u)) \exp(i z_0 R^{\star} \cos(2 \pi u^{(1)})) f^{\star}(u) du \\
&&+ \int_{I_2} \omega(R^{\star}\widetilde{S}^{(2)}(u)) \exp(i z_0 R^{\star} \cos(2 \pi u^{(1)})) f^{\star}(u) du.
 \end{eqnarray*}
Define the  change of variables $u = \eta_{1,2}(v)$ in the second integral. Using the explicit definition of $\eta_{1,2}$ which is differentiable with Jacobian equal to $1$ we get
\begin{eqnarray*}
&&E[\exp(i z_0 \widetilde{X}^{(1)} ) \omega(\widetilde{X}^{(2)}) 1_{\widetilde{X}^{(2)}\in A}]\\
&&=
\int_{I_1} \omega(R^{\star}\widetilde{S}^{(2)}(u)) \lbrace \exp(i R^{\star} z_0 \cos(2 \pi u^{(1)})) f^{\star}(u) + \exp(i R^{\star} z_0 \cos(2 \pi\eta_{1,2}(u)^{(1)})) f^{\star}(\eta_{1,2}(u)) \rbrace du\\
&&= \int_{I_1} \omega(R^{\star}\widetilde{S}^{(2)}(u)) \frac{\exp(i R^{\star} z_0 \cos(2 \pi u^{(1)})) f^{\star}(u) + \exp(i R^{\star} z_0 \cos(2 \pi\eta_{1,2}(u)^{(1)})) f^{\star}(\eta_{1,2}(u)) }{f^{\star}(u) + f^{\star}(\eta_{1,2}(u))} f^{\star}(u) du \\
&& +\int_{I_1} \omega(R^{\star}\widetilde{S}^{(2)}(u)) \frac{ \exp(i R^{\star} z_0 \cos(2 \pi u^{(1)})) f^{\star}(u) + \exp(i R^{\star} z_0 \cos(2 \pi\eta_{1,2}(u)^{(1)})) f^{\star}(\eta_{1,2}(u)) }{f^{\star}(u) + f^{\star}(\eta_{1,2}(u))} f^{\star}(\eta_{1,2}(u)) du.
 \end{eqnarray*}
Thus if we define $\nu_{1}:A \longrightarrow I_{1}$ such that for all $u\in I_{1}$, $\nu_{1}(\widetilde{S}^{(2)}(u))=u$, and $\nu_{2}:A \longrightarrow I_{2}$ such that for all $u\in I_{2}$, $\nu_{2}(\widetilde{S}^{(2)}(u))=u$, we get
\begin{eqnarray*}
&&E\left[\exp(i z_0 \widetilde{X}^{(1)} ) \vert \widetilde{X}^{(2)}\right ] 1_{\widetilde{X}^{(2)} \in A}\\
&&= \frac{ \exp(i R^{\star} z_0 \cos(2 \pi\nu_{1}(\widetilde{X}^{(2)}))^{(1)}) f^{\star}(\nu_{1}(\widetilde{X}^{(2)})) + \exp(-i R^{\star} z_0 \cos(2 \pi\nu_{2}(\widetilde{X}^{(2)}))^{(1)}) f^{\star}(\nu_{2}(\widetilde{X}^{(2)})) }{f^{\star}(\nu_{1}(\widetilde{X}^{(2)})) + f^{\star}(\nu_{2}(\widetilde{X}^{(2)}))} 1_{\widetilde{X}^{(2)} \in A}.
\end{eqnarray*}
Finally, $E\left[\exp(i z_0 \widetilde{X}^{(1)} ) \vert \widetilde{X}^{(2)}\right ] 1_{\widetilde{X}^{(2)} \in A}$ is nul $\mathbb{P}_{\widetilde{X}^{(2)}}$-a.s if and only if  for $f^{\star}du$ almost all $u \in I_1$,
$$ \exp( i R^{\star} z_0 \cos(2 \pi u^{(1)})) f^{\star}(u) + \exp( i R^{\star} z_0 \cos(2 \pi \eta_{1,2}(u)^{(1)})) f^{\star}(\eta_{1,2}(u)) = 0,$$
that is  for $f^{\star}du$ almost all $u \in I_1$,
$$
 \exp\left( i R^{\star} z_0  \left(\cos(2 \pi u^{(1)})- \cos(2 \pi\eta_{1,2}(u)^{(1)}) \right)\right)=- \frac{f^{\star}(\eta_{1,2}(u))}{f^{\star}(u)}.
$$
Since for almost all $u \in I_1$, $f^{\star}(u) \neq 0$, this would imply in particular that for almost all $u \in I_1$

\begin{equation}
 R^{\star}  \text{Re}(z_0)  \left(\cos(2 \pi u^{(1)}) - \cos(2 \pi\eta_{1,2}(u)^{(1)}) \right) = \pi\;\; [\text{mod} 2 \pi],
\label{eq:X1null}
\end{equation}
which gives a contradiction.
\\

Now, let's prove that for any $z_0 \in \mathbb{C}^{d-1}$, $E\left[\exp\left(iz_{0}^T\widetilde{X}^{(2)}\right)\vert \widetilde{X}^{(1)}\right]$ is not $P_{\widetilde{X}^{(1)}}$-a.s. zero.\\
Since $f^{\star}$ is not identically zero, there exists a closed interval $[\alpha, \beta]$ in one of the four following intervals : $(0,\frac{1}{4})$, $(\frac{1}{4}, \frac{1}{2})$, $(\frac{1}{2},\frac{3}{4})$, $(\frac{3}{4},1)$, such that $f|_{(\alpha, \beta) \times (0,1)^{d-2}}$ is not the null function. \\
Define $J_1 = (\alpha, \beta)$, $B = \widetilde{S}^{(1)}(J_1)$, and $J_2$ such that $J_2 \cap J_1 =\emptyset$ and $(\widetilde{S}^{(1)})^{-1}(B) = J_1 \cup J_2$. We define the $\mathcal{C}^1$ diffeomorphism $\sigma_{1,2} : J_1 \longrightarrow J_2$, $ u \mapsto (\widetilde{S}^{(1)})^{-1}(\widetilde{S}^{(1)}(u))$, such that $\sigma_{1,1} = Id|_{I_1}$ and $\sigma_{1,2} \circ \sigma_{2,1} = Id|_{I_1}$. Note that we can explicitly calculate $\sigma_{1,2}(u)$, indeed, for $u \in J_1$, we have $\sigma_{1,2}(u) = 1-u $. The reason of choosing $J_1$ in one of these four intervals is to have the decomposition of $(\widetilde{S}^{(1)})^{-1}(B)$ in exactly 2 disjoint open sets on which $\widetilde{S}^{(1)}$ is one to one.
 \\
For any bounded and measurable function $\omega$ on $(-R^{\star},R^{\star})$, we have
\begin{eqnarray*}
E\left[\exp\left(iz_{0}^T\widetilde{X}^{(2)}\right) \omega(\widetilde{X}^{(1)}) 1_{\widetilde{X}^{(1)} \in B}\right] 
& =&\int_{J_1 \times (0,1)^{d-2}} \omega(R^{\star}\cos(2 \pi u^{(1)})) \exp(iR^{\star} z_0^{\intercal} \widetilde{S}^{(2)}(u)) f^{\star}(u) du\\
&& + \int_{J_2 \times (0,1)^{d-2}} \omega(R^{\star}\cos(2 \pi u^{(1)})) \exp(i R^{\star}z_0^{\intercal} \widetilde{S}^{(2)}(u)) f^{\star}(u) du.
\end{eqnarray*}
Define the following change of variables for the second integral, $u = \overline{\sigma}(v)=(\sigma_{1,2} \times Id_{{(0,1)}^{d-2}})(v)$, which has Jacobien equal to 1. Then
\begin{eqnarray*}
&&E\left[\exp\left(iz_{0}^T\widetilde{X}^{(2)}\right) \omega(\widetilde{X}^{(1)}) 1_{\widetilde{X}^{(1)} \in B}\right]\\
&& = \int_{J_1 \times (0,1)^{d-2}}  \omega(R^{\star}\cos(2 \pi u^{(1)})) \lbrace \exp(i R^{\star}z_0^{\intercal} \widetilde{S}^{(2)}(u)) f^{\star}(u)
+ \exp(iR^{\star} z_0^{\intercal} \widetilde{S}^{(2)}(\overline{\sigma}(u))) f^{\star}(\overline{\sigma}(u)) \rbrace du\\
&&= \int_{J_1 \times (0,1)^{d-2}}  \omega(R^{\star}\cos(2 \pi u^{(1)})) \frac{   \exp(i R^{\star} z_0^{\intercal} \widetilde{S}^{(2)}(u)) f^{\star}(u)
+ \exp(iR^{\star} z_0^{\intercal} \widetilde{S}^{(2)}(\overline{\sigma}(u))) f^{\star}(\overline{\sigma}(u)) }{f^{\star}(u) + f^{\star}(\overline{\sigma}(u))}f^{\star}(u)du\\
&& + \int_{J_1 \times (0,1)^{d-2}}  \omega(R^{\star}\cos(2 \pi u^{(1)})) \frac{ \exp(i R^{\star} z_0^{\intercal} \widetilde{S}^{(2)}(u)) f^{\star}(u)
+ \exp(i R^{\star} z_0^{\intercal} \widetilde{S}^{(2)}(\overline{\sigma}(u))) f^{\star}(\overline{\sigma}(u)) }{f^{\star}(u) + f^{\star}(\overline{\sigma}(u))}f^{\star}(\overline{\sigma}(u))du.
\end{eqnarray*}
We now define $\tau_{1}:B \longrightarrow J_{1}$ such that for all $u\in J_{1}$, $\tau_{1}(\widetilde{S}^{(1)}(u))=u$, and $\tau_{2}:B \longrightarrow J_{2}$ such that for all $u\in J_{2}$, $\tau_{2}(\widetilde{S}^{(1)}(u))=u$.\\
First when $d=2$, we get
\begin{eqnarray*}
&&E\left[\exp\left(iz_{0}^T\widetilde{X}^{(2)}\right) \vert \widetilde{X}^{(1)} \right]1_{\widetilde{X}^{(1)} \in B}\\
&&=\frac{ \exp(iR^{\star} z_0^{\intercal} \widetilde{S}^{(2)}({\tau}_{1}(\frac{\widetilde{X}^{(1)}}{R^{\star}}))) f^{\star}({\tau}_{1}(\frac{\widetilde{X}^{(1)}}{R^{\star}}))
+ \exp(-i R^{\star} z_0^{\intercal} \widetilde{S}^{(2)}({\tau}_{2}(\frac{\widetilde{X}^{(1)}}{R^{\star}}))) f^{\star}({\tau}_{2}(\frac{\widetilde{X}^{(1)}}{R^{\star}})) }
{f^{\star}({\tau}_{1}(\frac{\widetilde{X}^{(1)}}{R^{\star}}))+f^{\star}({\tau}_{2}(\frac{\widetilde{X}^{(1)}}{R^{\star}}))}1_{\widetilde{X}^{(1)} \in B}
\end{eqnarray*}
and $E\left[\exp\left(iz_{0}^T\widetilde{X}^{(2)}\right) \vert \widetilde{X}^{(1)} \right]$ can not be nul $\mathbb{P}_{\widetilde{X}^{(1)}}$-a.s. since it would require that for almost all $(u,v)\in J_1 \times (0,1)^{d-2}$,
$$
\exp(iR^{\star} z_0^{\intercal} \widetilde{S}^{(2)}(u,v)) f^{\star}(u,v)
+ \exp(i R^{\star} z_0^{\intercal} \widetilde{S}^{(2)}(1-u,v)) f^{\star}(1-u,v) =0.
$$
Then when $d>2$, we can choose $J_{1}=(0,\gamma]$
\begin{eqnarray*}
&&E\left[\exp\left(iz_{0}^T\widetilde{X}^{(2)}\right) \vert \widetilde{X}^{(1)} \right]1_{\widetilde{X}^{(1)} \in B}\\
&&=\left(\int_{(0,1)^{d-2}}\frac{ \exp(iR^{\star} z_0^{\intercal} \widetilde{S}^{(2)}({\tau}_{1}(\frac{\widetilde{X}^{(1)}}{R^{\star}}),v)) f^{\star}({\tau}_{1}(\frac{\widetilde{X}^{(1)}}{R^{\star}}),u)
+ \exp(-i R^{\star} z_0^{\intercal} \widetilde{S}^{(2)}({\tau}_{2}(\frac{\widetilde{X}^{(1)}}{R^{\star}}),v)) f^{\star}({\tau}_{2}(\frac{\widetilde{X}^{(1)}}{R^{\star}}),u) }
{f^{\star}({\tau}_{1}(\frac{\widetilde{X}^{(1)}}{R^{\star}}),u)+f^{\star}({\tau}_{2}(\frac{\widetilde{X}^{(1)}}{R^{\star}}),u)}du\right)1_{\widetilde{X}^{(1)} \in B}
\end{eqnarray*}
and $E\left[\exp\left(iz_{0}^T\widetilde{X}^{(2)}\right) \vert \widetilde{X}^{(1)} \right]1_{\widetilde{X}^{(1)} \in B}$ is nul $\mathbb{P}_{\widetilde{X}^{(1)}}$-a.s. if and only if for all $u\in J_{1}$,
$$
\int_{(0,1)^{d-2}}\frac{ \exp(iR^{\star} z_0^{\intercal} \widetilde{S}^{(2)}(u,v)) f^{\star}(u,v)
+ \exp(i R^{\star} z_0^{\intercal} \widetilde{S}^{(2)}(1-u,v)) f^{\star}(1-u,v) }
{f^{\star}(u,v)+f^{\star}(1-u,v)}dv =0.
$$
In particular, this implies that for all $u\in J_{1}$,
$$
\int_{(0,1)^{d-2}}
\cos({\text{Re}}(z_0)^{\intercal} \widetilde{S}^{(2)}(u,v)) \frac{ [f^{\star}(u,v) \exp(-R^{\star} {\text{Im}}(z_0)^{\intercal} \widetilde{S}^{(2)}(u,v))
+f^{\star}(1-u,v) \exp(R^{\star} {\text{Im}}(z_0)^{\intercal} \widetilde{S}^{(2)}(u,v))] }
{f^{\star}(u,v)+f^{\star}(1-u,v)}dv =0.
$$
But for small enough $u\in J_{1}$, $\cos({\text{Re}}(z_0)^{\intercal} \widetilde{S}^{(2)}(u,v))$ stays positive for all $v\in (0,1)^{d-2}$ which gives a contradiction.

\subsection{Proof of Proposition \ref{prop:consi}}
\label{subsec:consi}

For $f,f^{\star} \in \mathcal{F}$ and $R,R^{\star} \in [R_{\text{min}},R_{\text{max}}]$, denote $\theta = (f,R)$, $\theta^{\star} = (f^{\star},R^{\star})$, and define the distance $d$ by  $d(\theta, \theta^{\star}) = \left(\int_{(0,1)^{d-1}}(f(u)-f^{\star}(u))^2 du\right)^{1/2}  + |R - R^{\star}|$. \\
First, using Lemma A.1 in \cite{LCGL2021} we get
\begin{equation}
\label{eq:consi:1}
\sup_{\theta \in \mathcal{F}\times [R_{\text{min}}, R_{\text{max}}]}\left| M_n(\theta) - M(\theta)\right| = o_{\mathbb{P}_{C^{\star},R^{\star},f^{\star},\mathbb{Q}^{\star}}}(1).
\end{equation}
Then, using the continuity of $M$ with respect to the distance $d$ and the compacity of $\mathcal{F}\times [R_{\text{min}}, R_{\text{max}}]$, using Theorem \ref{theo:ident} we get that for any $\delta > 0$, 
\begin{equation}
\label{eq:consi:2}
\inf_{ \theta \in \mathcal{F} \times [R_{\text{min}}, R_{\text{max}}] , \  d(\theta,\theta^{\star}) > \delta}M(\theta) > M(\theta^{\star}) = 0.
\end{equation}
Consistency of $\widehat{f}$ and $\widehat{R}$ follows from \eqref{eq:consi:1}, \eqref{eq:consi:2} 
and Theorem 5.7
in \cite{vdV98}. Consistency of $\widehat{C}$ is then a consequence of the continuity theorem and the law of large numbers.

\subsection{Proof of proposition \ref{prop:density}}
\label{subsec:propdensity}

The functions $\Psi_{f,R}$ on $\R^2$ can be written as functions $\Phi_{f,R}$ on $[0,+\infty[\times  [0,1)$ using polar representation. For any $r\geq 0$ and $\theta\in [0,1)$, define
$$
\Phi_{f,R}(r,\theta)=\Psi_{f,R}(r \cos (2 \pi \theta), r\sin (2 \pi \theta)).
$$
For all $r \geq 0$, let $(\lambda_p(r))_{p \in \mathbb{Z}}$ be the sequence of Fourier coefficients of $\Phi_{f,R}(r,\cdot)$, 
$$\lambda_p(r) = \int_0^{1} \Phi_{f,R}(r,u) e^{2 i \pi p u} du, \ \ p \in \mathbb{Z}.$$
Using (III) in Section \ref{sec:Bessel}, we get that for all $r \geq 0$,
\begin{eqnarray*} 
\Phi_{f,R}(r,\theta)& = &\int_0^{1} f (u) \text{exp}(i r R  cos(2 \pi u - 2 \pi \theta))du\\
& = &\sum_{p \in \mathbb{Z}} \sum_{k  \in \mathbb{Z}} i^p J_p(r R ) f_k (\int_{0}^{1}  e^{-2 i \pi u(k-p)}du)e^{- 2 i \pi p \theta}\\
&= &\sum_{p \in \mathbb{Z}} i^p f_p J_p(r R) e^{- 2 i \pi p \theta},
\end{eqnarray*}
so that for all $p \in \mathbb{Z}$,
$$ \lambda_p(r) = i^p J_p(r R ) f_p,$$
and also
$$\lambda_p^{\star}(r) = i^p J_p(r R^{\star} )f_p^{\star}, \ \ \ \widehat{\lambda}_p(r) = i^p J_p(r \widehat{R}) \widehat{f}_p,$$
where $(f_p^{\star})_{p \in \mathbb{Z}}$ (resp. $(\widehat{f}_p)_{p \in \mathbb{Z}}$) are the Fourier coefficients of $f^{\star}$(resp. $\hat{f}$) and $(\widehat{\lambda}_p(r))_{p \in \mathbb{Z}}$ are the Fourier coefficients of $\Phi_{\widehat{f},\widehat{R}}(r,.)$. 
We have, using Parseval's identity,
\begin{eqnarray*} 
\int_{0}^{1} |\Phi_{f^{\star},R^{\star}}(r,\theta) - \Phi_{\widehat{f},\widehat{R}}(r,\theta)|^2 d\theta &=&  \sum_{k \in \mathbb{Z}} |\lambda_k^{\star}(r) - \widehat{\lambda}_k(r)|^2 \\
& = &\sum_{k \in \mathbb{Z}} |f_k^{\star} J_k(rR^{\star}) - \widehat{f}_k J_k(r\widehat{R})|^2 \\
&\geq & \sum_{|k| \leq N} |f_k^{\star} J_k(rR^{\star}) - \widehat{f}_k J_k(r \widehat{R})|^2. 
\end{eqnarray*}
We use the fact that $ |a - b|^2 \geq \frac{|a|^2}{2} - |b|^2$ for all $a,b \in \mathbb{C}$, to get
$$ \int_{0}^{1} |\Phi_{f^{\star},R^{\star}}(r,\theta) - \Phi_{\widehat{f},\widehat{R}}(r,\theta)|^2 d\theta \geq  \sum_{|k| \leq N} \frac{|f_k^{\star} - \widehat{f}_k|^2}{2} J_k(r\widehat{R})^2 - \sum_{|k| \leq N} |f^{\star}_k|^2 |J_k(rR^{\star})-J_k(r\widehat{R})|^2,$$
so that
$$ \sum_{|k| \leq N} |f_k^{\star} - \widehat{f}_k|^2 J_k(r\widehat{R})^2 \leq 2 \int_{0}^{1} |\Phi_{f^{\star},R^{\star}}(r,\theta) - \Phi_{\widehat{f},\widehat{R}}(r,\theta)|^2 d\theta + \sum_{|k| \leq N} |f_k^{\star}|^2 |J_k(rR^{\star})-J_k(r\widehat{R})|^2.$$
Then, 
for all $\nu\in(0,\nu_\text{est}]$ such that $c_{\nu}^{\star}>0$, we integrate from $0$ to $\nu$ and we use (IV) in Section \ref{sec:Bessel} to obtain

$$ \sum_{|k| \leq N} |f_k^{\star} - \widehat{f}_k|^2 \int_{0}^{\nu} r J_k(r\widehat{R})^2 dr \leq 2 ||\Psi_{f^{\star},R^{\star}} - \Psi_{\widehat{f},\widehat{R}}||^2_{\mathbb{L}_2(\mathbb{D}_2(0,\nu))} + 2 \sum_{|k| \leq N} |f_k^{\star}|^2 |R^{\star}-\widehat{R}|^2 \int_{0}^{\nu} r^2 dr.$$
Using Proposition \ref{prop:phihat}, Theorem \ref{theo:radius} and the fact that $ \sum_{|k| \leq N} |f_k^{\star}|^2 \leq \int_{0}^{1}(f^{\star}(u))^2 du  $ is uniformly upper bounded in the compact set $\cal F$, we have that there exists a constant $c > 0$ depending on $\delta$, $\nu$, $c_{\nu}^{\star}$, $d$, $R^{\star}$, $R_{\text{min}}$, $R_{\text{max}}$, and $E(\|Y\|^2)$ such that for all $x \geq 1$ and for $c_1$ and $c_2$ coming from Proposition \ref{prop:phihat}, for all $n\geq (1\vee xc_{1})/c_{2}$,
with probability at least $1 - e^{-x}$,
$$\sum_{|k| \leq N} |f_k - \widehat{f}_k|^2 \int_{0}^{\nu} r J_k(r\widehat{R})^2 dr \leq c \left(\frac{x}{n^{1-\delta}} \vee \frac{x^2}{n^{2-2\delta}} \right).$$
Using lemma \ref{lemmabessel}, we finally have that with probability at least $1-e^{-x}$,
$$ \sum_{|k| \leq N} |f_k^{\star} - \widehat{f}_k|^2 \leq c \frac{32}{9 \nu^2} (\nu \widehat{R})^{-2N} (N+1) 2^{2N} (N!)^2 \left(\frac{x}{n^{1-\delta}} \vee \frac{x^2}{n^{2-2\delta}} \right).$$
We finally use $\widehat{R} \in [R_{\text{min}}, R_{\text{max}}]$ to end the proof.

\subsection{Proof of lemma \ref{lem:convgauss}}
\label{subsec:convgauss}
The proof of (1) follows from the same arguments as in the proof of Proposition \ref{prop:consi}.
\\
For all $(t_1,t_2) \in \R\times \R^{d-1}$, define
$$ m_{n,R}(t_1,t_2) = \Psi_{f^{\star},R}(t_1,t_2)\tilde{\psi}_n(t_1,0) \tilde{\psi}_n(0,t_2) - \tilde{\psi}_n(t_1,t_2) \Psi_{f^{\star},R}(t_1,0) \Psi_{f^{\star},R}(0,t_2)$$
and
$$m_{R}(t_1,t_2) = \Psi_{f^{\star},R}(t_1,t_2) \Psi_{f^{\star},R^{\star}}(t_1 ,0) \Psi_{f^{\star},R^{\star}}(0 , t_2) - \Psi_{f^{\star},R^{\star}}(t_1,t_2) \Psi_{f^{\star},R}(t_1 ,0) \Psi_{f^{\star},R}(0,t_2),$$
such that $M_n(R) = \int_{B_{\nu_{\text{est}}}\times B_{\nu_{\text{est}}}^{d-1}} |m_{n,R}(t_1,t_2)|^2 dt_1 dt_2$ and $M(R) = \int_{B_{\nu}\times B_{\nu}^{d-1}} |m_{R}(t_1,t_2)|^2 |\Phi_{\epsilon}(t_1, t_2)|^2 dt_1 dt_2$. 

Let us prove (2). Differentiation of $M_n$ gives
$$  M'_n(R) = \int_{B_{\nu_{\text{est}}}\times B_{\nu_{\text{est}}}^{d-1}} \left(\frac{d}{dR} \lbrace m_{n,R}(t_1,t_2) \rbrace \overline{m_{n,R}(t_1,t_2)} + \frac{d}{dR} \lbrace \overline{m_{n,R}(t_1, t_2)} \rbrace m_{n,R}(t_1,t_2) \right) dt_1 dt_2, $$
where $\overline{z}$ denotes the complex conjugate of $z$. Since $\overline{m_{n,R}(t_1,t_2)} = m_{n,R}(-t_1,-t_2)$ we get
$$M'_n(R) = \int_{B_{\nu_{\text{est}}}\times B_{\nu_{\text{est}}}^{d-1}} \left(\frac{d}{dR} \lbrace m_{n,R}(t_1,t_2) \rbrace m_{n,R}(-t_1,-t_2) + \frac{d}{dR} \lbrace m_{n,R}(-t_1,-t_2) \rbrace m_{n,R}(t_1,t_2) \right) dt_1 dt_2.$$
Let $\textbf{Z}_n$ be the random process defined for $(t_1,t_2) \in \mathbb{R} \times \mathbb{R}^{d-1}$ by 
\begin{equation}
\textbf{Z}_n(t_1,t_2) = \sqrt{n} \left ( \tilde{\psi}_n(t_1,t_2) - \Psi_{f^{\star},R^{\star},}(t_1,t_2)  \Phi_{\epsilon}(t_1,t_2) \right ).
\label{proccessZ}
\end{equation}
The random process 
$\textbf{Z}_n$ converges weakly to a Gaussian process $(\textbf{Z}(t_1,t_2))_{(t_1,t_2)}$ in the set of complex continuous functions endowed with the uniform norm.\\
Using \eqref{proccessZ},  $\tilde{\psi}_n(t_1,t_2) = \frac{1}{\sqrt{n}} \textbf{Z}_n(t_1,t_2) + \Psi_{f^{\star},R^{\star}}(t_1,t_2) \Phi_{\epsilon}(t_1,t_2)$, so that
\begin{eqnarray*}
 \sqrt{n} M'_n(R^{\star}) &= &\int_{B_{\nu_{\text{est}}}\times B_{\nu_{\text{est}}}^{d-1}} C(t_1,t_2) \left\{ 
 \Psi_{f^{\star},R^{\star}}(-t_1,-t_2) \left[ \textbf{Z}_n(-t_1,0) \Psi_{f^{\star},R^{\star}}(0, -t_2) + \textbf{Z}_n(0, -t_2) \Psi_{f^{\star},R^{\star}}(- t_1, 0) \right]\right.\\
 &&\left.-
 \textbf{Z}_n(-t_1,-t_2) \Psi_{f^{\star},R^{\star}}(- t_1,0) \Psi_{f^{\star},R^{\star}}(0,- t_2) \right\} dt_{1}dt_{2}
\\
&& + \int_{B_{\nu_{\text{est}}}\times B_{\nu_{\text{est}}}^{d-1}} C(-t_1, -t_2)  \left\{ 
 \Psi_{f^{\star},R^{\star}}(t_1,t_2) \left[ \textbf{Z}_n(t_1,0) \Psi_{f^{\star},R^{\star}}(0, t_2) + \textbf{Z}_n(0, t_2) \Psi_{f^{\star},R^{\star}}(t_1, 0) \right] \right.\\
 &&\left.-
 \textbf{Z}_n(t_1,t_2) \Psi_{f^{\star},R^{\star}}(t_1,0) \Psi_{f^{\star},R^{\star}}(0,t_2)
 \right\} dt_1 dt_2 + O_{\mathbb{P}}(\frac{1}{\sqrt{n}})
\end{eqnarray*}
where all $O_{\mathbb{P}}$ (and later $o_{\mathbb{P}}$) are in $\mathbb{P}_{C^{\star},R^{\star},f^{\star},\mathbb{Q}^{\star}}$ probability. Now, the empirical process converges uniformly in distribution to a Gaussian process over the set of functions $\{Id, \exp(it^T\cdot), |t|\leq \nu_{\text{est}}\}$, so that
 $\sqrt{n}\left(\frac{1}{n}\sum_{i=1}^{n}Y_{i}-E(Y_{1}),  M'_n(R^{\star})\right)$ converges in distribution to $\mathcal{N}(0,V)$ as $n$ goes to infinity for $V$ the covariance matrix of the random vector.

Let us now prove (3).
Twice differentiation of $M$ gives
\begin{eqnarray*}
M''(R) &= &\int_{B_{\nu}\times B_{\nu}^{d-1}} |\Phi_{\epsilon}(t_1,t_2)|^2 \left( \frac{d^2}{dR^2}  m_R(t_1,t_2) m_R(-t_1,-t_2) + 2 \frac{d}{dR} m_R(t_1,t_2)\right.\\
&&\left.\frac{d}{dR}m_R(-t_1,-t_2) + \frac{d^2}{dR^2} m_R(-t_1,-t_2) m_R(t_1,t_2) \right)  dt_1 dt_2.
\end{eqnarray*}
But $m_{R^{\star}}(t_1,t_2) = 0$ for all $(t_1,t_2)$, so that
$$ M''(R^{\star}) = 
 2 \int_{B_{\nu}\times B_{\nu}^{d-1}} \left \vert \frac{d}{dR} m_{R^{\star}}(t_1,t_2) \right \vert^2 |\Phi_{\epsilon}(t_1,t_2)|^2 dt_1 dt_2 .$$
We shall prove $M''(R^{\star}) \neq 0$ by contradiction.\\
If it is not the case, we have, for almost all $(t_1,t_2) \in B_{\nu}\times B_{\nu}^{d-1}$, $\frac{d}{dR} m_{R^{\star}}(t_1,t_2) \Phi_{\epsilon}(t_1,t_2) = 0$.
Now, there exists $r_{\epsilon} \in ( 0,\nu) $ such that for all $(t_{1},t_{2})\in B_{r_{\epsilon}}\times B_{r_{\epsilon}}^{d-1}$, $\Phi_{\epsilon}(t_1,t_2)\neq 0$. Since $\frac{d}{dR}m_{R^{\star}} $ is a continuous function on $\mathbb{C}^d$
 we get  
 $\frac{d}{dR} m_{R^{\star}}(t_1,t_2) = 0$ for all $(t_1,t_2) \in B_{r_{\epsilon}}\times B_{r_{\epsilon}}^{d-1}$, that is
 \begin{eqnarray} 
 \frac{d}{dR} \Psi_{f^{\star},R^{\star}}(t_1,t_2) \Psi_{f^{\star},R^{\star}}(t_1,0) \Psi_{f^{\star},R^{\star}}(0,t_2) 
 &=&\Psi_{f^{\star},R^{\star}}(t_1,t_2) \frac{d}{dR} \Psi_{f^{\star},R^{\star}}(t_1, 0) \Psi_{f^{\star},R^{\star}}(0,t_2) \nonumber\\
& +& \Psi_{f^{\star},R^{\star}}(t_1,t_2) \Psi_{f^{\star},R^{\star}}(t_1, 0) \frac{d}{dR} \Psi_{f^{\star},R^{\star}}(0,t_2),
\label{eq:dm}
\end{eqnarray}
 with
 $$ \Psi_{f^{\star},R^{\star}}(t_1,t_2) = \int_{(0,1)^{d-1}} f^{\star}(u) \exp(i R^{\star}  t^{\intercal} S(u) ) du $$
 and
 $$\frac{d}{dR} \Psi_{f^{\star},R^{\star}}(t_1,t_2) = i \int_{(0,1)^{d-1}} [t^{\intercal} S(u) ] f^{\star}(u) e^{i R^{\star} t^{\intercal} S(u)} du. $$
 But $\Psi_{f^{\star},R^{\star}}$ and $\frac{d}{dR} \Psi_{f^{\star},R^{\star}}$ are multivariate analytic functions, so that, using Lemma C.1 in \cite{LCGLSup2021}, we have that \eqref{eq:dm} holds for all $(t_1,t_2) \in \C \times \C^{d-1}$. We shall now investigate the set of zeros of the functions  $\Psi_{f^{\star},R^{\star}}(\cdot,0)$ and $\frac{d}{dR} \Psi_{f^{\star},R^{\star}}(\cdot, 0)$. Let $t_{1}\in\C$ be such that $\Psi_{f^{\star},R^{\star}}(t_1,0)=0$. Then by Lemma \ref{lem:condi} it is possible to choose $t_{2}\in \C^{d-1}$ such that $\Psi_{f^{\star},R^{\star}}(t_1,t_2)\neq 0$, and also such that 
 $\Psi_{f^{\star},R^{\star}}(0,t_2)\neq 0$ since $\Psi_{f^{\star},R^{\star}}(0,\cdot)$ is a multivariate analytic function having only isolated zeros. Equation \eqref{eq:dm} then leads to 
 $\frac{d}{dR} \Psi_{f^{\star},R^{\star}}(t_{1}, 0)=0$ so that the set of zeros of the function  $\Psi_{f^{\star},R^{\star}}(\cdot,0)$ is a subset of that of the function $\frac{d}{dR} \Psi_{f^{\star},R^{\star}}(\cdot, 0)$. 
 Then, using Hadamard's factorization theorem (see \cite{Stein:complex} Chapter 4, Theorem 4.1), and the fact that $\Psi_{f^{\star},R^{\star}}(\cdot,0)$ and $\frac{d}{dR} \Psi_{f^{\star},R^{\star}}(\cdot, 0)$ have exponential growth order $1$,
we get that there exists an entire function $G$ of exponential growth order $1$ such that for any $t_{1}\in\C$,
$$
\frac{d}{dR} \Psi_{f^{\star},R^{\star}}(t_{1}, 0)=\Psi_{f^{\star},R^{\star}}(t_{1},0)G(t_{1}).
$$
Plugging into \eqref{eq:dm} we get that for all $(t_1,t_2) \in \C \times \C^{d-1}$,
$$
\frac{d}{dR} \Psi_{f^{\star},R^{\star}}(t_1,t_2)  \Psi_{f^{\star},R^{\star}}(0,t_2) 
 =\Psi_{f^{\star},R^{\star}}(t_1,t_2) G(t_{1}) \Psi_{f^{\star},R^{\star}}(0,t_2) 
 + \Psi_{f^{\star},R^{\star}}(t_1,t_2)  \frac{d}{dR} \Psi_{f^{\star},R^{\star}}(0,t_2).
 $$
The same arguments applied for each coordinate of $t_2$ gives that there exists a multivariate anlytic function $H$ such that
for any $t_{2}\in\C^{d-1}$,
$$
\frac{d}{dR} \Psi_{f^{\star},R^{\star}}(0,t_{2})=\Psi_{f^{\star},R^{\star}}(0,t_{2})H(t_{2}),
$$
so that 
 for all $(t_1,t_2) \in \C \times \C^{d-1}$,
 \begin{equation}
 \label{eq:dm2}
\frac{d}{dR} \Psi_{f^{\star},R^{\star}}(t_1,t_2)   
 =\Psi_{f^{\star},R^{\star}}(t_1,t_2) \left(G(t_{1})
 +H(t_{2})\right). 
 \end{equation}
But for any $t\in \C^{d}$,
$$\frac{d}{dR} \Psi_{f^{\star},R^{\star}}(t)=\frac{1}{R}\frac{d}{du}\Psi_{f^{\star},R^{\star}}(ut),\;u\in\R
$$
so that solving the derivative equation \eqref{eq:dm2} we find that 
$\Psi_{f^{\star},R^{\star}}(t_1,t_2)$ is a product of a function of $t_1$ only by a function of $t_2$ only, meaning that
$S^{(1)} (U)$ and $S^{(2)} (U)$ are independent variables, which is not true and we get a contradiction.
We conclude that $M''(R^{\star}) \neq 0$. 

To end the proof of (3), for all $R \in [0,+\infty[$, 
\begin{eqnarray*}
 M_n''(R) - M_n''(R^{\star}) &= &\int_{B_{\nu_{\text{est}}}\times B_{\nu_{\text{est}}}^{d-1}} |\tilde{\psi}_n(t_1,t_2)|^2 [a_1(t_1,t_2,R) - a_1(t_1,t_2,R^{\star})] dt_1 dt_2 \\
&& + \int_{B_{\nu_{\text{est}}}\times B_{\nu_{\text{est}}}^{d-1}} |\tilde{\psi} _n(t_1,0)|^2 |\tilde{\psi}_n(0,t_2)|^2 [a_2(t_1,t_2,R) - a_2(t_1,t_2,R^{\star})] dt_1 dt_2\\
 &&+ \text{Re} \Bigg\lbrace \int_{B_{\nu_{\text{est}}}\times B_{\nu_{\text{est}}}^{d-1}} \tilde{\psi} _n(-t_1,t_2) \tilde{\psi} _n(t_1,0) \tilde{\psi} _n(0,t_2) [a_3(t_1,t_2,R) - a_3(t_1,t_2,R^{\star})] dt_1, dt_2 \Bigg\rbrace ,
\end{eqnarray*}
for functions $a_1$, $a_2$ and $a_3$ functions that are, for all $(t_1,t_2)$, continuous in the variable $R$ and uniformly upper bounded for bounded $R$. Since for all $(t_1,t_2)$, $|\tilde{\psi}_n(t_1,t_2)|\leq 1$, we get that $\left|M_n''(R) - M_n''(R^{\star})\right|$ is upper bounded by
$$
\int_{B_{\nu_{\text{est}}}\times B_{\nu_{\text{est}}}^{d-1}}
\left(\left|a_1(t_1,t_2,R) - a_1(t_1,t_2,R^{\star})\right|
+\left|a_2(t_1,t_2,R) - a_2(t_1,t_2,R^{\star})\right|+\left|a_3(t_1,t_2,R) - a_3(t_1,t_2,R^{\star})\right|
\right)dt_1, dt_2
$$
from which, applying the continuity theorem,  we deduce that $M_n''(R_n) - M_n''(R^{\star})$ converges in probability to $0$ whenever $R_n$ is a random variable converging in probability to $R^{\star}$. Then, for any random variable $R_n \in [R_{\text{min}},R_{\text{max}}]$ converging in probability to $R^{\star}$,  $M''_n(R_n)$ converges in probability to $M''(R^{\star})$.

\subsection{Proof of Theorem \ref{theo:semipara}}
\label{subsec:semipara}

Using Taylor expansion of $M'_n$ near $R^{\star}$, there exists $R_n \in (\widetilde{R},R^{\star})$ such that
$$ M'_n(\widetilde{R}) = M'_n(R^{\star}) + (R_n - R^{\star}) M''_n(R_n).$$
Using $M'_n(\widetilde{R}) = 0$ and Lemma \ref{lem:convgauss} we get $M''_n(R_n) =  M''(R^{\star}) + o_{\mathbb{P}}(1) $, so that 
$$ \sqrt{n} (\widetilde{R} - R^{\star}) = -\sqrt{n} \frac{M'_n(R^{\star})}{ M''(R^{\star})} (1 + o_{\mathbb{P}}(1)).$$
We deduce that
$$ \sqrt{n} \begin{pmatrix} \widetilde{R} - R^{\star} \\ \widetilde{C} - C^{\star} \end{pmatrix} = \begin{pmatrix} 0 \\ \sqrt{n}(\frac{1}{n} \sum_{l=1}^n Y_l - \mathbb{E}[Y_1]) \end{pmatrix} - \begin{pmatrix} 1 \\ - \mathbb{E}[ S(U) ] \end{pmatrix}  \frac{ \sqrt{n} M'_n(R^{\star})}{ M''(R^{\star})}(1 + o_{\mathbb{P}}(1))
$$
and the conclusion follows.

\section{Results on Bessel functions}
\label{sec:Bessel}

Denote $J_{\alpha}$ the Bessel function of order $\alpha \in [0, + \infty [$.\\
We shall use the following  results that can be found in \cite{Watson44}.

\begin{enumerate}

\item[(I)] The Bessel function of order $\alpha \in [0, + \infty [$ can be represented as

$$ J_{\alpha}(z) = \sum_{m = 0}^{\infty} (-1)^m \frac{(z/2)^{\alpha +2m}}{m! \Gamma(\alpha + m + 1)}, \ \ \ z \in \mathbb{C},$$

where for all $z \in ]0, + \infty[$, $\Gamma(z) = \int_{- \infty}^{+ \infty} t^{z-1} e^{-t} dt$. \\

\item[(II)] For $k \geq 0$ and $z \in \mathbb{C}$

$$J_{-k}(z) = (-1)^k J_k(z).$$

\item[(III)] For $z \in \mathbb{C}$ and $\theta  \in \mathbb{R}$

$$\text{exp}(izcos(\theta)) = \sum_{k \in \mathbb{Z}} i^k J_k(z) e^{-ik\theta}.$$

\item[(IV)] For $k \geq 0$ and $x,y > 0$

$$ |J_k(x) - J_k(y)| \leq |x-y|. $$

Indeed, since, $J_k \in \mathcal{C}^1(0,+\infty)$, for $k>0$, $J_k'(z) = \frac{1}{2}(J_{k-1}(z) - J_{k+1}(z))$, $J_0'(z) = -J_1(z)$ and $|J_k(x)| \leq 1$.

\item[(V)] For $\alpha \geq 1$ and $x >0$

$$ J_{\alpha+1}(x) = \frac{2 \alpha}{x}J_{\alpha}(x) - J_{\alpha -1}(x).$$

\end{enumerate}

We prove lemmas giving useful lower bounds.

\begin{lemma}\label{bessel1}

For all $0 \leq x < 1$, for all $\alpha \in [0, + \infty[$, we have

$$ J_{\alpha}(x) \geq \frac{x^{\alpha}}{2^{\alpha} \Gamma(\alpha + 1)} (1 - \frac{x^2}{4(\alpha+1)}) .$$

\end{lemma}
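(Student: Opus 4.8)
The plan is to work directly from the series representation (I) of $J_\alpha$ in Section~\ref{sec:Bessel}. Factoring out the leading monomial, one writes
$$
J_\alpha(x) = \frac{(x/2)^\alpha}{\Gamma(\alpha+1)}\sum_{m=0}^\infty (-1)^m a_m, \qquad a_m := \frac{(x/2)^{2m}\,\Gamma(\alpha+1)}{m!\,\Gamma(\alpha+m+1)},
$$
so that, using $\Gamma(\alpha+2)=(\alpha+1)\Gamma(\alpha+1)$, we have $a_0 = 1$ and $a_1 = (x/2)^2/(\alpha+1) = x^2/(4(\alpha+1))$. Proving the lemma then amounts to showing that the tail $\sum_{m\ge 2}(-1)^m a_m$ is nonnegative, because that yields $J_\alpha(x)\ge \frac{(x/2)^\alpha}{\Gamma(\alpha+1)}(a_0-a_1)$, which is exactly the claimed bound $\frac{x^\alpha}{2^\alpha\Gamma(\alpha+1)}(1-\frac{x^2}{4(\alpha+1)})$.

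First I would verify that $(a_m)_{m\ge 1}$ is strictly decreasing when $0\le x<1$: the ratio
$$
\frac{a_{m+1}}{a_m} = \frac{(x/2)^2}{(m+1)(\alpha+m+1)}
$$
is bounded by $(x/2)^2 < 1/4 < 1$ for every integer $m\ge 1$ and every $\alpha\ge 0$, so $a_{m+1}<a_m$; moreover $a_m\to 0$. Hence $\sum_{m\ge 2}(-1)^m a_m = a_2-a_3+a_4-\cdots$ is an alternating series whose terms decrease monotonically to $0$, so its partial sums bracket its sum and in particular the sum is at least $a_2-a_3\ge 0$. This gives $\sum_{m\ge 2}(-1)^m a_m\ge 0$, and the lemma follows. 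One also notes $a_0-a_1 = 1-x^2/(4(\alpha+1))>0$ on this range, so the bound is not vacuous, though this plays no role in the inequality itself.

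I do not anticipate any real obstacle: the one point requiring care is the justification that the tail alternating series is nonnegative, namely the uniform estimate $a_{m+1}/a_m<1$ for all $m\ge 1$ together with $a_m\to 0$, which is where the hypothesis $x<1$ enters (in fact $x<2$ would already suffice for $m\ge 1$). The remaining ingredients — absolute convergence of the Bessel series, which legitimises the factorisation and the rearrangement, and the explicit evaluation of $a_0$ and $a_1$ — are routine.
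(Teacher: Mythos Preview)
Your proof is correct and follows essentially the same approach as the paper: both factor out the leading monomial from the series (I), identify the first two terms as $1$ and $x^2/(4(\alpha+1))$, and argue that the remaining alternating tail $\sum_{m\ge 2}(-1)^m a_m$ is nonnegative. Your write-up is in fact slightly more careful, since you explicitly verify the monotone decrease of $a_m$ via the ratio $a_{m+1}/a_m<1$, whereas the paper simply asserts the nonnegativity of the tail.
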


\begin{proof}

Let  $0 \leq x < 1$, for $\alpha \geq 0$, we have $J_{\alpha}(x) = \sum_{m = 0}^{\infty} (-1)^m \frac{(x/2)^{\alpha+2m}}{m! \Gamma(\alpha + m + 1)}$, so, if we expand the sum using that $\Gamma(x + 1) = x \Gamma(x)$ for $x > 0$ :

$$ J_{\alpha}(x) = \frac{x^{\alpha}}{2^{\alpha} \Gamma(\alpha + 1)}(1 - \frac{x^{2}}{4 (\alpha+1)} + \sum_{m \geq 2}^{\infty}  (-1)^m \frac{(x/2)^{2m}}{m!(\alpha+1) \cdots (\alpha+m)}).$$

Since $ 0 \leq x < 1$, we have $\sum_{m = 2}^{\infty}  (-1)^m \frac{(x/2)^{2m}}{m!(\alpha+1) \cdots (\alpha+m)} \geq 0$ thus ,

$$ J_k(x) \geq \frac{x^{\alpha}}{2^{\alpha} \Gamma(\alpha + 1)}(1 - \frac{x^{2}}{4 (\alpha+1)}).$$

\end{proof}

\begin{lemma}\label{lemmabessel}

For all $R>0$ and $0 < \nu < 1/R$, for all $0 \leq k \leq N$, we have :

$$ \int_{0}^{\nu} r J_k(rR)^2 dr \geq \frac{9 \nu^2}{32} \frac{(\nu R)^{2N}}{(N+1) 2^{2N} (N!)^2}.$$

\end{lemma}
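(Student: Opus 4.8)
The plan is to reduce the statement to a pointwise lower bound on $J_k(rR)$ valid on all of $[0,\nu]$, coming directly from Lemma \ref{bessel1}, then integrate against $r\,dr$, and finally compare the resulting $k$-dependent bound with the claimed $N$-dependent one using only $\nu R<1$ and $k\le N$.

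First I would observe that since $0\le r\le\nu$ and $\nu<1/R$ we have $0\le rR<1$, so Lemma \ref{bessel1} applies with $x=rR$ and $\alpha=k$, giving
$$
J_k(rR)\ \ge\ \frac{(rR)^k}{2^k\,k!}\left(1-\frac{(rR)^2}{4(k+1)}\right).
$$
Because $(rR)^2<1\le k+1$, the parenthesis is at least $\tfrac34$, hence $J_k(rR)\ge \tfrac34\,\dfrac{(rR)^k}{2^k k!}>0$ for every $r\in[0,\nu]$. Squaring (the right-hand side being nonnegative) yields $J_k(rR)^2\ge \tfrac{9}{16}\,\dfrac{(rR)^{2k}}{2^{2k}(k!)^2}$.

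Next I would integrate this inequality in $r$ over $[0,\nu]$, using $\int_0^\nu r^{2k+1}\,dr=\nu^{2k+2}/(2k+2)$:
$$
\int_0^\nu r\,J_k(rR)^2\,dr\ \ge\ \frac{9}{16}\,\frac{R^{2k}}{2^{2k}(k!)^2}\cdot\frac{\nu^{2k+2}}{2k+2}\ =\ \frac{9\nu^2}{32}\cdot\frac{(\nu R)^{2k}}{(k+1)\,2^{2k}(k!)^2}.
$$
Finally, for the comparison step, since $\nu R<1$ and $0\le k\le N$ we have $(\nu R)^{2k}\ge(\nu R)^{2N}$, while $k+1\le N+1$, $2^{2k}\le 2^{2N}$ and $(k!)^2\le(N!)^2$, so replacing $k$ by $N$ only enlarges the denominator; hence
$$
\frac{(\nu R)^{2k}}{(k+1)\,2^{2k}(k!)^2}\ \ge\ \frac{(\nu R)^{2N}}{(N+1)\,2^{2N}(N!)^2},
$$
and combining the two displays gives the claim. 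There is no serious obstacle here; the only points requiring care are verifying that $rR<1$ throughout the range of integration (so that Lemma \ref{bessel1} is legitimately applicable and $J_k(rR)$ stays strictly positive, justifying the squaring), and checking that each replacement $k\rightsquigarrow N$ indeed weakens the bound.
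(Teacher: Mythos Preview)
Your proof is correct and follows essentially the same route as the paper's: apply Lemma~\ref{bessel1} pointwise on $[0,\nu]$ (where $rR<1$), bound the factor $1-\tfrac{(rR)^2}{4(k+1)}$ below by $\tfrac34$, square, integrate $r^{2k+1}$, and then pass from $k$ to $N$ by monotonicity. The only cosmetic difference is that the paper carries the parenthesis through the integral and bounds it afterwards via $(1-\tfrac{(\nu R)^2}{4})^2\ge\tfrac{9}{16}$, whereas you bound it first; the resulting inequality is identical.
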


\begin{proof}

Let  $R>0$ and $0 \leq r \leq \nu < 1/R$. For all $k \geq 0$, since $ 0 \leq rR < 1$, we have from lemma \ref{bessel1},
$$ J_k(rR) \geq \frac{(rR)^{k}}{2^k k!}(1 - \frac{(rR)^{2}}{4 (k+1)}),$$
and
$$ r J_k(rR)^2 \geq \frac{r(rR)^{2k}}{2^{2k} (k!)^2}(1 - \frac{(rR^{\star})^{2}}{4(k+1)})^2.$$
Then,
$$ \int_{0}^{\nu} r J_k(rR)^2 dr \geq \int_{0}^{\nu} \frac{r(rR)^{2k}}{2^{2k} (k!)^2}(1 - \frac{(rR)^{2}}{4 (k+1)})^2 dr \geq \frac{R^{2N}}{(2N+2) 2^{2N} (N!)^2}(1 - \frac{(\nu R)^2}{4})^2 \nu^{2N+2} .$$
To conclude the proof, we use that $ \nu < 1/R$, so that $(1-\frac{(\nu R)^2}{4})^2 \geq \frac{9}{16}$, which gives the result.

\end{proof}

\subsection*{Acknowledgements}

Jérémie Capitao-Miniconi would like to thank the IA Chair BisCottE (ANR-19-CHIA-0021-01), Elisabeth Gassiat would like to thank Institut Universitaire de France for supporting this project.

\bibliography{bib}
\end{document}